\documentclass[12pt]{article}

\usepackage[a4paper,margin=1in]{geometry}
\usepackage{amsmath,amssymb,amsthm,mathtools}
\usepackage{bm}
\usepackage{enumitem}
\usepackage{cite}
\usepackage[hidelinks]{hyperref}
\usepackage{microtype}

\numberwithin{equation}{section}

\newtheorem{theorem}{Theorem}[section]
\newtheorem{proposition}[theorem]{Proposition}
\newtheorem{lemma}[theorem]{Lemma}
\newtheorem{corollary}[theorem]{Corollary}
\newtheorem{remark}[theorem]{Remark}

\newcommand{\Kfluid}{\mathcal K}

\newcommand{\T}{\mathbb T_{2L}}

\newcommand{\dd}{\,\mathrm d}
\newcommand{\avg}[1]{\left\langle #1\right\rangle}
\newcommand{\Etot}{\mathcal H}
\newcommand{\Eflex}{\mathcal E_{\mathrm{fl}}}
\newcommand{\Bflex}{\mathcal B}
\newcommand{\R}{\mathbb R}
\newcommand{\psit}{\widetilde\psi}
\newcommand{\Pt}{\widetilde P_{\mathrm{ext}}}

\title{Stabilization of  hydroelastic waves in deep water }
\author{Jian Li$^{a}$,~~~Shaojie Yang$^{b}$%
\thanks{Corresponding authors: jianli\_jakura@163.com (Jian Li);
shaojieyang@kust.edu.cn (Shaojie Yang)}\\~\\[1ex]
\small $^a$ Institute for Advanced Study, Shenzhen University,\\
\small Shenzhen, Guangdong 518060, China\\~\\[1ex]
\small $^b$ Department of Mathematics, Faculty of Science, Kunming University of Science and Technology,\\
\small Kunming, Yunnan 650500, China}
\date{}

\begin{document}
\maketitle

\begin{abstract}
We consider the stabilization of two-dimensional periodic hydroelastic waves in deep water beneath a massless nonlinear Cosserat sheet by means of a localized pressure feedback. For sufficiently regular solutions, under an explicit condition relating gravity, flexural rigidity and the localization profile, we prove a uniform spacetime estimate and a localized dissipation observability inequality. As a consequence, the hydroelastic Hamiltonian decays quantitatively as long as the solution remains in a small geometric regime. The proof combines an energy dissipation identity with a nonlinear multiplier method adapted to the infinite-depth geometry, together with a coercivity analysis of the Cosserat bending energy.\\

\noindent\emph{Keywords}: Hydroelastic waves; Stabilization; Multiplier method.\\

\noindent\emph{Mathematics Subject Classification}: 35Q93; 74F10; 35B30.


\end{abstract}

\clearpage
\setcounter{tocdepth}{2}
\tableofcontents
\clearpage

\section{Introduction}\label{sec:introduction}

Hydroelastic water waves describe the interaction between a fluid free
surface and a deformable elastic structure.  Such interactions arise in polar
environments, where floating ice covers are affected by ocean waves and moving
loads and may also be used as roads or aircraft runways; related problems
include the interaction of large-amplitude waves with compliant sea-ice floes
\cite{KorobkinParauVandenBroeck,Squire1996,Marko2003,HegartySquire2008}.
Several models arise according to how the elastic layer is idealized.  If the
inertia of the layer is retained, one obtains a genuine fluid--plate system;
in the massless setting the elastic response enters instead through the
surface energy.  We work with the latter model, with the bending energy given
by the nonlinear Cosserat curvature functional.  Thus the fluid is governed
by the irrotational Euler equations, while the restoring force on the
interface contains a nonlinear fourth-order geometric term.  Our purpose is
to understand whether this system can be stabilized by a pressure feedback
acting only on a fixed portion of the surface.

The fluid occupies an infinite-depth periodic domain and the interface is
assumed to remain a graph.  We use the surface variables introduced by
Zakharov \cite{Zakharov1968}: the elevation \(\eta\) and the trace \(\psi\) of
the velocity potential.  In terms of the Dirichlet--Neumann operator
\(G(\eta)\), the equations take the form
\begin{equation}\label{eq:intro-system}
\begin{aligned}
  \eta_t
  &=G(\eta)\psi,\\
  \psi_t+g\eta+N(\eta)\psi+\beta\Bflex(\eta)
  &=-P_{\mathrm{ext}}.
\end{aligned}
\end{equation}
Here \(P_{\mathrm{ext}}=P_{\mathrm{ext}}(t,x)\) denotes the exterior pressure
acting on the hydroelastic interface.  The Cosserat restoring force is given in graph coordinates by
\begin{equation*}
  \Bflex(\eta)
  =\frac{\partial^2}{\partial x^2}
  \left(
    \frac{\eta_{xx}}{(1+\eta_x^2)^{5/2}}
  \right)
  +\frac{5}{2}\frac{\partial}{\partial x}\left(
    \frac{\eta_x\eta_{xx}^2}{(1+\eta_x^2)^{7/2}}
  \right).
\end{equation*}  The Hamiltonian formulation of the water-wave
system goes back to Zakharov, while the Dirichlet--Neumann formulation was
developed by Craig and Sulem \cite{CraigSulem1993}; see also
\cite{LannesBook}.  The existence and structure of nonlinear hydroelastic travelling waves have
been investigated in various settings. In particular, Toland
\cite{Toland2007} established the existence of heavy hydroelastic travelling
waves by a variational approach, while the nonlinear curvature formulation was
further developed in the geometric works of Plotnikov and Toland
\cite{Toland2008,PlotnikovToland}. Fully nonlinear deep-water computations were
also carried out by Guyenne and Părău \cite{GuyenneParauDeep}.  In Ref.~\cite{r1}, we derived the energy relations for hydroelastic waves and studied their associated properties.  The point relevant here is that the elastic term is
not a lower-order perturbation of the gravity-wave equation: after localization
it generates fourth-order contributions whose sign is not apparent from the
usual water-wave multipliers.

The Cauchy theory for hydroelastic waves has been developed in a number of
settings.  We refer to \cite{AmbroseSiegel,LiuAmbrose,WangYangLWP} for local
well-posedness results in Sobolev spaces, and to
\cite{YangCubic,WanYangLowReg} for more recent developments in two dimensions.
For the periodic Cosserat model with exterior pressure, including the infinite-depth case, we refer to  \cite{WanYangControl} for a related functional framework.  In this paper, the stabilization estimates are established for sufficiently regular solutions. The available local Cauchy theory provides the natural setting for the evolution problem.

The control and stabilization of water waves has a longer history.  For linearized surface waves, boundary control and stabilizability were studied by Reid and Russell, and related control problems were subsequently considered for irregular domains and for gravity--capillary waves
\cite{ReidRussell1985,Reid1986,Reid1995}.  For the nonlinear Euler free boundary
problem, Alazard proved boundary observability for gravity waves and developed
localized pressure stabilization for gravity and gravity--capillary waves
\cite{AlazardBoundary,AlazardGravity,AlazardCapillary}.  Local exact
controllability for periodic gravity--capillary waves was obtained by Alazard,
Baldi and Han-Kwan \cite{AlazardControl}.  The global-in-time Morawetz estimate
of Alazard, Ifrim and Tataru provides a complementary nonlinear local-energy
decay mechanism, in both finite and infinite depth \cite{AlazardIfrimTataru}.
These works show how localized information at the free surface can be converted
into quantitative control of the fluid energy, through multiplier, Pohozaev and
dispersive arguments.  We also mention the recent progress on the control of
three-dimensional water waves.  In particular, Zhu developed a control theory
for three-dimensional water waves in the framework of rational mechanics,
which provides an important reference for extending boundary control ideas
beyond the two-dimensional setting \cite{Zhu3DControl}.  The observability,
control and stabilization problems for wave equations have a long history in
the PDE control literature; we refer to the general survey of Zuazua and the
sharp boundary stabilization results of Bardos, Lebeau and Rauch
\cite{ZuazuaControl,BardosLebeauRauch1992}.  These developments motivate the
study of analogous stabilization mechanisms for nonlinear free boundary
systems.

For hydroelastic waves the control theory is much more recent.  Wan and Yang
proved exact controllability for two-dimensional periodic waves in finite and
infinite depth, their result concerns a control chosen to
steer the solution between prescribed states \cite{WanYangControl}.  Here the pressure is fixed by
the state through \eqref{eq:intro-feedback}, and the problem is to determine whether the resulting localized dissipation controls the full nonlinear hydroelastic Hamiltonian.  This stabilization mechanism is also distinct from the spectral stability of unforced travelling hydroelastic waves studied in \cite{BlythParauWang}.

Let \(m\) be the localized dilation function introduced in
Lemma~\ref{lem:cutoff}, and set \(\chi=1-m_x\).  The function \(\chi\) is
nonnegative and supported near the edge of the symmetry cell.  We impose the
feedback
\begin{equation}\label{eq:intro-feedback}
  P_{\mathrm{ext}}=\lambda\chi\eta_t,
  \qquad \lambda>0.
\end{equation}
The corresponding Hamiltonian is
\begin{equation*}
  \mathcal H(\eta,\psi)
  =
  \frac12\int_{\T}\psi G(\eta)\psi\,\mathrm dx
  +\frac g2\int_{\T}\eta^2\,\mathrm dx
  +\frac\beta2\int_{\T}
  \frac{\eta_{xx}^2}{(1+\eta_x^2)^{5/2}}\,\mathrm dx,
\end{equation*}
the feedback law \eqref{eq:intro-feedback} gives the exact
identity
\begin{equation}\label{eq:intro-dissipation}
  \frac{\mathrm d}{\mathrm dt}\mathcal H(t)
  =-\lambda\int_{\T}\chi|\eta_t|^2\,\mathrm dx.
\end{equation}
Thus the feedback gives an exact dissipation law, but the dissipation term contains
only the surface velocity on the set where \(\chi\) is supported.  Stabilization
therefore requires a quantitative recovery of the full Hamiltonian from this
localized information.  For gravity waves, such a recovery can be obtained by
combining a localized spatial multiplier with a Pohozaev identity.  The
hydroelastic problem has an additional difficulty at the highest differential
order: localization interacts directly with the fourth-order Cosserat force.
After integration by parts, derivatives of the localization profile enter the
principal elastic quadratic form, producing terms of the same order as the
bending energy rather than lower-order perturbations.  The gravity-wave
multiplier therefore cannot simply be reused; its elastic component has to be
chosen so that the fourth-order contribution remains coercive.  This is the
role of the multiplier introduced below.

The main result is the following uniform stabilization estimate.

\begin{theorem}
\label{thm:stabilization}
Fix \(g,\beta,L,\lambda>0\), let \(m\) be the localization function introduced
above and constructed in Lemma~\ref{lem:cutoff}, and set
\[
  \chi=1-m_x,
  \qquad
  M_3=\|m_{xxx}\|_{L^\infty(\T)}.
\]
Assume
\begin{equation}\label{eq:strict-parameter-gap}
  g>\frac{25}{9}\,\beta M_3^2,
\end{equation}
then there exist constants
\[
  \delta_*=\delta_*(g,\beta,L,m)>0,
  \qquad
  C=C(g,\beta,L,\lambda,m)>0,
\]
with the following property.  Let \(T>0\), and let \((\eta,\psi)\) be a
real-valued, even and \(2L\)-periodic solution of \eqref{eq:intro-system} on
\([0,T]\), with \(P_{\mathrm{ext}}\) given by \eqref{eq:intro-feedback}.
Let \(\phi\) be the associated finite-energy harmonic extension of \(\psi\),
and set
\[
  \mathcal S=\T\times(-\infty,0),
  \qquad
  u(t,x,z)=\phi(t,x,z+\eta(t,x)).
\]
Assume that, for some \(s\geq5\),
\begin{equation}\label{eq:theorem-regularity}
\begin{aligned}
  \eta
  &\in C\bigl([0,T];H^{s+3/2}(\T)\bigr)
  \cap C^1\bigl([0,T];H^{s-1}(\T)\bigr),\\
  \psi
  &\in C\bigl([0,T];H^s(\T)\bigr)
  \cap C^1\bigl([0,T];H^{s-5/2}(\T)\bigr),\\
  \nabla_{x,z}u
  &\in C\bigl([0,T];H^{s-1}(\mathcal S)\bigr).
\end{aligned}
\end{equation}
If
\begin{equation}\label{eq:localized-geometric-smallness}
  \sup_{0\leq t\leq T}
  \left(
    \|\eta_x(t)\|_{L^\infty(\T)}
    +\|\chi\eta_{xx}(t)\|_{L^\infty(\T)}
  \right)
  \leq\delta_*,
\end{equation}
then
\begin{equation}\label{eq:integrated-energy-estimate}
  \int_0^T\mathcal H(t)\,\mathrm dt
  \leq C\mathcal H(0),
\end{equation}
and consequently
\begin{equation}\label{eq:one-step-decay}
  \mathcal H(T)
  \leq\frac{C}{T}\mathcal H(0).
\end{equation}
The constants \(\delta_*\) and \(C\) are independent of the particular
solution and of \(T\).
\end{theorem}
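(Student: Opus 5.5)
The plan is a multiplier (Pohozaev-type) argument of the kind Alazard used for gravity and gravity--capillary waves, adapted to the fourth-order Cosserat term. The time-integrated identity is closed with the exact dissipation law \eqref{eq:intro-dissipation}.

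\textbf{Step 1: multiplier identity.} I would introduce the functional
\[
  \mathcal M(t)=\int_{\T}\eta\,\psi\,\mathrm dx
  +\int_{\T} m\,\eta\,\psi_x\,\mathrm dx ,
\]
or its variant written with the fluid variable \(u\) in the flattened strip \(\mathcal S\). I would differentiate \(\mathcal M\) in time using \eqref{eq:intro-system}, and express \(\int m\,\eta_t\psi_x\) and \(\int m\,\eta\,(N(\eta)\psi)_x\) through the Rellich/Pohozaev identity for the harmonic extension \(\phi\) in infinite depth. The evenness and \(2L\)-periodicity make \(m\) (from Lemma~\ref{lem:cutoff}) a dilation-like function with \(m_x=1-\chi\), and they kill boundary terms at the edge of the symmetry cell. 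The aim is an identity of the form
\[
  \frac{\mathrm d}{\mathrm dt}\mathcal M
  = -c_0\,\mathcal H + \mathcal Q_{\chi} + \mathcal R ,
\]
where \(\mathcal Q_\chi\) collects terms supported where \(\chi\neq0\), or terms involving \(\chi\eta_t\) and \(P_{\mathrm{ext}}\). The remainder \(\mathcal R\) is cubic, with coefficients controlled by \(\|\eta_x\|_{L^\infty}\) and \(\|\chi\eta_{xx}\|_{L^\infty}\). The gravity and kinetic parts are handled as in Alazard's papers. The \(\psi G(\eta)\psi\) contribution in deep water is recovered from \(\iint_{\mathcal S}|\nabla u|^2\) via the \(z\)-direction Pohozaev term, using finite energy at \(z\to-\infty\).

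\textbf{Step 2 (main obstacle): the elastic term.} Pairing \(\beta\Bflex(\eta)\) against \(\eta+m\eta_x\) and integrating by parts gives the bending density \(\tfrac12\eta_{xx}^2(1+\eta_x^2)^{-5/2}\) with a good coefficient, plus terms carrying \(m_{xx}\eta_x\eta_{xx}\) and \(m_{xxx}\eta\eta_{xx}\)-type products. These last products are of principal order and are not small. I would first linearize in the regime \(|\eta_x|\le\delta_*\), which turns the nonlinear weights \((1+\eta_x^2)^{-5/2}\) and the \(\tfrac52\) term into \(O(\delta_*)\) multiples of the energy. The bad quadratic terms I would treat with Young's inequality:
\[
  \beta|m_{xxx}\eta\,\eta_{xx}|\le \theta\beta\eta_{xx}^2+\frac{\beta M_3^2}{4\theta}\eta^2 .
\]
The \(\eta^2\) term is then absorbed by the gravity energy \(\tfrac g2\int\eta^2\). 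The constant \(\tfrac{25}{9}\) should come out of optimizing \(\theta\) against the precise coefficients in front of \(\eta_{xx}^2\) and of \(m_{xx}\eta_x^2\)-type terms, which are controlled by an interpolation step. This explains the hypothesis \eqref{eq:strict-parameter-gap}. The term with \(\chi\eta_{xx}\) appearing nonlinearly is why smallness of \(\chi\eta_{xx}\) is assumed.

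\textbf{Step 3: closing.} Integrating the identity on \([0,T]\) gives
\[
  c\int_0^T\mathcal H\,\mathrm dt \le |\mathcal M(T)|+|\mathcal M(0)|+\int_0^T|\mathcal Q_\chi|\,\mathrm dt + C\delta_*\int_0^T\mathcal H\,\mathrm dt .
\]
For the boundary terms, I would bound \(|\mathcal M|\lesssim \mathcal H\) using \(\|\psi_x\|\) controlled through \(G(\eta)\) in the small-slope regime, together with \(g\|\eta\|^2\). Since \(\mathcal H\) is nonincreasing, these are at most \(C\mathcal H(0)\). The localized terms \(\mathcal Q_\chi\) are bounded by \(C_\epsilon\int\chi|\eta_t|^2+\epsilon\mathcal H\), where \(\int\chi\eta_t^2\) comes from the equation and the feedback term \(\lambda\chi\eta_t\). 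By \eqref{eq:intro-dissipation}, \(\int_0^T\!\int\chi\eta_t^2\le\mathcal H(0)/\lambda\). Any localized \(\psi\)-terms are treated via a second localized multiplier \(\int\chi\eta\psi\), which converts them into \(\chi\eta_t\) plus absorbable terms. Choosing \(\delta_*\) and \(\epsilon\) small gives \eqref{eq:integrated-energy-estimate}. Then \eqref{eq:one-step-decay} follows from \(T\mathcal H(T)\le\int_0^T\mathcal H\) by monotonicity.
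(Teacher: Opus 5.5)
You have a genuine gap in Step~2, and it is the difficulty the paper is built around. Pairing the Cosserat force with a localized dilation multiplier does not leave the bending density with a fixed positive weight plus only $m_{xx}\eta_x\eta_{xx}$ and $m_{xxx}\eta\eta_{xx}$ errors. Already at the flat state, $\int_{\T}\eta_{xxxx}\,m\eta_x\,\dd x=\frac32\int_{\T}m_x\eta_{xx}^2\,\dd x+\int_{\T}m_{xx}\eta_x\eta_{xx}\,\dd x$. So the weight of $\eta_{xx}^2$ is $1+\frac32m_x$ for your pairing with $\eta+m\eta_x$, and $\frac52m_x-c$ if you actually differentiate $\int m\eta\psi_x+c\int\eta\psi$. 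Since $m(L-\delta)=L-\delta$ and $m(L)=0$, the mean value theorem gives $m_x=1-\chi\le-(L-\delta)/\delta$ somewhere in the damping zone. Both weights are negative there: the second one always, the first as soon as $\delta<\frac35L$.

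This term has the same order as the bending energy and survives at the flat state. Smallness of $\delta_*$ therefore does not help, Young's inequality cannot push it to lower order, and the dissipation $\int_0^T\!\int\chi\eta_t^2$ does not control $\int\chi\eta_{xx}^2$. Conversely, the $m_{xx}$ and $m_{xxx}$ products you call principal are lower order: after one integration by parts they are bounded by $M_3\|\eta\|_{L^2}\|\eta_{xx}\|_{L^2}$.

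Your $\mathcal M$ also fails on the gravity side. In the flat interior, $\frac{d}{dt}\mathcal M\approx\int\psi G(0)\psi+\Sigma_\infty-\frac g2\int\eta^2+\frac32\beta\int\eta_{xx}^2$, so potential energy enters with the opposite sign to kinetic and bending energy. The weight on $\int\eta\psi$ must lie in $(0,\frac12)$; the paper uses $\frac14$.

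The missing idea is a $\chi$-weighted equipartition correction with one specific weight. The paper adds multiples of the identity $\frac{d}{dt}\int\chi\eta\psi=\int\chi[\psi G(\eta)\psi-g\eta^2-\eta N(\eta)\psi-\eta F]$, with $F=P_{\mathrm{ext}}+\beta\Bflex(\eta)$ and total weight $\frac52$. The elastic multiplier then becomes $\zeta=\partial_x(m\eta)+\frac52\chi\eta-\frac14\eta=m\eta_x+(\frac94-\frac32m_x)\eta$. By Lemma~\ref{lem:unique-flex-multiplier}, $r=\frac52$ is the only member of the family $\zeta_r$ whose $\eta_{xx}^2$ weight $r-\frac14+(\frac52-r)m_x$ is independent of $m_x$, and that weight equals $\frac94$. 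The remaining errors $-2\int m_{xx}\eta_x\eta_{xx}-\frac32\int m_{xxx}\eta\eta_{xx}$ are bounded by $\frac52M_3\|\eta\|\|\eta_{xx}\|$. Splitting this against $\frac94\beta\|\eta_{xx}\|^2$ and $\frac g4\|\eta\|^2$ is what produces $g>\frac{25}{9}\beta M_3^2$, so that constant cannot come out of your multiplier.

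The correction has costs you would also have to pay.
\begin{itemize}
\item The term $2g\int\chi\eta^2$ goes to the left with a good sign.
\item The observation term becomes $\frac52\int\chi\psi G(\eta)\psi$, whose part $\avg{\psi}\int\chi G(\eta)\psi$ is not controlled by the energy. The paper removes it using $\frac{d}{dt}\avg{\psi}=-\avg{P_{\mathrm{ext}}}$, then applies Lemma~\ref{lem:mean-zero-trace} to $\psit$.
\item The term $(x-m)\psi_xG(\eta)\psi$ needs the localized Rellich-type bound $\int\chi\psi_x^2\le8D+8\|\chi_x\|_{L^\infty}\mathcal H$ of Lemma~\ref{lem:trace}, with $D=\int\chi(G(\eta)\psi)^2$. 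The energy controls $\psi$ only in $\dot H^{1/2}$, so $\|\psi_x\|$ is not ``controlled through $G(\eta)$'' as your Step~3 assumes.
\end{itemize}
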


In what follows, all solutions are understood to satisfy
\eqref{eq:theorem-regularity} on the time interval under consideration.

The smallness condition is global for the slope, while the second derivative
enters only through the localized quantity \(\chi\eta_{xx}\).  Combining
\eqref{eq:one-step-decay} with the exact dissipation identity
\eqref{eq:intro-dissipation} gives a localized observability estimate.

\begin{corollary}
\label{cor:dissipation-observability}
Under the assumptions of Theorem~\ref{thm:stabilization}, if \(T>C\), then
\begin{equation}\label{eq:dissipation-observability}
  \mathcal H(0)
  \leq
  \frac{\lambda}{1-C/T}
  \int_0^T\int_{\T}
  \chi(x)|\eta_t(t,x)|^2\,\mathrm dx\,\mathrm dt.
\end{equation}
In particular, if \(T\geq2C\), then
\begin{equation}\label{eq:dissipation-observability-2C}
  \mathcal H(0)
  \leq
  2\lambda
  \int_0^T\int_{\T}
  \chi(x)|\eta_t(t,x)|^2\,\mathrm dx\,\mathrm dt.
\end{equation}
\end{corollary}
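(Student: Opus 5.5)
The corollary follows from Theorem~\ref{thm:stabilization} and the dissipation identity \eqref{eq:intro-dissipation}; no further analysis is needed. First, integrate \eqref{eq:intro-dissipation} over \([0,T]\). Since the solution has the regularity \eqref{eq:theorem-regularity}, \(\mathcal H\) is \(C^1\) in time and the identity holds pointwise. This gives
\[
  \mathcal H(0)-\mathcal H(T)
  =\lambda\int_0^T\int_{\T}\chi|\eta_t|^2\,\mathrm dx\,\mathrm dt .
\]
In particular \(\mathcal H\) is nonincreasing, since \(\chi\ge0\) and \(\lambda>0\).

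Next, use the one-step decay \eqref{eq:one-step-decay}, \(\mathcal H(T)\le \frac{C}{T}\mathcal H(0)\). Recall how it is obtained: by monotonicity, \(T\mathcal H(T)\le\int_0^T\mathcal H\,\mathrm dt\le C\mathcal H(0)\) by \eqref{eq:integrated-energy-estimate}. Substituting into the identity above gives
\[
  \Bigl(1-\frac CT\Bigr)\mathcal H(0)
  \le \mathcal H(0)-\mathcal H(T)
  =\lambda\int_0^T\int_{\T}\chi|\eta_t|^2\,\mathrm dx\,\mathrm dt .
\]
When \(T>C\) the factor \(1-C/T\) is positive, and dividing by it yields \eqref{eq:dissipation-observability}. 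For \(T\ge 2C\) we have \(1-C/T\ge \tfrac12\), so \(\lambda/(1-C/T)\le 2\lambda\), which gives \eqref{eq:dissipation-observability-2C}.

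The only point to check is that the hypotheses of Theorem~\ref{thm:stabilization} are in force on the whole interval \([0,T]\). These are the parameter gap, the regularity, and the smallness \eqref{eq:localized-geometric-smallness}, and the corollary assumes exactly these. The constant \(C\) is independent of \(T\), so the condition \(T>C\) is meaningful. The energy \(\mathcal H(0)\) is nonnegative, because \(G(\eta)\) is a nonnegative operator and the gravity and bending terms are nonnegative, so no sign issue arises when dividing.
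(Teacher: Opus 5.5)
Your proof is correct and is the same argument the paper gives. You integrate the exact dissipation identity on $[0,T]$, insert the one-step decay bound $\mathcal H(T)\le \frac{C}{T}\mathcal H(0)$, and divide by the positive factor $1-C/T$, using $1-C/T\ge\tfrac12$ when $T\ge 2C$.
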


Since the constants in Theorem~\ref{thm:stabilization} are uniform within the
same geometric regime, the one-step estimate can be iterated along global
solutions that remain in that regime.

\begin{corollary}
\label{cor:optimized-exponential}
Assume the hypotheses and notation of Theorem~\ref{thm:stabilization}, and let
\((\eta,\psi)\) be a global solution of \eqref{eq:intro-system}, with
\(P_{\mathrm{ext}}\) given by \eqref{eq:intro-feedback}, whose restriction to
every finite time interval satisfies the regularity assumptions of
Theorem~\ref{thm:stabilization}.  If
\begin{equation}\label{eq:global-localized-geometric-smallness}
  \sup_{t\geq0}
  \left(
    \|\eta_x(t)\|_{L^\infty(\T)}
    +\|\chi\eta_{xx}(t)\|_{L^\infty(\T)}
  \right)
  \leq\delta_*,
\end{equation}
then
\begin{equation}\label{eq:continuous-exponential-decay}
  \mathcal H(t)
  \leq
  \exp\!\left(1-\frac{t}{\mathrm e C}\right)\mathcal H(0),
  \qquad t\geq0.
\end{equation}
More generally, for every \(\tau>C\) gives
\begin{equation}\label{eq:general-exponential-family}
  \mathcal H(t)
  \leq
  \frac{\tau}{C}
  \exp\!\left(-\frac{\log(\tau/C)}{\tau}\,t\right)\mathcal H(0),
  \qquad t\geq0.
\end{equation}
Within this one-step iteration family, the exponential rate
\(\log(\tau/C)/\tau\) is maximal at \(\tau=\mathrm e C\).
\end{corollary}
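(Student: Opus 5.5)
The plan is to iterate the one-step decay estimate \eqref{eq:one-step-decay} from Theorem~\ref{thm:stabilization}, using two facts. First, $\mathcal H$ is nonincreasing in time, which follows from the dissipation identity \eqref{eq:intro-dissipation}. Second, the hypotheses of the theorem are invariant under time translation. Because the system \eqref{eq:intro-system} with feedback \eqref{eq:intro-feedback} is autonomous, for each $t_0\geq0$ the shifted pair $(\eta,\psi)(t_0+\cdot)$ is again an even, real, $2L$-periodic solution on $[0,\tau]$. It has the required regularity \eqref{eq:theorem-regularity}, and by \eqref{eq:global-localized-geometric-smallness} it satisfies \eqref{eq:localized-geometric-smallness} with the same $\delta_*$. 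Since $\delta_*$ and $C$ do not depend on the solution or on $T$, the theorem applied on $[t_0,t_0+\tau]$ gives
\[
  \mathcal H(t_0+\tau)\leq \frac{C}{\tau}\,\mathcal H(t_0).
\]

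Fix $\tau>C$ and set $q=C/\tau\in(0,1)$. Induction on $n$ gives $\mathcal H(n\tau)\leq q^n\mathcal H(0)$. For an arbitrary $t\geq0$, write $t=n\tau+r$ with $n=\lfloor t/\tau\rfloor$ and $0\leq r<\tau$. Monotonicity then gives
\[
  \mathcal H(t)\leq\mathcal H(n\tau)\leq q^{n}\mathcal H(0).
\]
Since $n>t/\tau-1$ and $q<1$, we have $q^n\leq q^{t/\tau-1}=\frac{\tau}{C}\exp\!\bigl(-\frac{\log(\tau/C)}{\tau}t\bigr)$. This is exactly \eqref{eq:general-exponential-family}.

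To identify the best rate in this family, define $f(\tau)=\log(\tau/C)/\tau$ for $\tau>C$. Its derivative is $f'(\tau)=(1-\log(\tau/C))/\tau^2$. This is positive for $\tau<\mathrm eC$ and negative for $\tau>\mathrm eC$, so $f$ has its unique maximum at $\tau=\mathrm eC$, where $f(\mathrm eC)=1/(\mathrm eC)$. At that value the prefactor $\tau/C$ equals $\mathrm e$, and substituting into \eqref{eq:general-exponential-family} yields \eqref{eq:continuous-exponential-decay}.

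There is no real analytic obstacle here; the only point needing care is the time translation. One has to check explicitly that the constants of Theorem~\ref{thm:stabilization} depend only on $(g,\beta,L,\lambda,m)$, which the theorem asserts. One also has to check that the smallness and regularity hypotheses on the shifted interval follow from the global assumptions, which holds because the restriction to every finite interval satisfies \eqref{eq:theorem-regularity}.
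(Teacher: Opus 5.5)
Your proposal is correct and follows essentially the same route as the paper. You apply Theorem~\ref{thm:stabilization} to the time-shifted solution on intervals of length $\tau$, iterate to get $\mathcal H(n\tau)\le (C/\tau)^n\mathcal H(0)$, pass to arbitrary $t$ via $n=\lfloor t/\tau\rfloor$ and the monotonicity of $\mathcal H$, and maximize $\log(\tau/C)/\tau$ at $\tau=\mathrm e C$. You also state explicitly the step $\mathcal H(t)\le\mathcal H(n\tau)$, which the paper uses tacitly. The one other standing assumption worth mentioning when you shift time is the zero-mean normalization of $\eta$, and it is preserved by the conservation law in Lemma~\ref{lem:zero-mode}.
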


The main difficulty created by elasticity occurs at the highest differential
order.  To see the mechanism, consider at the flat state the one-parameter
family
\[
  \zeta_r=\partial_x(m\eta)+r(1-m_x)\eta-\frac14\eta.
\]
The principal part of the elastic pairing is
\begin{equation}\label{eq:intro-principal-elastic-pairing}
  \int_{\T}
  \left[
    r-\frac14+\left(\frac52-r\right)m_x
  \right]\eta_{xx}^2\,\mathrm dx,
\end{equation}
up to terms involving derivatives of the localization profile.  Thus the
principal fourth-order coefficient depends on the damping profile for every
\(r\neq5/2\).  The choice \(r=5/2\) is the unique one for which this
dependence disappears, and \eqref{eq:intro-principal-elastic-pairing} reduces
to the fixed positive coefficient \(9/4\) in front of
\(\|\eta_{xx}\|_{L^2}^2\).  The nonlinear flexural virial identity proved
below retains this coercive structure and isolates the remaining localization
errors at lower order.  After combining these terms with the gravity and
bending energies, an optimized two-parameter Young splitting gives the factor
\(25/9\) in \eqref{eq:strict-parameter-gap}.  The value \(25/9\) is the
sharp constant for this coercivity splitting in the flat-interface limit.

The fluid part of the argument also has to be adjusted to infinite depth.  We
combine the localized multiplier identity with a Pohozaev formula and a
weighted stress identity on truncated fluid domains.  The harmonic extension
of a periodic finite-energy trace decays below the graph, so the lower boundary
terms vanish as the truncation is sent to infinity.  In this way the artificial lower boundary leaves no contribution in the infinite-depth limit, while the remaining vertical trace on
the boundary of the even symmetry cell has a favorable sign.  These identities
are then coupled to the elastic virial estimate and the exact dissipation law.
The argument is carried out directly at the nonlinear level and does not rely on a linearized observability estimate.

The argument proceeds directly in physical space and does not require a spectral decomposition.  The geometric assumption \eqref{eq:localized-geometric-smallness} requires global smallness of the slope, while the second derivative enters only through the localized quantity \(\chi\eta_{xx}\).  Such
distinction is important because the physical Hamiltonian controls the surface
at the \(H^2\)-level but does not control \(\eta_{xx}\) in \(L^\infty\).  The constant \(C\) in \eqref{eq:integrated-energy-estimate} is independent of the solution and of the length of the time interval, as long as \eqref{eq:localized-geometric-smallness} holds. The uniformity allows the estimate to be iterated on successive time intervals.

The paper is organized as follows.  Section~\ref{sec:model} gives the
free boundary formulation and the Hamiltonian structure of the Cosserat model.
The localized feedback is introduced in Section~\ref{sec:feedback}.
Section~\ref{sec:flex-virial} contains the nonlinear elastic virial identity
and the coercivity estimate.  The fluid multiplier, Pohozaev and weighted stress
identities are proved in Section~\ref{sec:fluid-identities}, and the remaining
error terms are estimated in Section~\ref{sec:estimates}.  The proof of Theorem~\ref{thm:stabilization} and its observability and decay
corollaries is completed in Section~\ref{sec:main-theorem}.
Finally, Section~\ref{sec:linear-check} discusses the linearized system and
proves the corresponding stabilization estimate.

\section{Hydroelastic waves and Hamiltonian formulation}
\label{sec:model}
In this section, we introduce the hydroelastic waves model and recall its Hamiltonian formulation. We  derive the free boundary problem and then describe the corresponding energy structure that will be used throughout the stabilization analysis.
\subsection{The free boundary problem}

We work in a two-dimensional Cartesian coordinate system \((x,y)\), where
\(x\) is the horizontal coordinate and \(y\) is the vertical coordinate,
with gravity acting in the negative \(y\)-direction.  Fix \(L>0\) and identify
\(\T=\R/(2L\mathbb Z)\) with the representative interval \([-L,L]\).  The
unknown \(\eta=\eta(t,x)\) denotes the vertical displacement of the
hydroelastic interface from the undisturbed level \(y=0\).  At time
\(t\), the elastic free surface is
\[
  \Gamma(t)=\{(x,\eta(t,x)):x\in\T\},
\]
and the fluid occupies the infinite-depth domain
\begin{equation*}
  \Omega(t)=\{(x,y)\in\T\times\R:\ y<\eta(t,x)\}.
\end{equation*}
We assume throughout that the interface remains a graph and impose evenness
in the horizontal variable.  Restricting an even periodic solution to
\([0,L]\) gives the symmetry conditions
\begin{equation*}
  \phi_x(t,0,y)=\phi_x(t,L,y)=0,
  \qquad
  \eta_x(t,0)=\eta_x(t,L)=0.
\end{equation*}
All odd spatial derivatives of a smooth even profile vanish at the endpoints,
in particular \(\eta_{xxx}=0\).  These are symmetry conditions, not separate
mechanical edge conditions.

We consider a massless elastic sheet governed by the nonlinear Cosserat
bending law.  More precisely, following the special Cosserat hydroelastic
model derived by Plotnikov and Toland \cite{PlotnikovToland}, the elastic
restoring force is written in graph coordinates as
\begin{equation}\label{eq:flex-force}
  \Bflex(\eta)
  =
  \frac{\partial^2}{\partial x^2}\left(
    \frac{\eta_{xx}}{(1+\eta_x^2)^{5/2}}
  \right)
  +\frac{5}{2}\frac{\partial}{\partial x}\left(
    \frac{\eta_x\eta_{xx}^2}{(1+\eta_x^2)^{7/2}}
  \right).
\end{equation}
This nonlinear force is the variational derivative of the Cosserat bending
energy proportional to the squared curvature integrated with respect to arc
length.  The variational relation is given in \eqref{eq:first-variation}.  For the
modelling background of this elastic law, see also
\cite{PlotnikovToland,Toland2008}.  The present model retains only this
bending response of the sheet: sheet inertia, prestress, and stretching energy
are not included.

The fluid beneath the elastic surface is incompressible, inviscid and
irrotational.  Hence its velocity is represented by a potential
\(\phi=\phi(t,x,y)\), with velocity field \(\nabla_{x,y}\phi\).  Throughout
the paper we work in the finite-energy class appropriate to the infinite-depth domain,
\begin{equation}\label{eq:finite-dirichlet-energy}
  \iint_{\Omega(t)}
  |\nabla_{x,y}\phi(t,x,y)|^2\,\dd x\dd y<\infty.
\end{equation}
The velocity potential satisfies
\begin{equation}\label{eq:deep-water-laplace}
  \begin{aligned}
    \Delta_{x,y}\phi&=0
    &&\text{in }\Omega(t),\\
    \nabla_{x,y}\phi(t,x,y)&\longrightarrow0
    &&\text{as }y\to-\infty.
  \end{aligned}
\end{equation}
The kinematic boundary condition expresses that the elastic free surface is
transported by the fluid:
\begin{equation*}
  \eta_t
  =\bigl(\phi_y-\eta_x\phi_x\bigr)\big|_{y=\eta}.
\end{equation*}
The dynamic boundary condition is
\begin{equation}\label{eq:dynamic-eulerian}
  \phi_t+\frac12(\phi_x^2+\phi_y^2)
  +g\eta+\beta\Bflex(\eta)
  =-P_{\mathrm{ext}}
  \qquad\text{on }y=\eta(t,x).
\end{equation}
Here \(g>0\) is the gravitational coefficient and \(\beta>0\) is the
flexural coefficient.  The nonlinear Cosserat restoring force
\(\Bflex(\eta)\) is defined in \eqref{eq:flex-force}, so the elastic
contribution in \eqref{eq:dynamic-eulerian} is \(\beta\Bflex(\eta)\).
The function \(P_{\mathrm{ext}}=P_{\mathrm{ext}}(t,x)\) denotes the exterior
pressure applied to the hydroelastic surface.  The localized feedback law for
\(P_{\mathrm{ext}}\) will be specified in Section~\ref{sec:feedback}.

We normalize the reference level of the periodic interface by imposing
\begin{equation}\label{eq:zero-mean-initial}
  \int_{\T}\eta(0,x)\,\dd x=0,
\end{equation}
which fixes the mean elevation of the graph.  Lemma~\ref{lem:zero-mode}
shows that the kinematic boundary condition preserves it in time.

The qualitative condition at infinite depth in
\eqref{eq:deep-water-laplace}, together with periodicity and finite Dirichlet
energy, yields a quantitative decay estimate below any horizontal line lying
strictly beneath the free surface.  The bounds \eqref{eq:deep-decay-zero}--\eqref{eq:deep-decay-derivatives}
will be used repeatedly to justify truncation of the infinite-depth domain and
the subsequent passage to the limit.

\begin{lemma}[Decay at infinite depth]
\label{lem:deep-decay}
Fix \(t\) and suppress the time variable from the notation.  Choose
\[
  y_0<\min_{x\in\T}\eta(x),
\]
and define
\begin{equation*}
  \phi_\infty
  :=
  \frac{1}{2L}\int_{\T}\phi(x,y_0)\,\dd x.
\end{equation*}
Then
\begin{equation}\label{eq:deep-decay-zero}
  \|\phi(\cdot,y)-\phi_\infty\|_{L^2(\T)}
  \leq
  C_0 \mathrm{e}^{\pi(y-y_0)/L},
  \qquad y\leq y_0.
\end{equation}
Moreover, for every \(a,b\in\mathbb N_0\) with \(a+b\geq1\), we have
\begin{equation}\label{eq:deep-decay-derivatives}
  \bigl\|
    \partial_x^a\partial_y^b\phi(\cdot,y)
  \bigr\|_{L^2(\T)}
  \leq
  C_{a,b}\mathrm{e}^{\pi(y-y_0)/L},
  \qquad y\leq y_0.
\end{equation}
\end{lemma}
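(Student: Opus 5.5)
The plan is to use a Fourier expansion in \(x\) on the half-plane \(\{y<y_0\}\), which lies entirely inside \(\Omega\) because \(y_0<\min\eta\).

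\emph{Step 1: Fourier representation.} Since \(\phi\) is harmonic and \(2L\)-periodic in \(x\), we expand
\[
  \phi(x,y)=\sum_{k\in\mathbb Z}\hat\phi_k(y)\,\mathrm e^{\mathrm i\pi kx/L},
\]
for \(y<y_0\). Each coefficient satisfies \(\hat\phi_k''=(\pi k/L)^2\hat\phi_k\). For \(k\neq0\) this gives
\[
  \hat\phi_k(y)=A_k\mathrm e^{\pi|k|y/L}+B_k\mathrm e^{-\pi|k|y/L},
\]
and for \(k=0\) it gives \(\hat\phi_0(y)=A_0+B_0y\).

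\emph{Step 2: ruling out growing modes.} We use the finite energy condition \eqref{eq:finite-dirichlet-energy}. By Parseval,
\[
  \int_{-\infty}^{y_0}\int_{\T}|\nabla\phi|^2\,\dd x\,\dd y
  =2L\sum_k\int_{-\infty}^{y_0}\Bigl(|\hat\phi_k'|^2+(\pi k/L)^2|\hat\phi_k|^2\Bigr)\,\dd y<\infty .
\]
For \(k\neq0\), the mode \(\mathrm e^{-\pi|k|y/L}\) blows up as \(y\to-\infty\), so its contribution would be infinite; hence \(B_k=0\). For \(k=0\), the term \(|\hat\phi_0'|^2=|B_0|^2\) is integrated over an infinite interval, so \(B_0=0\). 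The condition \(\nabla\phi\to0\) in \eqref{eq:deep-water-laplace} gives the same conclusion. Therefore
\[
  \hat\phi_k(y)=\hat\phi_k(y_0)\,\mathrm e^{\pi|k|(y-y_0)/L},
  \qquad
  \hat\phi_0\equiv\hat\phi_0(y_0)=\phi_\infty .
\]

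\emph{Step 3: the estimate \eqref{eq:deep-decay-zero}.} For \(y\le y_0\) and \(|k|\ge1\), we have \(\mathrm e^{\pi|k|(y-y_0)/L}\le \mathrm e^{\pi(y-y_0)/L}\). Parseval then yields
\[
  \|\phi(\cdot,y)-\phi_\infty\|_{L^2}\le \mathrm e^{\pi(y-y_0)/L}\,\|\phi(\cdot,y_0)-\phi_\infty\|_{L^2},
\]
so we may take \(C_0=\|\phi(\cdot,y_0)-\phi_\infty\|_{L^2(\T)}\).

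\emph{Step 4: derivatives.} Any derivative \(\partial_x^a\partial_y^b\) with \(a+b\ge1\) annihilates the zero mode. On mode \(k\) it acts as multiplication by a factor of size at most \((\pi|k|/L)^{a+b}\). The required quantity \(C_{a,b}\) is therefore
\[
  \Bigl(\sum_k(\pi|k|/L)^{2(a+b)}|\hat\phi_k(y_0)|^2\Bigr)^{1/2},
\]
which is finite provided \(\phi(\cdot,y_0)\) is smooth.

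\emph{Main obstacle.} The delicate point is showing \(C_{a,b}<\infty\) without assuming smoothness at \(y_0\). We do not use the trace regularity; instead we apply the argument with a slightly higher level \(y_1\) satisfying \(y_0<y_1<\min\eta\). Then
\[
  \hat\phi_k(y_0)=\mathrm e^{-\pi|k|(y_1-y_0)/L}\,\hat\phi_k(y_1).
\]
This exponential factor absorbs every polynomial weight \(|k|^{a+b}\). Meanwhile \(\phi(\cdot,y_1)\in L^2\), either by interior regularity of harmonic functions or from the energy bound, which controls \(\sum_{k\ne0}|k||\hat\phi_k(y_1)|^2\). This gives finite constants \(C_{a,b}\) depending on \(y_1-y_0\) and the energy, and completes the proof of \eqref{eq:deep-decay-derivatives}.
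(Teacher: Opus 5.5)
Your proof is correct and follows the paper's argument essentially step by step: Fourier expansion in $x$ below the level $y_0$, elimination of the growing exponentials and of the linear part of the zero mode via the finite Dirichlet energy, and Parseval combined with $\mathrm e^{2\pi|n|(y-y_0)/L}\le \mathrm e^{2\pi(y-y_0)/L}$ for $n\neq0$. The only difference is your shift to a higher level $y_1$ to bound $C_{a,b}$. This is a harmless refinement of the paper's remark that the sum is finite because $\phi(\cdot,y_0)$ is smooth, which already holds by interior regularity since the line $y=y_0$ lies strictly inside $\Omega$, and it has the small bonus that $C_{a,b}$ then depends only on the energy and on $y_1-y_0$.
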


\begin{proof}
	Since \(y_0<\min_{\T}\eta\), the portion of the fluid domain below
	\(y=y_0\) is the flat half-cylinder
	\[
	\T\times(-\infty,y_0].
	\]
	For \(y\leq y_0\), expand \(\phi\) in Fourier series as
	\begin{equation*}
		\phi(x,y)
		=
		\sum_{n\in\mathbb Z}\widehat{\phi}_n(y)\mathrm{e}^{ik_nx},
		\qquad
		k_n=\frac{\pi n}{L},
	\end{equation*}
	where
	\[
	\widehat{\phi}_n(y)
	=
	\frac{1}{2L}
	\int_{\T}\phi(x,y)\mathrm{e}^{-ik_nx}\,\dd x.
	\]
	Since \(\Delta\phi=0\), each Fourier coefficient satisfies
	\begin{equation}\label{eq:deep-fourier-ode}
		\widehat{\phi}_n''-k_n^2\widehat{\phi}_n=0.
	\end{equation}
	
	For \(n\neq0\), the general solution of
	\eqref{eq:deep-fourier-ode} is
	\[
	\widehat{\phi}_n(y)
	=
	A_n\mathrm{e}^{|k_n|(y-y_0)}
	+
	B_n\mathrm{e}^{-|k_n|(y-y_0)}.
	\]
	Since \(y-y_0\to-\infty\) as \(y\to-\infty\), the first term decays,
	whereas the second one grows exponentially.  To see directly that the
	growing mode is incompatible with finite Dirichlet energy, observe that
	\[
	\widehat{\phi}_n'(y)
	=
	|k_n|A_n\mathrm{e}^{|k_n|(y-y_0)}
	-
	|k_n|B_n\mathrm{e}^{-|k_n|(y-y_0)}.
	\]
	Since \(k_n^2=|k_n|^2\), expanding the two squares gives the exact
	identity
	\[
	|\widehat{\phi}_n'(y)|^2
	+
	k_n^2|\widehat{\phi}_n(y)|^2
	=
	2|k_n|^2
	\left(
	|A_n|^2\mathrm{e}^{2|k_n|(y-y_0)}
	+
	|B_n|^2\mathrm{e}^{-2|k_n|(y-y_0)}
	\right),
	\]
	the cross terms cancelling identically.  Hence, if \(B_n\neq0\), then
	\begin{align*}
		&\int_{-\infty}^{y_0}
		\left(
		|\widehat{\phi}_n'(y)|^2
		+
		k_n^2|\widehat{\phi}_n(y)|^2
		\right)\,\dd y
		\geq
		2|k_n|^2|B_n|^2
		\int_{-\infty}^{y_0}
		\mathrm{e}^{-2|k_n|(y-y_0)}\,\dd y
		=+\infty.
	\end{align*}
	On the other hand, since
	\[
	\T\times(-\infty,y_0]\subset\Omega(t),
	\]
	the finite-energy assumption \eqref{eq:finite-dirichlet-energy} gives
	\[
	\int_{-\infty}^{y_0}\int_{\T}
	|\nabla\phi(x,y)|^2\,\dd x\dd y<\infty.
	\]
	By Parseval's identity and Tonelli's theorem, we get
	\begin{align*}
		&\int_{-\infty}^{y_0}\int_{\T}
		|\nabla\phi(x,y)|^2\,\dd x\dd y
		=
		2L\sum_{m\in\mathbb Z}
		\int_{-\infty}^{y_0}
		\left(
		|\widehat{\phi}_m'(y)|^2
		+
		k_m^2|\widehat{\phi}_m(y)|^2
		\right)\,\dd y.
	\end{align*}
	Every term in the sum is nonnegative.  Hence the finiteness of the
	left-hand side implies, for each \(m\in\mathbb Z\),
	\[
	\int_{-\infty}^{y_0}
	\left(
	|\widehat{\phi}_m'(y)|^2
	+
	k_m^2|\widehat{\phi}_m(y)|^2
	\right)\,\dd y<\infty.
	\]
	For the fixed mode \(n\neq0\), however, we have just shown that
	\(B_n\neq0\) would make this integral infinite.  This contradicts the
	finite Dirichlet energy.  Therefore, we obtain
	\[
	B_n=0,
	\]
	and 
	\begin{equation*}
		\widehat{\phi}_n(y)
		=
		a_n\mathrm{e}^{|k_n|(y-y_0)},
		\qquad
		a_n:=\widehat{\phi}_n(y_0),
		\qquad n\neq0.
	\end{equation*}
	
	For the zero Fourier mode, \eqref{eq:deep-fourier-ode} gives
	\[
	\widehat{\phi}_0''=0,
	\]
	and thus
	\[
	\widehat{\phi}_0(y)=A_0+B_0y.
	\]
	Since
	\[
	\int_{-\infty}^{y_0}
	|\widehat{\phi}_0'(y)|^2\,\dd y<\infty,
	\]
	we must have \(B_0=0\).  Hence the zero Fourier mode is independent of
	\(y\), and by the definition of the Fourier coefficients,
	\[
	\widehat{\phi}_0
	=
	\frac{1}{2L}\int_{\T}\phi(x,y_0)\,\dd x
	=
	\phi_\infty.
	\]
	Consequently, we have
	\begin{equation}\label{eq:deep-fourier-decaying-part}
		\phi(x,y)-\phi_\infty
		=
		\sum_{n\neq0}
		a_n\mathrm{e}^{|k_n|(y-y_0)}\mathrm{e}^{ik_nx}.
	\end{equation}
	
	We first prove \eqref{eq:deep-decay-zero}.  By Parseval's identity, one has
	\[
	\|\phi(\cdot,y)-\phi_\infty\|_{L^2(\T)}^2
	=
	2L
	\sum_{n\neq0}
	|a_n|^2\mathrm{e}^{2|k_n|(y-y_0)}.
	\]
	Since \(y-y_0\leq0,\)  \(|k_n|\geq\frac{\pi}{L}\) and \(
	 n\neq0,\)
	we have
	\[
	\mathrm{e}^{2|k_n|(y-y_0)}
	\leq
	\mathrm{e}^{2\pi(y-y_0)/L}.
	\]
	Therefore
	\[
	\|\phi(\cdot,y)-\phi_\infty\|_{L^2(\T)}^2
	\leq
	\mathrm{e}^{2\pi(y-y_0)/L}
	2L\sum_{n\neq0}|a_n|^2,
	\]
	which proves \eqref{eq:deep-decay-zero}.
	
	Now let \(a,b\in\mathbb N_0\) with \(a+b\geq1\).  Differentiating
	\eqref{eq:deep-fourier-decaying-part} gives
	\[
	\partial_x^a\partial_y^b\phi(x,y)
	=
	\sum_{n\neq0}
	(ik_n)^a|k_n|^b
	a_n\mathrm{e}^{|k_n|(y-y_0)}\mathrm{e}^{ik_nx}.
	\]
	Another application of Parseval's identity yields
	\begin{align*}
		\bigl\|
		\partial_x^a\partial_y^b\phi(\cdot,y)
		\bigr\|_{L^2(\T)}^2
		&=
		2L\sum_{n\neq0}
		|k_n|^{2(a+b)}
		|a_n|^2
		\mathrm{e}^{2|k_n|(y-y_0)}
		\\
		&\leq
		\mathrm{e}^{2\pi(y-y_0)/L}
		2L\sum_{n\neq0}
		|k_n|^{2(a+b)}|a_n|^2.
	\end{align*}
	The last sum is finite because
	\(\phi(\cdot,y_0)\) is smooth.  Taking square roots proves
	\eqref{eq:deep-decay-derivatives}.  Notice that the argument uses only
	periodicity, harmonicity, smoothness at the fixed level \(y=y_0\), and the
	finite-energy condition \eqref{eq:finite-dirichlet-energy}.  In particular,
	it provides the quantitative form of the qualitative decay condition at infinite depth in
	\eqref{eq:deep-water-laplace}.
\end{proof}

\subsection{Cosserat flexural energy and force}

The nonlinear Cosserat bending energy associated with the restoring force
\eqref{eq:flex-force} can be written directly in graph coordinates as
\begin{equation}\label{eq:flex-energy}
  \Eflex(\eta)
  =
  \frac12\int_{\T}
  \frac{\eta_{xx}^2}{(1+\eta_x^2)^{5/2}}\,\dd x.
\end{equation}
For later use we verify directly that \(\Bflex(\eta)\) is the variational
derivative of \(\Eflex\), without introducing auxiliary variables.  For every
smooth periodic variation \(v\),
\begin{equation}\label{eq:first-variation}
  D\Eflex(\eta)[v]
  =
  \int_{\T}\left[
    -\frac52
    \frac{\eta_x\eta_{xx}^2}{(1+\eta_x^2)^{7/2}}\,v_x
    +
    \frac{\eta_{xx}}{(1+\eta_x^2)^{5/2}}\,v_{xx}
  \right]\dd x
  =
  \int_{\T}\Bflex(\eta)v\,\dd x.
\end{equation}
Indeed, differentiating \eqref{eq:flex-energy} in the direction \(v\) gives
the first integral in \eqref{eq:first-variation}, and two periodic
integrations by parts give the second equality.

For comparison with the intrinsic formulation, if
\[
  \kappa
  =
  \frac{\eta_{xx}}{(1+\eta_x^2)^{3/2}},
  \qquad
  \partial_s
  =
  (1+\eta_x^2)^{-1/2}\partial_x
\]
denote the curvature and arc-length derivative, respectively, then the same
force can be written as
\begin{equation}\label{eq:flex-geometric-form}
  \Bflex(\eta)
  =
  \kappa_{ss}+\frac12\kappa^3.
\end{equation}
Thus \eqref{eq:flex-force} and \eqref{eq:flex-geometric-form} are the graph
and intrinsic representations of the same Cosserat restoring force.

\subsection{Dirichlet--Neumann operator and reduction to the interface}

We now rewrite the free boundary problem in terms of quantities defined on the
moving interface.  The use of the surface variables \((\eta,\psi)\) goes back
to Zakharov \cite{Zakharov1968}, while the Dirichlet--Neumann formulation was
developed by Craig and Sulem \cite{CraigSulem1993}.  See also
\cite{LannesBook} for later analytic treatments.  For a graph
\(\eta\) and a periodic boundary value \(\psi\), let \(\phi\) denote the
harmonic extension satisfying the finite-energy condition
\eqref{eq:finite-dirichlet-energy}, with \(\phi|_{y=\eta}=\psi\).  We define the Dirichlet--Neumann operator by
\begin{equation}\label{eq:def-DNO}
  G(\eta)\psi
  =\bigl(\phi_y-\eta_x\phi_x\bigr)\big|_{y=\eta}.
\end{equation}
Thus \(G(\eta)\psi\) is the outward normal derivative multiplied by
\(\sqrt{1+\eta_x^2}\).
Set
\begin{equation*}
  V=\phi_x|_{y=\eta},
  \qquad
  B=\phi_y|_{y=\eta}.
\end{equation*}
Differentiating \(\psi(t,x)=\phi(t,x,\eta(t,x))\) in \(x\) gives
\begin{equation}\label{eq:VB-relations-model}
  \psi_x=V+\eta_xB,
  \qquad
  G(\eta)\psi=B-\eta_xV.
\end{equation}
Hence
\begin{equation*}
  V=\frac{\psi_x-\eta_xG(\eta)\psi}{1+\eta_x^2},
  \qquad
  B=\frac{G(\eta)\psi+\eta_x\psi_x}{1+\eta_x^2}.
\end{equation*}
Define
\begin{equation}\label{eq:N-definition}
  N(\eta)\psi
  =\frac12\psi_x^2
  -\frac12\frac{\bigl(G(\eta)\psi+\eta_x\psi_x\bigr)^2}
                    {1+\eta_x^2},
\end{equation}
which implies
\begin{equation}\label{eq:N-VB}
  N(\eta)\psi
  =\frac12V^2-\frac12B^2+\eta_xVB.
\end{equation}

The free boundary conditions can now be expressed entirely through the
surface variables.  First, the kinematic condition gives
\[
  \eta_t
  =B-\eta_xV
  =G(\eta)\psi.
\]
Next, differentiating
\(\psi(t,x)=\phi(t,x,\eta(t,x))\) with respect to time yields
\[
  \psi_t=\phi_t|_{y=\eta}+B\eta_t.
\]
Using the dynamic boundary condition together with
\(\eta_t=B-\eta_xV\), we obtain
\[
  \psi_t
  =-g\eta-\beta\Bflex(\eta)-P_{\mathrm{ext}}
  -\left(\frac12V^2-\frac12B^2+\eta_xVB\right).
\]
Therefore, by \eqref{eq:N-VB}, the free boundary problem becomes
\begin{equation}\label{eq:hydroelastic-system}
  \begin{cases}
    \eta_t=G(\eta)\psi,\\[2mm]
    \psi_t+g\eta+N(\eta)\psi+\beta\Bflex(\eta)
    =-P_{\mathrm{ext}}.
  \end{cases}
\end{equation}
Conversely, the same identities reconstruct the kinematic and dynamic
boundary conditions from \eqref{eq:hydroelastic-system}.

\subsection{Kinetic energy, Hamiltonian derivatives, and energy balance}

\begin{lemma}[Boundary representation of the kinetic energy]
\label{lem:kinetic-boundary}
For the finite-energy harmonic extension \(\phi\) used in the definition
\eqref{eq:def-DNO},
\begin{equation}\label{eq:kinetic-boundary}
  \frac12\iint_{\Omega(t)}|\nabla\phi|^2\,\dd y\dd x
  =\frac12\int_{\T}\psi G(\eta)\psi\,\dd x.
\end{equation}
Thus \(G(\eta)\) is nonnegative on real functions and \(G(\eta)1=0\).
\end{lemma}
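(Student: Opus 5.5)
The identity will follow from Green's formula on a truncated fluid domain, followed by a passage to the limit using Lemma~\ref{lem:deep-decay}. Fix \(t\) and choose \(y_0<\min_{\T}\eta\). For \(h>-y_0\), set
\[
\Omega_h=\{(x,y):\ x\in\T,\ -h<y<\eta(x)\}.
\]
Since \(\phi\) is harmonic and smooth up to the graph, we have \(|\nabla\phi|^2=\operatorname{div}(\phi\nabla\phi)\) in \(\Omega_h\). The lateral boundaries contribute nothing because everything is \(2L\)-periodic in \(x\). The divergence theorem therefore leaves only the top and bottom boundaries.

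On the top boundary \(y=\eta(x)\), the outward normal times the arclength element is \((-\eta_x,1)\,\dd x\). This gives the contribution
\[
\int_{\T}\phi(\phi_y-\eta_x\phi_x)\big|_{y=\eta}\dd x=\int_{\T}\psi\,G(\eta)\psi\,\dd x,
\]
by \eqref{eq:def-DNO}. On the bottom boundary \(y=-h\), the contribution is \(-\int_{\T}\phi(x,-h)\phi_y(x,-h)\,\dd x\). Write \(\phi=(\phi-\phi_\infty)+\phi_\infty\). The constant part gives \(\phi_\infty\int_{\T}\phi_y(x,-h)\,\dd x\), which is the zero Fourier mode of \(\phi_y\). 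That mode vanishes identically, by the representation \eqref{eq:deep-fourier-decaying-part} in the proof of Lemma~\ref{lem:deep-decay}. The remaining part is bounded by Cauchy--Schwarz together with \eqref{eq:deep-decay-zero} and \eqref{eq:deep-decay-derivatives} (with \(a=0,b=1\)), giving a bound
\[
C_0C_{0,1}\,\mathrm e^{2\pi(-h-y_0)/L}\longrightarrow0\quad\text{as } h\to\infty.
\]
The finite-energy condition \eqref{eq:finite-dirichlet-energy} and monotone convergence then give \(\iint_{\Omega_h}|\nabla\phi|^2\to\iint_{\Omega}|\nabla\phi|^2\). This yields \eqref{eq:kinetic-boundary}.

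The consequences follow directly. The left side of \eqref{eq:kinetic-boundary} is nonnegative for real \(\psi\), so \(G(\eta)\) is nonnegative. For \(\psi\equiv1\), the finite-energy harmonic extension is \(\phi\equiv1\); it is unique because the difference of two such extensions is a finite-energy harmonic function vanishing on the graph, and by the identity just proved it has zero Dirichlet energy, hence is constant and therefore zero. Then \(\phi_x=\phi_y=0\), so \(G(\eta)1=0\).

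The main technical point is the vanishing of the bottom boundary term. It hinges on the cancellation of the constant \(\phi_\infty\), which need not be zero, against the zero mean of \(\phi_y\). Beyond that, one only needs enough regularity of \(\phi\) up to the surface to apply the divergence theorem on \(\Omega_h\), which the standing regularity assumptions provide.
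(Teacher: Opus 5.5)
Your proposal is correct and follows essentially the same route as the paper. Both proofs apply Green's identity on a truncated domain, cancel the constant $\phi_\infty$ against the vanishing zero mode of $\phi_y$, bound the rest of the bottom term by Cauchy--Schwarz with Lemma~\ref{lem:deep-decay}, and pass to the limit by monotone convergence. Your extra uniqueness argument for the extension of $\psi\equiv1$ is a small but welcome justification that the paper simply asserts.
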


\begin{proof}
	For \(R>0\) sufficiently large so that
	\[
	-R<\min_{x\in\T}\eta(t,x),
	\]
	set
	\[
	\Omega_R(t)
	:=
	\Omega(t)\cap\{y>-R\}
	=
	\{(x,y)\in\T\times\R:\ -R<y<\eta(t,x)\}.
	\]
	Since \(\phi\) is harmonic in \(\Omega_R(t)\), Green's first identity gives
	\begin{equation}\label{eq:green-truncated-kinetic}
		\iint_{\Omega_R(t)}|\nabla\phi|^2\,\dd y\dd x
		=
		\int_{\partial\Omega_R(t)}
		\phi\,\partial_n\phi\,\dd S.
	\end{equation}
	Here \(n\) denotes the outward unit normal.  Since the horizontal
	variable is periodic, there are no vertical side contributions when
	\(\Omega_R(t)\) is regarded as a domain in the cylinder
	\(\T\times\R\).  Equivalently, if one works on the representative cell
	\([-L,L]\), the two vertical-side terms cancel by periodicity.
	
	The boundary of \(\Omega_R(t)\) therefore consists of the free surface
	\[
	\Gamma(t)=\{(x,\eta(t,x)):x\in\T\}
	\]
	and the artificial lower boundary
	\[
	\Gamma_R=\{(x,-R):x\in\T\}.
	\]
	
	On the free surface,
	\[
	n
	=
	\frac{(-\eta_x,1)}
	{\sqrt{1+\eta_x^2}},
	\qquad
	\dd S
	=
	\sqrt{1+\eta_x^2}\,\dd x.
	\]
	Hence
	\begin{align*}
		\int_{\Gamma(t)}
		\phi\,\partial_n\phi\,\dd S
		&=
		\int_{\T}
		\psi
		\frac{\phi_y-\eta_x\phi_x}
		{\sqrt{1+\eta_x^2}}
		\sqrt{1+\eta_x^2}\,\dd x\\
		&=
		\int_{\T}
		\psi\,
		\bigl(\phi_y-\eta_x\phi_x\bigr)\big|_{y=\eta}
		\,\dd x\\
		&=
		\int_{\T}\psi G(\eta)\psi\,\dd x.
	\end{align*}
	
	On the lower boundary \(\Gamma_R\), the outward unit normal is
	\(n=(0,-1)\).  Therefore
	\[
	\int_{\Gamma_R}
	\phi\,\partial_n\phi\,\dd S
	=
	-\int_{\T}
	\phi(x,-R)\phi_y(x,-R)\,\dd x.
	\]
	Lemma~\ref{lem:deep-decay}, together with the description of the zero
	Fourier mode in its proof, implies
	\begin{equation}\label{eq:kinetic-lower-boundary-vanish}
		\int_{\T}
		\phi(x,-R)\phi_y(x,-R)\,\dd x
		\longrightarrow0
		\qquad\text{as }R\to\infty.
	\end{equation}
	Indeed, this follows from the decomposition
	\[
	\phi
	=
	\bigl(\phi-\phi_\infty\bigr)+\phi_\infty.
	\]
	Since the zero Fourier mode is independent of \(y\),
	\[
	\int_{\T}\phi_y(x,-R)\,\dd x=0,
	\]
	and hence
	\begin{align*}
		\left|
		\int_{\T}\phi(x,-R)\phi_y(x,-R)\,\dd x
		\right|
		&=
		\left|
		\int_{\T}
		\bigl(\phi(x,-R)-\phi_\infty\bigr)
		\phi_y(x,-R)\,\dd x
		\right|
		\\
		&\leq
		\|\phi(\cdot,-R)-\phi_\infty\|_{L^2(\T)}
		\|\phi_y(\cdot,-R)\|_{L^2(\T)}
		\\
		&\leq
		C \mathrm{e}^{-2\pi R/L},
	\end{align*}
	Here the last inequality follows from Lemma~\ref{lem:deep-decay}.
	
	Substituting the two boundary contributions into
	\eqref{eq:green-truncated-kinetic}, we obtain
	\begin{equation*}
		\iint_{\Omega_R(t)}|\nabla\phi|^2\,\dd y\dd x
		=
		\int_{\T}\psi G(\eta)\psi\,\dd x
		-
		\int_{\T}\phi(x,-R)\phi_y(x,-R)\,\dd x.
	\end{equation*}
	As \(R\to\infty\), the domains \(\Omega_R(t)\) increase to
	\(\Omega(t)\).  Since \(|\nabla\phi|^2\geq0\), the monotone convergence
	theorem gives
	\[
	\lim_{R\to\infty}
	\iint_{\Omega_R(t)}|\nabla\phi|^2\,\dd y\dd x
	=
	\iint_{\Omega(t)}|\nabla\phi|^2\,\dd y\dd x.
	\]
	Together with \eqref{eq:kinetic-lower-boundary-vanish}, this yields
	\[
	\iint_{\Omega(t)}|\nabla\phi|^2\,\dd y\dd x
	=
	\int_{\T}\psi G(\eta)\psi\,\dd x.
	\]
	
	The same identity immediately implies, for every real boundary datum
	\(\psi\),
	\[
	\int_{\T}\psi G(\eta)\psi\,\dd x
	=
	\iint_{\Omega(t)}|\nabla\phi|^2\,\dd y\dd x
	\geq0.
	\]
	Thus \(G(\eta)\) is nonnegative in the quadratic-form sense.
	
	Finally, if the boundary datum is the constant function \(1\), its
	finite-energy harmonic extension is the constant function
	\(\phi\equiv1\).  Hence
	\[
	G(\eta)1
	=
	\bigl(\phi_y-\eta_x\phi_x\bigr)\big|_{y=\eta}
	=0.
	\]
	This completes the proof.
\end{proof}

Zakharov showed that the irrotational water-wave problem is Hamiltonian in
the surface variables \((\eta,\psi)\) \cite{Zakharov1968}.  In the
Dirichlet--Neumann formulation, the fluid kinetic energy is represented by the
boundary quadratic form
\(\frac12\int_{\T}\psi G(\eta)\psi\,\dd x\).  See
\cite{CraigSulem1993} and, for analytic treatments of this formulation,
\cite{LannesBook}.  Adding the gravitational and Cosserat
bending energies, we define the total  Hamiltonian by
\begin{equation}\label{eq:total-energy}
  \Etot(\eta,\psi)
  =\frac12\int_{\T}\psi G(\eta)\psi\,\dd x
  +\frac g2\int_{\T}\eta^2\,\dd x
  +\beta\Eflex(\eta).
\end{equation}

For the kinetic part
\[
  \Kfluid(\eta,\psi)=\frac12\int_{\T}\psi G(\eta)\psi\,\dd x,
\]
we shall use the following two variational identities:
\begin{align*}
  D_\psi\Kfluid(\eta,\psi)[\dot\psi]
  &=\int_{\T}G(\eta)\psi\,\dot\psi\,\dd x,\\
  D_\eta\Kfluid(\eta,\psi)[\dot\eta]
  &=\int_{\T}N(\eta)\psi\,\dot\eta\,\dd x.
\end{align*}
Indeed, the first identity follows from self-adjointness of the Dirichlet--Neumann operator,
\begin{equation*}
  \int_{\T}fG(\eta)g\,\dd x
  =\iint_{\Omega(\eta)}\nabla\phi_f\cdot\nabla\phi_g\,\dd y\dd x
  =\int_{\T}gG(\eta)f\,\dd x,
\end{equation*}
where the lower boundary contribution vanishes by Lemma~\ref{lem:deep-decay}
and the same Cauchy--Schwarz estimate used in the proof of
Lemma~\ref{lem:kinetic-boundary}.  For the variation with respect to the
graph, we use the shape derivative of the Dirichlet--Neumann operator.  In the
present notation it reads (see \cite{LannesJAMS,LannesBook})
\begin{equation}\label{eq:DNO-shape-derivative}
  G'(\eta)[\dot\eta]\psi
  =-G(\eta)(B\dot\eta)-\partial_x(V\dot\eta).
\end{equation}
Combining \eqref{eq:DNO-shape-derivative} with
\eqref{eq:VB-relations-model}, self-adjointness, and periodic integration by
parts gives
\[
  D_\eta\Kfluid[\dot\eta]
  =\frac12\int_{\T}
    \left[-(B-\eta_xV)B+(V+\eta_xB)V\right]\dot\eta\,\dd x
  =\int_{\T}N(\eta)\psi\,\dot\eta\,\dd x.
\]
Together with \eqref{eq:first-variation}, these identities give
\begin{equation*}
  \frac{\delta\Etot}{\delta\psi}=G(\eta)\psi,
  \qquad
  \frac{\delta\Etot}{\delta\eta}
  =g\eta+N(\eta)\psi+\beta\Bflex(\eta).
\end{equation*}
Combining these variational identities with
\eqref{eq:hydroelastic-system} gives the forced Hamiltonian form
\begin{equation}\label{eq:hamiltonian-system}
  \eta_t=\frac{\delta\Etot}{\delta\psi},
  \qquad
  \psi_t=-\frac{\delta\Etot}{\delta\eta}-P_{\mathrm{ext}}.
\end{equation}
Consequently, every solution satisfying the regularity assumptions of
Theorem~\ref{thm:stabilization} satisfies the energy balance
\begin{equation}\label{eq:general-energy-balance}
  \frac{\dd}{\dd t}\Etot(t)
  =-\int_{\T}P_{\mathrm{ext}}\eta_t\,\dd x.
\end{equation}
Indeed, differentiating \(\Etot\) along the flow and using
\eqref{eq:hamiltonian-system} gives
\[
  \frac{\dd}{\dd t}\Etot
  =\int_{\T}\left(
  \frac{\delta\Etot}{\delta\eta}\eta_t
  +\frac{\delta\Etot}{\delta\psi}\psi_t\right)\dd x
  =-\int_{\T}P_{\mathrm{ext}}\eta_t\,\dd x.
\]

\subsection{Zero modes}

Set
\[
  \avg{f}=\frac1{2L}\int_{\T}f(x)\,\dd x,
  \qquad
  \psit=\psi-\avg{\psi},
  \qquad
  \Pt=P_{\mathrm{ext}}-\avg{P_{\mathrm{ext}}}.
\]

\begin{lemma}[Zero-mode identities]
\label{lem:zero-mode}
Let \((\eta,\psi,P_{\mathrm{ext}})\) be a solution of
\eqref{eq:hydroelastic-system} on \([0,T]\).  Then, for every
\(t\in[0,T]\), we have
\begin{align}
  \int_{\T}G(\eta)\psi\,\dd x&=0,
  \label{eq:G-zero}\\
  \int_{\T}\Bflex(\eta)\,\dd x&=0,
  \label{eq:B-zero}\\
  \int_{\T}N(\eta)\psi\,\dd x&=0.
  \label{eq:N-zero-mean}
\end{align}
Consequently, \(\int_{\T}\eta(t,x)\,\dd x\) is conserved.  In view of
\eqref{eq:zero-mean-initial}, we have
\[
  \int_{\T}\eta(t,x)\,\dd x=0
  \qquad\text{for all times of existence}.
\]
Moreover, we have
\begin{equation}\label{eq:mean-psi-evolution}
  \frac{\dd}{\dd t}\avg{\psi}
  +\avg{P_{\mathrm{ext}}}=0.
\end{equation}
\end{lemma}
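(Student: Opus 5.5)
The plan is to prove the three identities \eqref{eq:G-zero}--\eqref{eq:N-zero-mean} one at a time, and then read off the conservation laws from the system \eqref{eq:hydroelastic-system}.

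\emph{Identity \eqref{eq:G-zero}.} We use the self-adjointness of $G(\eta)$ stated after Lemma~\ref{lem:kinetic-boundary}, together with $G(\eta)1=0$ from that lemma:
\[
\int_{\T}G(\eta)\psi\,\dd x=\int_{\T}1\cdot G(\eta)\psi\,\dd x=\int_{\T}\psi\,G(\eta)1\,\dd x=0.
\]
The self-adjointness was established by a Green identity on truncated domains, with the lower boundary term vanishing by Lemma~\ref{lem:deep-decay}. Alternatively, one can apply the divergence theorem to $\nabla\phi$ on $\Omega_R(t)$. The flux through the surface is $\int_{\T}(\phi_y-\eta_x\phi_x)\,\dd x$. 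The flux through $y=-R$ is $-\int_{\T}\phi_y(x,-R)\,\dd x$, which is identically zero, because the zero Fourier mode of $\phi$ is independent of $y$ (proof of Lemma~\ref{lem:deep-decay}).

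\emph{Identity \eqref{eq:B-zero}.} By \eqref{eq:flex-force}, $\Bflex(\eta)$ is an exact $x$-derivative of periodic functions, namely $\partial_x\bigl(\partial_x(\eta_{xx}(1+\eta_x^2)^{-5/2})+\tfrac52\eta_x\eta_{xx}^2(1+\eta_x^2)^{-7/2}\bigr)$. Its integral over $\T$ therefore vanishes. Equivalently, take $v\equiv1$ in \eqref{eq:first-variation}.

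\emph{Identity \eqref{eq:N-zero-mean}.} This is the main step. I will use the shape-derivative variational identity $D_\eta\Kfluid(\eta,\psi)[\dot\eta]=\int_{\T}N(\eta)\psi\,\dot\eta\,\dd x$ with $\dot\eta\equiv1$. A vertical translation $\eta\mapsto\eta+\varepsilon$ maps the fluid domain to its translate, and the harmonic extension of $\psi$ is then $\phi(x,y-\varepsilon)$. Hence $G(\eta+\varepsilon)\psi=G(\eta)\psi$, so $\Kfluid(\eta+\varepsilon,\psi)$ is independent of $\varepsilon$, and the derivative at $\varepsilon=0$ is zero. As a cross-check at the level of formulas, \eqref{eq:DNO-shape-derivative} gives $G'(\eta)[1]\psi=-G(\eta)B-\partial_x V$. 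Pairing this with $\psi$ and using self-adjointness together with \eqref{eq:VB-relations-model} reproduces $\int_{\T}N\,\dd x$, which confirms consistency. The main obstacle is to make sure that \eqref{eq:DNO-shape-derivative} applies to the constant direction $\dot\eta=1$ in infinite depth. Translation invariance of the half-cylinder sidesteps this issue cleanly, so I will rely on that argument.

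\emph{Conservation laws.} Integrating the first equation of \eqref{eq:hydroelastic-system} over $\T$ and using \eqref{eq:G-zero} gives $\frac{\dd}{\dd t}\int_{\T}\eta\,\dd x=0$. Combined with \eqref{eq:zero-mean-initial}, the mean of $\eta$ is zero for all times. Averaging the second equation of \eqref{eq:hydroelastic-system}, the terms $g\avg{\eta}$, $\avg{N(\eta)\psi}$ and $\beta\avg{\Bflex(\eta)}$ all vanish by the preceding steps, which yields \eqref{eq:mean-psi-evolution}. Differentiating under the integral sign is justified by the regularity assumption \eqref{eq:theorem-regularity}.
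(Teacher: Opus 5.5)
Your proof is correct. For \eqref{eq:G-zero}, \eqref{eq:B-zero} and the two conservation statements it is essentially the paper's argument. The paper proves \eqref{eq:G-zero} by the divergence theorem for $\nabla\phi$ on $\Omega_R(t)$, which is the alternative you list. Your primary route via self-adjointness and $G(\eta)1=0$ rests on the same Green identity.

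The real difference is in \eqref{eq:N-zero-mean}. The paper argues directly in the fluid. The field $X=(\phi_x\phi_y,\tfrac12(\phi_y^2-\phi_x^2))$ is divergence free, and its flux through the free surface against $(-\eta_x,1)$ is exactly $-N(\eta)\psi$. Its flux through $y=-R$ is $O(\mathrm e^{-2\pi R/L})$ by Lemma~\ref{lem:deep-decay}.

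You argue at the Hamiltonian level instead: $\Kfluid(\eta+\varepsilon,\psi)$ does not depend on $\varepsilon$, and $\delta\Kfluid/\delta\eta=N(\eta)\psi$. These are two faces of one fact, since $X$ is the second row of the harmonic stress tensor, i.e.\ the Noether current of vertical translations. Your version is shorter and explains why the identity holds. The paper's version is self-contained: it uses only harmonicity and the decay lemma, and it is the case $f\equiv1$ of the weighted stress identity (Lemma~\ref{lem:harmonic-stress}) used later.

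One remark in your write-up is off. Translation invariance does not sidestep the question of whether \eqref{eq:DNO-shape-derivative} holds for $\dot\eta=1$; it only shows $D_\eta\Kfluid[1]=0$. The identification $D_\eta\Kfluid[1]=\int_{\T}N(\eta)\psi\,\dd x$ is exactly the variational identity in the direction $\dot\eta=1$, which the paper derives from that quoted formula. This is harmless, because the paper states the identity for every smooth periodic direction.

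If you want to avoid the quoted shape derivative altogether, note that $\phi_y$ is the finite-energy harmonic extension of $B$. Hence $G(\eta)B=(\phi_{yy}-\eta_x\phi_{xy})|_{y=\eta}=-\partial_xV$. Your own pairing computation then gives $\int_{\T}N(\eta)\psi\,\dd x=-\frac12\int_{\T}\psi\,\bigl(G(\eta)B+\partial_xV\bigr)\,\dd x=0$ directly.
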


\begin{proof}
We prove the three zero-mode identities separately.

For \eqref{eq:G-zero}, fix a time \(t\) and, for \(R>0\) sufficiently large,
consider the truncated domain
\[
  \Omega_R(t)
  =\{(x,y)\in\T\times\R:-R<y<\eta(t,x)\}.
\]
Since \(\Delta\phi=0\), the divergence theorem applied to \(\nabla\phi\)
gives
\[
  0
  =\iint_{\Omega_R(t)}\Delta\phi\,\dd y\dd x
  =\int_{\partial\Omega_R(t)}\partial_n\phi\,\dd S.
\]
On the free surface, with outward unit normal
\[
  n=\frac{(-\eta_x,1)}{\sqrt{1+\eta_x^2}},
  \qquad
  \dd S=\sqrt{1+\eta_x^2}\,\dd x,
\]
we have
\[
  \partial_n\phi\,\dd S
  =\bigl(\phi_y-\eta_x\phi_x\bigr)\big|_{y=\eta}\,\dd x
  =G(\eta)\psi\,\dd x.
\]
On the artificial lower boundary \(y=-R\), the outward normal is
\((0,-1)\), so its contribution is
\(-\int_{\T}\phi_y(x,-R)\,\dd x\).  Hence
\[
  \int_{\T}G(\eta)\psi\,\dd x
  =\int_{\T}\phi_y(x,-R)\,\dd x.
\]
The proof of Lemma~\ref{lem:deep-decay} shows that the zero Fourier mode of
\(\phi\) is independent of \(y\).  Hence
\[
  \int_{\T}\phi_y(x,-R)\,\dd x=0
\]
for all sufficiently large \(R\).  This proves \eqref{eq:G-zero}.

For \eqref{eq:B-zero}, integrate the graph-coordinate expression
\eqref{eq:flex-force} over one period.  Periodicity gives
\begin{align*}
  \int_{\T}\Bflex(\eta)\,\dd x
  &=\int_{\T}\partial_x^2\left(
    \frac{\eta_{xx}}{(1+\eta_x^2)^{5/2}}
  \right)\dd x
  +\frac52\int_{\T}\partial_x\left(
    \frac{\eta_x\eta_{xx}^2}{(1+\eta_x^2)^{7/2}}
  \right)\dd x
  =0.
\end{align*}

It remains to prove \eqref{eq:N-zero-mean}.  Introduce the vector field
\[
  X
  =\left(
    \phi_x\phi_y,
    \frac12(\phi_y^2-\phi_x^2)
  \right).
\]
A direct differentiation, using \(\Delta\phi=0\), yields
\[
  \operatorname{div}X
  =\phi_{xx}\phi_y+\phi_x\phi_{xy}
   +\phi_y\phi_{yy}-\phi_x\phi_{xy}
  =\phi_y(\phi_{xx}+\phi_{yy})
  =0.
\]
Apply the divergence theorem to \(X\) on \(\Omega_R(t)\).  On the free
surface it is convenient to use the non-normalized outward normal
\((-\eta_x,1)\).  The corresponding flux density is
\begin{align*}
  X\cdot(-\eta_x,1)
  &=-\eta_x\phi_x\phi_y
    +\frac12(\phi_y^2-\phi_x^2)\\
  &=-\left(
    \frac12V^2-\frac12B^2+\eta_xVB
  \right)
  =-N(\eta)\psi,
\end{align*}
where \(V=\phi_x|_{y=\eta}\) and \(B=\phi_y|_{y=\eta}\).
On the lower boundary \(y=-R\), the flux is
\[
  -\frac12\bigl(\phi_y^2-\phi_x^2\bigr)(x,-R).
\]
Lemma~\ref{lem:deep-decay} gives
\[
  \left|
  \frac12\int_{\T}
  \bigl(\phi_y^2-\phi_x^2\bigr)(x,-R)\,\dd x
  \right|
  \leq
  \frac12\|\nabla\phi(\cdot,-R)\|_{L^2(\T)}^2
  \leq C\mathrm{e}^{-2\pi R/L}.
\]
Thus the lower-boundary contribution tends to zero as \(R\to\infty\).
There are no side contributions on the periodic cylinder.  Equivalently, the
two vertical-side contributions on \([-L,L]\) cancel by periodicity.  Passing
to the limit therefore gives
\[
  0=-\int_{\T}N(\eta)\psi\,\dd x,
\]
which proves \eqref{eq:N-zero-mean}.

Finally, integrating the kinematic equation in \eqref{eq:hydroelastic-system}
and using \eqref{eq:G-zero} gives
\[
  \frac{\dd}{\dd t}\int_{\T}\eta\,\dd x
  =\int_{\T}G(\eta)\psi\,\dd x
  =0.
\]
Thus the spatial integral of \(\eta\) is conserved, and
\eqref{eq:zero-mean-initial} implies
\[
  \int_{\T}\eta(t,x)\,\dd x=0.
\]
Integrating the dynamic equation in \eqref{eq:hydroelastic-system} and using
\(\int_{\T}\eta(t,x)\,\dd x=0\), together with \eqref{eq:B-zero} and
\eqref{eq:N-zero-mean} yields
\[
  \frac{\dd}{\dd t}\int_{\T}\psi\,\dd x
  =-\int_{\T}P_{\mathrm{ext}}\,\dd x.
\]
Dividing by \(2L\) proves \eqref{eq:mean-psi-evolution}.
\end{proof}
\section{Construction of the localized feedback}\label{sec:feedback}

Our localization is adapted from Alazard's stabilization arguments for
gravity waves \cite{AlazardGravity} and for water waves with surface tension
\cite{AlazardCapillary}, where a localized pressure feedback is combined with a
spatial multiplier.  The multiplier is chosen here to agree with the dilation
field \(x\partial_x\) away
from the damping region and to vanish at the endpoints of the symmetry cell.
Lemma~\ref{lem:cutoff} also records the bounds on the localization profile
used later in the stabilization argument.

\begin{lemma}[Localized multiplier and damping profile]\label{lem:cutoff}
Fix \(0<\delta<L\).  There exists an even \(2L\)-periodic function
\(\vartheta\in C^\infty(\T)\) such that
\[
  0\leq\vartheta\leq1,\qquad
  \vartheta=1\ \text{on }|x|\leq L-\delta,\qquad
  \vartheta=0\ \text{near }x=\pm L,
\]
and \(x\vartheta_x(x)\leq0\) for \(0\leq x\leq L\).  Define
\begin{equation}\label{eq:m-chi-construction}
  m(x)=x\vartheta(x),
  \qquad
  \chi(x)=1-m_x(x)=1-\vartheta(x)-x\vartheta_x(x),\quad x\in[-L,L],
\end{equation}
and extend \(m\) periodically,  then \(m\) is smooth, odd, and
\(2L\)-periodic, while \(\chi\) is smooth, even, and nonnegative.  Moreover, we have
\begin{equation}\label{eq:m-assumptions}
  m(x)=x\quad\text{for }|x|\leq L-\delta,
  \qquad
  m(\pm L)=0,
  \qquad
  \chi=0\quad\text{for }|x|\leq L-\delta,
\end{equation}
\begin{equation}\label{eq:x-minus-m}
  |x-m(x)|\leq L\chi(x)
  \qquad (x\in[-L,L]),
\end{equation}
and
\begin{equation}\label{eq:M3-positive}
  \|m_{xxx}\|_{L^\infty(\T)}>0.
\end{equation}
\end{lemma}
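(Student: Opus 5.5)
We construct \(\vartheta\) explicitly from a standard monotone cutoff and then read off the listed properties. Fix a smooth nondecreasing \(\rho:\R\to[0,1]\) with \(\rho(s)=0\) for \(s\leq0\) and \(\rho(s)=1\) for \(s\geq1\). For \(x\in[-L,L]\) set
\[
  \vartheta(x)=\rho\!\left(\frac{L-\delta/2-|x|}{\delta/2}\right).
\]
Then \(\vartheta=1\) for \(|x|\leq L-\delta\) and \(\vartheta=0\) for \(|x|\geq L-\delta/2\), so \(\vartheta\) vanishes near \(\pm L\). It is even and nonincreasing in \(|x|\), hence \(x\vartheta_x\leq0\). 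Smoothness is automatic: \(\vartheta\) is constant near \(x=0\) (since \(\delta<L\)) and near \(\pm L\), so \(|x|\) causes no problem, and the periodic extension is \(C^\infty\).

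Next we verify the properties of \(m=x\vartheta\) and \(\chi=1-m_x\). Since \(x\) is odd and \(\vartheta\) even, \(m\) is odd on \([-L,L]\). Because \(\vartheta\equiv0\) near \(\pm L\), \(m\) vanishes identically near the endpoints, so its \(2L\)-periodic extension is smooth and odd with \(m(\pm L)=0\). The function \(\chi=1-\vartheta-x\vartheta_x\) is even, equals \(0\) on \(|x|\leq L-\delta\), and is nonnegative, since \(1-\vartheta\geq0\) and \(-x\vartheta_x\geq0\). This gives \eqref{eq:m-assumptions}.

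For \eqref{eq:x-minus-m}, note that \(x-m=x(1-\vartheta)\), so \(|x-m|=|x|(1-\vartheta)\leq L(1-\vartheta)\leq L\chi\). The last inequality again uses \(-x\vartheta_x\geq0\).

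Finally, for \eqref{eq:M3-positive} we argue by contradiction. Suppose \(m_{xxx}\equiv0\). Then \(m\) is a quadratic polynomial on \([-L,L]\). Since \(m(x)=x\) on the nonempty open interval \(|x|<L-\delta\), we get \(m(x)=x\) on all of \([-L,L]\). This contradicts \(m(L)=0\). Hence \(\|m_{xxx}\|_{L^\infty}>0\).

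None of these steps is a real obstacle. The only points requiring care are the smoothness at \(x=0\) and \(x=\pm L\) in the periodic extension, and making sure the sign condition \(x\vartheta_x\leq0\) is built into the construction.
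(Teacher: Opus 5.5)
Your proposal is correct and follows essentially the same route as the paper: a monotone cutoff in $|x|$ that is constant near $0$ and $\pm L$, parity of $m$ and $\chi$, the sign of $\chi=1-\vartheta-x\vartheta_x$, and the chain $|x-m|=|x|(1-\vartheta)\le L(1-\vartheta)\le L\chi$. The only difference is in the proof of $\|m_{xxx}\|_{L^\infty}>0$. You get the contradiction directly from $m(L)=0$ on $[-L,L]$, whereas the paper lifts $m$ to $\mathbb{R}$ and uses periodicity and oddness to force $m\equiv0$; both arguments are valid.
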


\begin{proof}
	Choose \(\vartheta\in C^\infty([0,L])\) such that
	\[
	0\leq\vartheta\leq1,
	\qquad
	\vartheta=1
	\quad\text{on }[0,L-\delta],
	\]
	such that \(\vartheta=0\) in a neighborhood of \(L\), and such that
	\[
	\vartheta_x\leq0
	\qquad\text{on }[0,L].
	\]
	Such a cutoff is obtained by a standard smooth monotone transition between
	the values \(1\) and \(0\) inside \((L-\delta,L)\).  Since \(\vartheta\)
	is constant in neighborhoods of both \(0\) and \(L\), its even extension
	to \([-L,L]\) is smooth at \(x=0\), and the resulting function and all its
	derivatives agree at \(x=-L\) and \(x=L\).  Its \(2L\)-periodic extension
	therefore defines a smooth even function on \(\T\).
	
	For \(0\leq x\leq L\), we have \(x\geq0\) and
	\(\vartheta_x(x)\leq0\), then
	\[
	x\vartheta_x(x)\leq0.
	\]
	Define \(m\) and \(\chi\) by \eqref{eq:m-chi-construction}.  Since
	\(\vartheta\) is even, \(m(x)=x\vartheta(x)\) is odd on \([-L,L]\).
	Moreover, \(\vartheta\) vanishes near \(x=\pm L\), and hence \(m\) also
	vanishes there.  Thus its periodic extension is smooth and odd.  Since the
	derivative of an odd periodic function is even,
	\[
	\chi=1-m_x
	\]
	is smooth, even, and \(2L\)-periodic.
	
	Using
	\[
	m_x=\vartheta+x\vartheta_x,
	\]
	we have
	\[
	\chi
	=
	1-\vartheta-x\vartheta_x.
	\]
	On \(0\leq x\leq L\), both
	\[
	1-\vartheta(x)\geq0,
	\qquad
	-x\vartheta_x(x)\geq0,
	\]
	and therefore
	\[
	\chi(x)\geq0.
	\]
	Since \(\chi\) is even, the same conclusion holds on the whole
	representative interval \([-L,L]\).
	
	On \(|x|\leq L-\delta\), we have
	\(\vartheta=1\) and \(\vartheta_x=0\).  Hence
	\[
	m(x)=x,
	\qquad
	\chi(x)=0.
	\]
	Since \(\vartheta\) vanishes near \(x=\pm L\), we also have
	\[
	m(\pm L)=0.
	\]
	This proves \eqref{eq:m-assumptions}.
	
	We next prove \eqref{eq:x-minus-m}.  For \(0\leq x\leq L\),
	\[
	x-m(x)
	=
	x(1-\vartheta(x)).
	\]
	Because \(0\leq x\leq L\) and \(1-\vartheta\geq0\),
	\[
	0
	\leq
	x-m(x)
	\leq
	L(1-\vartheta(x)).
	\]
	Furthermore, \(-x\vartheta_x(x)\geq0\), and 	\[
	1-\vartheta(x)
	\leq
	1-\vartheta(x)-x\vartheta_x(x)
	=
	\chi(x).
	\]
	Consequently, we have
	\[
	|x-m(x)|
	=
	x-m(x)
	\leq
	L\chi(x),
	\qquad 0\leq x\leq L.
	\]
	Now \(x-m(x)\) is odd, whereas \(\chi\) is even.  Hence, for
	\(-L\leq x\leq0\),
	\[
	|x-m(x)|
	=
	|(-x)-m(-x)|
	\leq
	L\chi(-x)
	=
	L\chi(x).
	\]
	Thus \eqref{eq:x-minus-m} holds on all of \([-L,L]\).
	
	Finally, suppose for contradiction that
	\[
	m_{xxx}\equiv0
	\qquad\text{on }\T.
	\]
	Let \(\widetilde m\) denote the smooth \(2L\)-periodic lift of \(m\) to
	\(\R\).  Then
	\[
	\widetilde m'''(x)=0
	\qquad\text{for all }x\in\R,
	\]
	so
	\[
	\widetilde m(x)=a x^2+b x+c
	\]
	for some constants \(a,b,c\).  Since \(\widetilde m\) is periodic, then
	\[
	\widetilde m(x+2L)-\widetilde m(x)=0
	\qquad\text{for all }x\in\R.
	\]
	Expanding the left-hand side gives
	\[
	4aLx+4aL^2+2Lb=0
	\qquad\text{for all }x,
	\]
	and therefore \(a=b=0\).  Thus \(\widetilde m\) is constant.
	Because \(m\) is odd, this constant must be zero.  Hence
	\[
	m\equiv0,
	\]
	which contradicts
	\[
	m(x)=x
	\qquad\text{for }|x|\leq L-\delta,
	\]
	where \(L-\delta>0\).  Therefore
	\[
	\|m_{xxx}\|_{L^\infty(\T)}>0,
	\]
	which proves \eqref{eq:M3-positive}.
\end{proof}

In analogy with the localized velocity-pressure feedback used in
\cite{AlazardGravity,AlazardCapillary}, we define the exterior pressure by
\begin{equation}\label{eq:feedback}
  P_{\mathrm{ext}}(t,x)=\lambda\chi(x)\eta_t(t,x),
  \qquad \lambda>0.
\end{equation}
Because \(\chi\) vanishes on \(|x|\leq L-\delta\), the pressure is localized
near the endpoints of the symmetry cell.  Near \(x=\pm L\), one has
\(\chi=1\) and \(\chi_x=0\).  Evenness gives
\(\partial_x\eta_t=0\) there, and hence
\(\partial_xP_{\mathrm{ext}}=0\).  Thus the feedback law is compatible with
the even periodic symmetry class.  Its sign at the Hamiltonian level follows
directly from the energy identity.

\begin{proposition}[Energy dissipation]\label{prop:dissipation}
Every solution of
\eqref{eq:hydroelastic-system}--\eqref{eq:feedback} satisfies
\begin{equation}\label{eq:energy-dissipation}
  \frac{\dd}{\dd t}\Etot(t)
  =
  -\lambda\int_{\T}\chi(x)\eta_t(t,x)^2\,\dd x
  \leq0.
\end{equation}
Consequently, we have
\begin{equation}\label{eq:damping-integral}
  \lambda\int_0^T\int_{\T}
  \chi\bigl(G(\eta)\psi\bigr)^2\,\dd x\dd t
  \leq \Etot(0).
\end{equation}
\end{proposition}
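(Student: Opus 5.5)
The plan is to specialise the general energy balance \eqref{eq:general-energy-balance} to the feedback law \eqref{eq:feedback}, and then integrate the result in time.

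First, the balance \eqref{eq:general-energy-balance} was derived from the forced Hamiltonian form \eqref{eq:hamiltonian-system}. That form rests on three ingredients: the variational identities for $\Kfluid$, namely self-adjointness of $G(\eta)$ and the shape derivative \eqref{eq:DNO-shape-derivative}; the first variation \eqref{eq:first-variation} of $\Eflex$; and the chain rule for $t\mapsto\Etot(\eta(t),\psi(t))$. The regularity \eqref{eq:theorem-regularity} makes this chain rule legitimate. With $\eta\in C^1H^{s-1}$, $\psi\in C^1H^{s-5/2}$ and $s\ge5$, the pairings $\int\frac{\delta\Etot}{\delta\eta}\eta_t$ and $\int G(\eta)\psi\,\psi_t$ are well defined. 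Here $\Bflex(\eta)\in H^{s-5/2}$ and $G(\eta)\psi\in H^{s-1}$, so a standard difference-quotient argument in $t$ suffices.

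Substituting $P_{\mathrm{ext}}=\lambda\chi\eta_t$ into \eqref{eq:general-energy-balance} gives
\[
\frac{\dd}{\dd t}\Etot=-\lambda\int_{\T}\chi\,\eta_t^2\,\dd x .
\]
This quantity is $\le0$ because $\lambda>0$ and $\chi\ge0$ by Lemma~\ref{lem:cutoff}. That proves \eqref{eq:energy-dissipation}. One should also check that the feedback pressure is admissible in the dynamic equation, i.e.\ that it is even, periodic and regular enough. This holds since $\chi$ is smooth and even and $\eta_t=G(\eta)\psi$ is even.

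Next, integrate \eqref{eq:energy-dissipation} over $[0,T]$:
\[
\Etot(T)+\lambda\int_0^T\!\!\int_{\T}\chi\,\eta_t^2\,\dd x\,\dd t=\Etot(0).
\]
Each term of $\Etot$ is nonnegative. The kinetic part is nonnegative by Lemma~\ref{lem:kinetic-boundary}, and the gravity and bending terms are manifestly nonnegative. Hence $\Etot(T)\ge0$, and dropping it gives an inequality. Finally, replacing $\eta_t$ by $G(\eta)\psi$ through the kinematic equation yields \eqref{eq:damping-integral}.

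The only point needing care is justifying the time differentiation of $\Etot$ at the stated regularity, in particular of the kinetic term $\frac12\int\psi G(\eta)\psi$, which requires the shape-derivative formula for $G$. Once \eqref{eq:general-energy-balance} is granted, as stated earlier in the paper, everything else is immediate.
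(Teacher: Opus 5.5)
Your proposal is correct and follows the paper's proof: substitute $P_{\mathrm{ext}}=\lambda\chi\eta_t$ into the energy balance \eqref{eq:general-energy-balance}, use $\lambda>0$ and $\chi\geq0$, integrate over $[0,T]$, drop $\Etot(T)\geq0$, and replace $\eta_t$ by $G(\eta)\psi$ via the kinematic equation. Your remarks on the regularity needed for the chain rule and on the nonnegativity of the kinetic part via Lemma~\ref{lem:kinetic-boundary} make explicit two points the paper uses implicitly.
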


\begin{proof}
By the general balance law \eqref{eq:general-energy-balance}, we have
\[
  \frac{\dd}{\dd t}\Etot(t)
  =-\int_{\T}P_{\mathrm{ext}}\eta_t\,\dd x.
\]
Together with \eqref{eq:feedback} gives
\[
  \frac{\dd}{\dd t}\Etot(t)
  =-\lambda\int_{\T}\chi\eta_t^2\,\dd x\leq0,
\]
because \(\lambda>0\) and \(\chi\geq0\).  Integration from \(0\) to
\(T\) yields
\[
  \lambda\int_0^T\int_{\T}\chi\eta_t^2\,\dd x\dd t
  =\Etot(0)-\Etot(T)\leq\Etot(0).
\]
Finally, the kinematic equation \(\eta_t=G(\eta)\psi\) gives
\eqref{eq:damping-integral}.
\end{proof}

\section{The localized flexural virial identity}\label{sec:flex-virial}

The energy identity controls only the damped normal velocity.  Recovering the
full hydroelastic energy requires a multiplier whose elastic contribution has a
positive fourth-order principal part.  The multiplier is identified at the
linearized level, its nonlinear contribution is then computed exactly, and a
quantitative coercive lower bound is obtained.

\subsection{Choice of the elastic multiplier}

The multiplier adapted to the fourth-order elastic energy is
\begin{equation}\label{eq:zeta-rho}
  \zeta
  =
  \partial_x(m\eta)
  +
  \frac52(1-m_x)\eta
  -
  \frac14\eta
  =
  m\eta_x+\left(\frac94-\frac32m_x\right)\eta,
\end{equation}
and
\begin{equation}\label{eq:rho}
  \varrho
  =
  \zeta+\eta-x\eta_x
  =
  (m-x)\eta_x
  +
  \left(\frac{13}{4}-\frac32m_x\right)\eta.
\end{equation}

The following flat-state calculation explains the choice of the coefficient
\(5/2\) in \eqref{eq:zeta-rho}. This value is uniquely determined by the
requirement that the coefficient of the highest-order quadratic term be
independent of the localization derivative \(m_x\). This condition is essential for the coercivity of the bending contribution,
since the localization function \(\chi\) (equivalently \(m_x=1-\chi\))
should enter only through the damping mechanism and lower-order terms, rather
than the principal elastic energy.

\begin{lemma}
\label{lem:unique-flex-multiplier}
For \(r\in\R\), set
\[
  \zeta_r
  =\partial_x(m\eta)+r(1-m_x)\eta-\frac14\eta.
\]
The complete quadratic identity at the flat state is
\begin{align}
  \int_{\T}\Bflex'(0)\eta\,\zeta_r\,\dd x
  &=\int_{\T}
  \left[
    r-\frac14+\left(\frac52-r\right)m_x
  \right]\eta_{xx}^2\,\dd x
  \notag\\
  &\quad
  +(3-2r)\int_{\T}m_{xx}\eta_x\eta_{xx}\,\dd x
  +(1-r)\int_{\T}m_{xxx}\eta\eta_{xx}\,\dd x.
  \label{eq:quadratic-r}
\end{align}
Consequently, \(r=5/2\) is the unique choice for which the coefficient of
the highest-order quadratic term \(\eta_{xx}^2\) is independent of \(m_x\).
With this choice, the coefficient reduces to \(9/4\).
\end{lemma}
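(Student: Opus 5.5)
The plan is to linearize the Cosserat force at the flat state and then integrate by parts directly on the periodic torus; no boundary terms arise. From \eqref{eq:flex-force}, the second term is cubic in \(\eta\) and the factor \((1+\eta_x^2)^{-5/2}\) equals \(1+O(\eta_x^2)\). Hence
\[
  \Bflex'(0)\eta=\partial_x^4\eta .
\]
Next I expand the multiplier as
\[
  \zeta_r=m\eta_x+a\,\eta,
  \qquad
  a:=r-\frac14+(1-r)m_x ,
\]
using \(\partial_x(m\eta)=m_x\eta+m\eta_x\). This gives \(a_x=(1-r)m_{xx}\) and \(a_{xx}=(1-r)m_{xxx}\). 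The pairing \(\int_{\T}\partial_x^4\eta\,\zeta_r\,\dd x\) then splits into a transport part and a multiplication part. In each part I move two derivatives onto the multiplier by two periodic integrations by parts.

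For the transport part,
\[
  \int_{\T}\eta_{xxxx}\,m\eta_x\,\dd x
  =\int_{\T}\eta_{xx}\bigl(m_{xx}\eta_x+2m_x\eta_{xx}+m\eta_{xxx}\bigr)\dd x .
\]
The last term is handled by writing \(\eta_{xx}\eta_{xxx}=\frac12\partial_x(\eta_{xx}^2)\), which gives \(\int m\,\eta_{xx}\eta_{xxx}\,\dd x=-\frac12\int m_x\eta_{xx}^2\,\dd x\). So this part contributes
\[
  \frac32\int_{\T}m_x\eta_{xx}^2\,\dd x+\int_{\T}m_{xx}\eta_x\eta_{xx}\,\dd x .
\]
For the multiplication part, Leibniz gives
\[
  \int_{\T}\eta_{xx}\,(a\eta)_{xx}\,\dd x
  =\int_{\T}a\,\eta_{xx}^2\,\dd x
   +2(1-r)\int_{\T}m_{xx}\eta_x\eta_{xx}\,\dd x
   +(1-r)\int_{\T}m_{xxx}\eta\,\eta_{xx}\,\dd x .
\]

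Adding the two parts, the coefficient of \(\eta_{xx}^2\) is
\[
  \frac32 m_x+r-\frac14+(1-r)m_x=r-\frac14+\left(\frac52-r\right)m_x ,
\]
the coefficient of \(m_{xx}\eta_x\eta_{xx}\) is \(1+2(1-r)=3-2r\), and the \(m_{xxx}\) term carries \(1-r\). This is exactly \eqref{eq:quadratic-r}. Setting \(r=5/2\) leaves the constant coefficient \(\frac52-\frac14=\frac94\).

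The only non-computational point is uniqueness. The coefficient is independent of \(m_x\) precisely when \((\frac52-r)m_x\) is constant on \(\T\). So I need \(m_x\) to be non-constant, which I get from Lemma~\ref{lem:cutoff} in two ways:
\begin{itemize}
  \item \(m_x=1\) on \(|x|\le L-\delta\), while \(\int_{\T}m_x\,\dd x=0\) by periodicity of \(m\), so \(m_x\) must also take values below \(1\);
  \item alternatively, \eqref{eq:M3-positive} shows directly that \(m_x\) is not constant.
\end{itemize}
Therefore \(r=5/2\) is forced. I do not expect any real obstacle here; the only care needed is in tracking the signs in the \(m\eta_{xx}\eta_{xxx}\) step.
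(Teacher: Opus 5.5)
Your proposal is correct and follows essentially the same route as the paper's proof. You linearize to $\partial_x^4\eta$, write $\zeta_r=m\eta_x+a\eta$, and move two derivatives onto the multiplier in each part by periodic integration by parts, handling the $m\eta_{xx}\eta_{xxx}$ term the same way. Your added remark that $m_x$ is non-constant (because $\int_{\T}m_x\,\dd x=0$ while $m_x=1$ on $|x|\le L-\delta$, or alternatively because $M_3>0$) makes explicit the uniqueness step that the paper leaves implicit.
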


\begin{proof}
Since \(\Bflex'(0)\eta=\eta_{xxxx}\), we have
\[
  \zeta_r
  =
  m\eta_x+
  \left(r-\frac14+(1-r)m_x\right)\eta.
\]
For the first part, two periodic integrations by parts give
\[
  \int_{\T}\eta_{xxxx}m\eta_x\,\dd x
  =
  \frac32\int_{\T}m_x\eta_{xx}^2\,\dd x
  +
  \int_{\T}m_{xx}\eta_x\eta_{xx}\,\dd x.
\]
For the second part, applying the same integrations by parts directly to
\(\left(r-\frac14+(1-r)m_x\right)\eta\) yields
\begin{align*}
  &\int_{\T}
  \eta_{xxxx}
  \left(r-\frac14+(1-r)m_x\right)\eta\,\dd x\\
  =&
  \int_{\T}
  \left(r-\frac14+(1-r)m_x\right)\eta_{xx}^2\,\dd x
  +2(1-r) \int_{\T}m_{xx}\eta_x\eta_{xx}\,\dd x
  +(1-r)\int_{\T}m_{xxx}\eta\eta_{xx}\,\dd x.
\end{align*}
Combining the two identities and collecting like terms, we obtain
\eqref{eq:quadratic-r}.  Notice that the coefficient of the
\(m_x\eta_{xx}^2\)-term is \(\frac52-r\).  Hence the dependence of the
principal quadratic term on \(m_x\) disappears precisely for \(r=5/2\), in
which case its coefficient reduces to \(9/4\).
\end{proof}

\subsection{Exact nonlinear elastic identity}

The linear calculation fixes the multiplier.  Its contribution to the full
nonlinear elastic force is given by the following identity.

\begin{proposition}[Exact localized flexural virial formula]
\label{prop:flex-virial}
With \(\zeta\) defined by \eqref{eq:zeta-rho}, then the following holds
\begin{align}
  \int_{\T}\Bflex(\eta)\zeta\,\dd x
  &=
  \int_{\T}
  \frac{\eta_{xx}^2}{(1+\eta_x^2)^{7/2}}
  \left[
    \frac94
    -
    \left(\frac{27}{8}-\frac54m_x\right)\eta_x^2
  \right]\dd x
  \notag\\
  &\quad
  -2\int_{\T}
  m_{xx}
  \frac{\eta_x\eta_{xx}}{(1+\eta_x^2)^{5/2}}\,\dd x
  -\frac32\int_{\T}
  m_{xxx}
  \frac{\eta\eta_{xx}}{(1+\eta_x^2)^{5/2}}\,\dd x
  \notag\\
  &\quad
  +\frac{15}{4}\int_{\T}
  m_{xx}
  \frac{\eta\eta_x\eta_{xx}^2}
       {(1+\eta_x^2)^{7/2}}\,\dd x.
  \label{eq:flex-virial}
\end{align}
\end{proposition}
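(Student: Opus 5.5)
We plan to use the weak form of the force rather than \eqref{eq:flex-force} itself. By the first-variation formula \eqref{eq:first-variation} with \(v=\zeta\), and writing \(w=1+\eta_x^2\), we have
\[
  \int_{\T}\Bflex(\eta)\zeta\,\dd x
  =\int_{\T}\left[\frac{\eta_{xx}}{w^{5/2}}\zeta_{xx}
  -\frac52\frac{\eta_x\eta_{xx}^2}{w^{7/2}}\zeta_x\right]\dd x .
\]
This avoids differentiating the nonlinear coefficients twice. We insert \(\zeta=m\eta_x+a\eta\) with \(a=\frac94-\frac32m_x\), so that \(a_x=-\frac32m_{xx}\) and \(a_{xx}=-\frac32m_{xxx}\). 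Then
\[
  \zeta_x=m\eta_{xx}+(m_x+a)\eta_x+a_x\eta,\qquad
  \zeta_{xx}=m\eta_{xxx}+(2m_x+a)\eta_{xx}+(m_{xx}+2a_x)\eta_x+a_{xx}\eta .
\]
Here \(m_x+a=\frac94-\frac12m_x\), \(2m_x+a=\frac94+\frac12m_x\) and \(m_{xx}+2a_x=-2m_{xx}\).

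The terms without \(m\eta_{xxx}\) and without the factor \(m\) in \(\zeta_x\) are read off directly.
\begin{itemize}
\item From \(\zeta_{xx}\) we get \(\int(\frac94+\frac12m_x)\eta_{xx}^2w^{-5/2}\), the term \(-2\int m_{xx}\eta_x\eta_{xx}w^{-5/2}\), and \(-\frac32\int m_{xxx}\eta\eta_{xx}w^{-5/2}\). The last two already match \eqref{eq:flex-virial}.
\item From \(\zeta_x\) we get \(-\frac52\int(\frac94-\frac12m_x)\eta_x^2\eta_{xx}^2w^{-7/2}\) and \(+\frac{15}{4}\int m_{xx}\eta\eta_x\eta_{xx}^2w^{-7/2}\). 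The latter matches the last term of \eqref{eq:flex-virial}.
\end{itemize}

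The remaining pieces carry the factor \(m\). They are
\[
  I=\int m\,\eta_{xx}\eta_{xxx}w^{-5/2}-\frac52\int m\,\eta_x\eta_{xx}^3w^{-7/2}.
\]
The key observation is that this is an exact derivative:
\[
  \partial_x\!\left(\frac{\eta_{xx}^2}{2w^{5/2}}\right)
  =\frac{\eta_{xx}\eta_{xxx}}{w^{5/2}}-\frac52\frac{\eta_x\eta_{xx}^3}{w^{7/2}} .
\]
This uses \(w_x=2\eta_x\eta_{xx}\). Hence \(I=\int m\,\partial_x(\eta_{xx}^2/(2w^{5/2}))=-\frac12\int m_x\eta_{xx}^2w^{-5/2}\) by periodicity. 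This cancels exactly the \(\frac12m_x\) in the coefficient \(\frac94+\frac12m_x\). This cancellation is the nonlinear counterpart of the choice \(r=5/2\) in Lemma~\ref{lem:unique-flex-multiplier}, and verifying it is the step I regard as the crux.

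Finally we collect the principal terms over the common denominator \(w^{7/2}\). Writing \(\frac94w^{-5/2}=\frac94(1+\eta_x^2)w^{-7/2}\), the \(\eta_x^2\eta_{xx}^2w^{-7/2}\) coefficient becomes
\[
  \frac94-\frac{45}{8}+\frac54m_x=-\left(\frac{27}{8}-\frac54m_x\right).
\]
This gives the bracket in \eqref{eq:flex-virial}. Regularity \(s\ge5\) justifies all integrations by parts, and periodicity removes boundary terms. The only real obstacle is bookkeeping, namely checking the exact-derivative cancellation and the coefficient arithmetic. I would double-check both against the flat case \(\eta_x\to0\), which must reproduce \eqref{eq:quadratic-r} at \(r=5/2\).
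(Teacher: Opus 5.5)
Your proposal is correct and follows essentially the same route as the paper. Both arguments test the weak form \eqref{eq:first-variation} against $\zeta$ and absorb the $m$-weighted terms through the exact derivative $\partial_x\bigl(\eta_{xx}^2/(2(1+\eta_x^2)^{5/2})\bigr)$. The only difference is that the paper treats $m\eta_x$ and $(\frac94-\frac32m_x)\eta$ separately before combining, whereas you expand $\zeta_x$ and $\zeta_{xx}$ at once; your coefficient bookkeeping checks out.
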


\begin{proof}
By \eqref{eq:zeta-rho}, we have
\[
  \zeta
  =
  m\eta_x+
  \left(\frac94-\frac32m_x\right)\eta.
\]
We evaluate these two contributions separately, always using
\eqref{eq:first-variation} in its original \(\eta\)-variables.

For \(v=m\eta_x\),
\[
  v_x=m_x\eta_x+m\eta_{xx},
  \qquad
  v_{xx}=m_{xx}\eta_x+2m_x\eta_{xx}+m\eta_{xxx}.
\]
Therefore
\begin{align*}
  \int_{\T}\Bflex(\eta)m\eta_x\,\dd x
  &=
  \int_{\T}\left[
    -\frac52m_x
    \frac{\eta_x^2\eta_{xx}^2}{(1+\eta_x^2)^{7/2}}
    -\frac52m
    \frac{\eta_x\eta_{xx}^3}{(1+\eta_x^2)^{7/2}}
  \right]\dd x
  \\
  &\quad
  +\int_{\T}\left[
    m_{xx}\frac{\eta_x\eta_{xx}}{(1+\eta_x^2)^{5/2}}
    +2m_x\frac{\eta_{xx}^2}{(1+\eta_x^2)^{5/2}}
    +m\frac{\eta_{xx}\eta_{xxx}}{(1+\eta_x^2)^{5/2}}
  \right]\dd x.
\end{align*}
The two terms containing \(m\) combine because
\[
  \partial_x\left(
    \frac12
    \frac{\eta_{xx}^2}{(1+\eta_x^2)^{5/2}}
  \right)
  =
  \frac{\eta_{xx}\eta_{xxx}}{(1+\eta_x^2)^{5/2}}
  -\frac52
  \frac{\eta_x\eta_{xx}^3}{(1+\eta_x^2)^{7/2}}.
\]
Integrating this total derivative by parts, we obtain
\begin{align}
  \int_{\T}\Bflex(\eta)m\eta_x\,\dd x
  &=
  \int_{\T}
  m_x
  \frac{\eta_{xx}^2}{(1+\eta_x^2)^{7/2}}
  \left(\frac32-\eta_x^2\right)\dd x
  +\int_{\T}
  m_{xx}
  \frac{\eta_x\eta_{xx}}{(1+\eta_x^2)^{5/2}}\,\dd x.
  \label{eq:m-etax-part}
\end{align}

For
\[
  v=\left(\frac94-\frac32m_x\right)\eta,
\]
direct differentiation gives
\begin{align*}
  v_x
  &=
  \left(\frac94-\frac32m_x\right)\eta_x
  -\frac32m_{xx}\eta,\\
  v_{xx}
  &=
  \left(\frac94-\frac32m_x\right)\eta_{xx}
  -3m_{xx}\eta_x
  -\frac32m_{xxx}\eta.
\end{align*}
Substituting the above into \eqref{eq:first-variation} yields
\begin{align}
  \int_{\T}
  \Bflex(\eta)
  \left(\frac94-\frac32m_x\right)\eta\,\dd x
  \notag&=
  \int_{\T}
  \left(\frac94-\frac32m_x\right)
  \frac{\eta_{xx}^2}{(1+\eta_x^2)^{7/2}}
  \left(1-\frac32\eta_x^2\right)\dd x
  \notag\\
  &\qquad
  -3\int_{\T}
  m_{xx}
  \frac{\eta_x\eta_{xx}}{(1+\eta_x^2)^{5/2}}\,\dd x
  -\frac32\int_{\T}
  m_{xxx}
  \frac{\eta\eta_{xx}}{(1+\eta_x^2)^{5/2}}\,\dd x
  \notag\\
  &\qquad
  +\frac{15}{4}\int_{\T}
  m_{xx}
  \frac{\eta\eta_x\eta_{xx}^2}
       {(1+\eta_x^2)^{7/2}}\,\dd x.
  \label{eq:weighted-eta-part}
\end{align}
Combining \eqref{eq:m-etax-part} and
\eqref{eq:weighted-eta-part}, note first that the
\(m_{xx}\eta_x\eta_{xx}\)-terms reduce to
\[
  -2\int_{\T}
  m_{xx}\frac{\eta_x\eta_{xx}}
  {(1+\eta_x^2)^{5/2}}\,\dd x.
\]
The remaining principal terms simplify according to
\begin{align*}
  m_x\left(\frac32-\eta_x^2\right)
  +
  \left(\frac94-\frac32m_x\right)
  \left(1-\frac32\eta_x^2\right)
  =
  \frac94-
  \left(\frac{27}{8}-\frac54m_x\right)\eta_x^2.
\end{align*}
Substituting above two simplifications proves \eqref{eq:flex-virial}.
\end{proof}

\subsection{Coercivity of the elastic contribution}

The exact identity contains three lower-order terms involving derivatives of
\(m\).  The following estimate quantifies their effect and gives a precise
condition under which the fourth-order principal term remains positive.

\begin{lemma}[Coercivity of the flexural virial form]
\label{lem:flex-coercivity}
Set
\[
  M_1=\|m_x\|_{L^\infty(\T)},
  \qquad
  M_2=\|m_{xx}\|_{L^\infty(\T)},
  \qquad
  M_3=\|m_{xxx}\|_{L^\infty(\T)},
  \qquad
  K_1=\frac{27}{8}+\frac54M_1.
\]
Assume
\[
  \|\eta_x\|_{L^\infty}\leq\varepsilon_1,
  \qquad
  \|\eta\|_{L^\infty}\leq\varepsilon_0,
  \qquad
  \frac94-K_1\varepsilon_1^2\geq0.
\]
For arbitrary \(d_1,d_2>0\), define
\begin{align}
  c_{\mathrm{fl}}
  &:={}
  \frac{\frac94-K_1\varepsilon_1^2}{1+\varepsilon_1^2}
  -\frac{M_3d_1}{2}(1+\varepsilon_1^2)^{5/2}
  -\frac{3M_3d_2}{4}
  -\frac{15M_2}{4}\varepsilon_0\varepsilon_1,
  \label{eq:cfl}\\
  C_{\mathrm{fl}}
  &:={}
  \frac{M_3}{2d_1}+\frac{3M_3}{4d_2}.
  \label{eq:Cfl}
\end{align}
Then
\begin{equation}\label{eq:flex-coercivity}
  \int_{\T}\Bflex(\eta)\zeta\,\dd x
  \geq
  c_{\mathrm{fl}}
  \int_{\T}\frac{\eta_{xx}^2}{(1+\eta_x^2)^{5/2}}\,\dd x
  -C_{\mathrm{fl}}\int_{\T}\eta^2\,\dd x.
\end{equation}
\end{lemma}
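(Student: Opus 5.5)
Starting from the exact identity \eqref{eq:flex-virial} of Proposition~\ref{prop:flex-virial}, I bound each of the four integrals from below in terms of the weighted bending density
\[
  w:=\frac{\eta_{xx}^2}{(1+\eta_x^2)^{5/2}}
\]
and of \(\eta^2\). The terms are handled in turn as follows.

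\emph{Principal term.} Since \(|m_x|\leq M_1\), the bracket satisfies
\[
  \frac94-\Bigl(\frac{27}{8}-\frac54m_x\Bigr)\eta_x^2
  \geq\frac94-K_1\eta_x^2
  \geq\frac94-K_1\varepsilon_1^2\geq0 .
\]
Moreover \((1+\eta_x^2)^{-7/2}=w\,\eta_{xx}^{-2}(1+\eta_x^2)^{-1}\) and \((1+\eta_x^2)^{-1}\geq(1+\varepsilon_1^2)^{-1}\). Because the bracket is nonnegative, these two facts give the lower bound
\[
  \frac{\frac94-K_1\varepsilon_1^2}{1+\varepsilon_1^2}\int_{\T}w\,\dd x .
\]

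\emph{Last term.} The quartic term containing \(\frac{15}{4}m_{xx}\) is bounded pointwise by
\[
  \frac{15}{4}M_2\varepsilon_0\varepsilon_1\,\frac{\eta_{xx}^2}{(1+\eta_x^2)^{7/2}}
  \leq\frac{15}{4}M_2\varepsilon_0\varepsilon_1\,w .
\]
This produces the last loss in \(c_{\mathrm{fl}}\).

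\emph{The \(m_{xxx}\) term.} I write it as
\[
  -\frac32\int_{\T}m_{xxx}\,\eta\,\eta_{xx}(1+\eta_x^2)^{-5/2}\,\dd x .
\]
By Young's inequality with parameter \(d_2\),
\[
  \frac32|\eta\,\eta_{xx}|(1+\eta_x^2)^{-5/2}
  \leq\frac34\Bigl(d_2\,w+\frac{1}{d_2}\,\eta^2(1+\eta_x^2)^{-5/2}\Bigr)
  \leq\frac34 d_2\,w+\frac{3}{4d_2}\,\eta^2 .
\]
Multiplying by \(M_3\), this term contributes the losses \(\frac{3M_3d_2}{4}\) in \(c_{\mathrm{fl}}\) and \(\frac{3M_3}{4d_2}\) in \(C_{\mathrm{fl}}\).

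\emph{The \(m_{xx}\eta_x\eta_{xx}\) term (main obstacle).} The constants in \eqref{eq:cfl}--\eqref{eq:Cfl} involve only \(M_3\) here, not \(M_2\). So I would first integrate by parts to move the derivative onto \(m_{xx}\). Let \(F\) be the primitive of \(s\mapsto s(1+s^2)^{-5/2}\), namely
\[
  F(s)=-\tfrac13(1+s^2)^{-3/2}+\tfrac13 .
\]
Then
\[
  \frac{\eta_x\eta_{xx}}{(1+\eta_x^2)^{5/2}}=\partial_x F(\eta_x),
\]
and periodicity gives
\[
  -2\int_{\T}m_{xx}\,\partial_xF(\eta_x)\,\dd x
  =2\int_{\T}m_{xxx}F(\eta_x)\,\dd x .
\]
This is only first order in \(\eta_x\) after one more manipulation. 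I bound \(0\leq F(s)\leq\frac12 s^2\), so the term is at least \(-M_3\int_{\T}\eta_x^2\,\dd x\). Integrating by parts once more,
\[
  \int_{\T}\eta_x^2\,\dd x=-\int_{\T}\eta\,\eta_{xx}\,\dd x .
\]
Writing \(\eta_{xx}=w^{1/2}(1+\eta_x^2)^{5/4}\) and applying Young with parameter \(d_1\) gives
\[
  M_3\int_{\T}|\eta\,\eta_{xx}|\,\dd x
  \leq\frac{M_3d_1}{2}(1+\varepsilon_1^2)^{5/2}\int_{\T}w\,\dd x
  +\frac{M_3}{2d_1}\int_{\T}\eta^2\,\dd x .
\]
This matches exactly the form of the \(d_1\) losses in \eqref{eq:cfl}--\eqref{eq:Cfl}, which confirms the route. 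The delicate point is the sign and size of \(F\), so that the factor is \(M_3\) rather than \(2M_3\). I would verify that \(F\geq0\) and \(F(s)\leq s^2/2\) (indeed \(1-(1+s^2)^{-3/2}\leq\frac32 s^2\)), which yields \(2|F|\leq s^2\).

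Summing the four lower bounds gives \eqref{eq:flex-coercivity}.
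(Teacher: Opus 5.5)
Your proposal is correct and follows the paper's proof essentially step by step. It uses the same four-term lower bound from \eqref{eq:flex-virial} and the same Young splittings with $d_1$ and $d_2$. It also uses the same integration by parts, moving the derivative onto $m_{xx}$ and then applying $\int_{\T}\eta_x^2\,\dd x=-\int_{\T}\eta\,\eta_{xx}\,\dd x$. The only cosmetic difference is that you normalize the primitive $F$ to vanish at $0$, whereas the paper integrates $(1+\eta_x^2)^{-3/2}$ and then subtracts $1$ using $\int_{\T}m_{xxx}\,\dd x=0$; the two are equivalent.
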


\begin{proof}
We estimate the four terms on the right-hand side of
\eqref{eq:flex-virial} directly in the variables \(\eta_x\) and
\(\eta_{xx}\).

For the principal integral, the bounds
\(|m_x|\leq M_1\) and
\(\|\eta_x\|_{L^\infty}\leq\varepsilon_1\) imply
\[
  \frac94-
  \left(\frac{27}{8}-\frac54m_x\right)\eta_x^2
  \geq
  \frac94-K_1\varepsilon_1^2.
\]
Moreover, we have
\[
  1+\eta_x^2\leq1+\varepsilon_1^2,
\]
and hence
\[
  \frac1{(1+\eta_x^2)^{7/2}}
  \geq
  \frac1{1+\varepsilon_1^2}
  \frac1{(1+\eta_x^2)^{5/2}}.
\]
Since \(\frac94-K_1\varepsilon_1^2\geq0\), it follows that
\[
  \int_{\T}
  \frac{\eta_{xx}^2}{(1+\eta_x^2)^{7/2}}
  \left[
    \frac94-
    \left(\frac{27}{8}-\frac54m_x\right)\eta_x^2
  \right]\dd x
  \geq
  \frac{\frac94-K_1\varepsilon_1^2}{1+\varepsilon_1^2}
  \int_{\T}
  \frac{\eta_{xx}^2}{(1+\eta_x^2)^{5/2}}\,\dd x.
\]

For the second term in \eqref{eq:flex-virial}, observe that
\[
  \partial_x\left((1+\eta_x^2)^{-3/2}\right)
  =
  -3\frac{\eta_x\eta_{xx}}{(1+\eta_x^2)^{5/2}}.
\]
Therefore, we obtain
\begin{align*}
  -2\int_{\T}
  m_{xx}
  \frac{\eta_x\eta_{xx}}{(1+\eta_x^2)^{5/2}}\,\dd x
  &=
  \frac23\int_{\T}
  m_{xx}
  \partial_x\left((1+\eta_x^2)^{-3/2}\right)\dd x
  \\
  &=
  -\frac23\int_{\T}
  m_{xxx}(1+\eta_x^2)^{-3/2}\,\dd x
  \\
  &=
  -\frac23\int_{\T}
  m_{xxx}
  \left[(1+\eta_x^2)^{-3/2}-1\right]\dd x,
\end{align*}
where the last equality uses
\(\int_{\T}m_{xxx}\,\dd x=0\).  Since
\(z\mapsto(1+z)^{-3/2}\) has derivative bounded in absolute value by
\(3/2\) for \(z\geq0\), the mean value theorem gives
\[
  \left|(1+\eta_x^2)^{-3/2}-1\right|
  \leq\frac32\eta_x^2.
\]
Consequently, we have
\[
  \left|
  -2\int_{\T}
  m_{xx}
  \frac{\eta_x\eta_{xx}}{(1+\eta_x^2)^{5/2}}\,\dd x
  \right|
  \leq
  M_3\int_{\T}\eta_x^2\,\dd x.
\]
By periodicity, it has
\[
  \int_{\T}\eta_x^2\,\dd x
  =
  -\int_{\T}\eta\eta_{xx}\,\dd x.
\]
Using Young's inequality and
\[
  \eta_{xx}^2
  \leq
  (1+\varepsilon_1^2)^{5/2}
  \frac{\eta_{xx}^2}{(1+\eta_x^2)^{5/2}},
\]
we obtain
\begin{align*}
  -2\int_{\T}
  m_{xx}
  \frac{\eta_x\eta_{xx}}{(1+\eta_x^2)^{5/2}}\,\dd x
  &\geq
  -\frac{M_3d_1}{2}(1+\varepsilon_1^2)^{5/2}
  \int_{\T}
  \frac{\eta_{xx}^2}{(1+\eta_x^2)^{5/2}}\,\dd x
  -\frac{M_3}{2d_1}\int_{\T}\eta^2\,\dd x.
\end{align*}

For the third term in \eqref{eq:flex-virial}, weighted Young's inequality
gives
\begin{align*}
  \left|
  -\frac32\int_{\T}
  m_{xxx}
  \frac{\eta\eta_{xx}}{(1+\eta_x^2)^{5/2}}\,\dd x
  \right|
  &\leq
  \frac32M_3
  \int_{\T}
  \left(
    \frac{|\eta_{xx}|}{(1+\eta_x^2)^{5/4}}
  \right)
  \left(
    \frac{|\eta|}{(1+\eta_x^2)^{5/4}}
  \right)\dd x
  \\
  &\leq
  \frac{3M_3d_2}{4}
  \int_{\T}
  \frac{\eta_{xx}^2}{(1+\eta_x^2)^{5/2}}\,\dd x
  +
  \frac{3M_3}{4d_2}
  \int_{\T}
  \frac{\eta^2}{(1+\eta_x^2)^{5/2}}\,\dd x
  \\
  &\leq
  \frac{3M_3d_2}{4}
  \int_{\T}
  \frac{\eta_{xx}^2}{(1+\eta_x^2)^{5/2}}\,\dd x
  +
  \frac{3M_3}{4d_2}
  \int_{\T}\eta^2\,\dd x.
\end{align*}

Finally, we have
\[
  |\eta\eta_x|
  \leq\varepsilon_0\varepsilon_1,
  \qquad
  \frac1{(1+\eta_x^2)^{7/2}}
  \leq
  \frac1{(1+\eta_x^2)^{5/2}},
\]
so the last term in \eqref{eq:flex-virial} satisfies
\[
  \frac{15}{4}\int_{\T}
  m_{xx}
  \frac{\eta\eta_x\eta_{xx}^2}
       {(1+\eta_x^2)^{7/2}}\,\dd x
  \geq
  -\frac{15M_2}{4}\varepsilon_0\varepsilon_1
  \int_{\T}
  \frac{\eta_{xx}^2}{(1+\eta_x^2)^{5/2}}\,\dd x.
\]
Combining the lower bounds obtained for the four terms on the right-hand side of
\eqref{eq:flex-virial} yields \eqref{eq:flex-coercivity}, with \(c_{\mathrm{fl}}\) and
\(C_{\mathrm{fl}}\) given by \eqref{eq:cfl}--\eqref{eq:Cfl}.
\end{proof}
The constants in Lemma~\ref{lem:flex-coercivity} still contain the two free Young parameters
\(d_1\) and \(d_2\).  We now choose them so that, for a prescribed loss
\(\theta R_{\mathrm{fl}}\) in the principal flexural coefficient, the
lower-order constant \(C_{\mathrm{fl}}\) is minimized.
\begin{proposition}
\label{prop:flex-parameter-feasibility}
With the notation of Lemma~\ref{lem:flex-coercivity}, set
\begin{equation}\label{eq:Rfl}
  R_{\mathrm{fl}}
  :=
  \frac{\frac94-K_1\varepsilon_1^2}{1+\varepsilon_1^2}
  -\frac{15M_2}{4}\varepsilon_0\varepsilon_1.
\end{equation}
By \eqref{eq:M3-positive}, \(M_3>0\).  Assume \(R_{\mathrm{fl}}>0\), and fix
\(\theta\in(0,1)\).  Define
\begin{equation}\label{eq:KY}
  K_Y(\varepsilon_1)
  :=M_3\left(
      \frac12(1+\varepsilon_1^2)^{5/4}+\frac34
    \right)
\end{equation}
and choose
\begin{equation}\label{eq:optimized-d1-d2}
  d_1
  =\frac{\theta R_{\mathrm{fl}}}
  {K_Y(\varepsilon_1)(1+\varepsilon_1^2)^{5/4}},
  \qquad
  d_2
  =\frac{\theta R_{\mathrm{fl}}}{K_Y(\varepsilon_1)}.
\end{equation}
Then the constants in Lemma~\ref{lem:flex-coercivity} are
\begin{equation}\label{eq:optimized-cfl-Cfl}
  c_{\mathrm{fl}}=(1-\theta)R_{\mathrm{fl}},
  \qquad
  C_{\mathrm{fl}}
  =\frac{K_Y(\varepsilon_1)^2}{\theta R_{\mathrm{fl}}}.
\end{equation}
Moreover, among all \(d_1,d_2>0\) for which the total Young loss in
\eqref{eq:cfl} equals \(\theta R_{\mathrm{fl}}\), namely
\begin{equation}\label{eq:fixed-young-loss}
  \frac{M_3d_1}{2}(1+\varepsilon_1^2)^{5/2}
  +\frac{3M_3d_2}{4}
  =\theta R_{\mathrm{fl}},
\end{equation}
the choice \eqref{eq:optimized-d1-d2} minimizes \(C_{\mathrm{fl}}\).
\end{proposition}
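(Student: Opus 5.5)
The plan is to treat the two Young parameters as the only free variables and reduce everything to one Cauchy--Schwarz inequality. Set
\[
  s:=(1+\varepsilon_1^2)^{5/4},
  \qquad
  a:=\frac{M_3}{2},
  \qquad
  b:=\frac{3M_3}{4},
\]
so that $K_Y(\varepsilon_1)=as+b$ by \eqref{eq:KY}, and $K_Y>0$ because $M_3>0$ by \eqref{eq:M3-positive}. In this notation \eqref{eq:cfl}--\eqref{eq:Cfl} read
\[
  c_{\mathrm{fl}}=R_{\mathrm{fl}}-\bigl(as^2d_1+bd_2\bigr),
  \qquad
  C_{\mathrm{fl}}=\frac{a}{d_1}+\frac{b}{d_2},
\]
where $R_{\mathrm{fl}}$ is defined in \eqref{eq:Rfl}. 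Thus the Young loss is $\ell(d_1,d_2):=as^2d_1+bd_2$, and the whole proposition is a statement about this linear loss and the reciprocal quantity $C_{\mathrm{fl}}$.

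First I verify \eqref{eq:optimized-cfl-Cfl} by substitution. The choice \eqref{eq:optimized-d1-d2} is $d_2=\theta R_{\mathrm{fl}}/K_Y$ and $d_1=d_2/s$, and both are positive since $R_{\mathrm{fl}}>0$ and $\theta\in(0,1)$. Then
\[
  \ell=\frac{\theta R_{\mathrm{fl}}}{K_Y}(as+b)=\theta R_{\mathrm{fl}},
\]
so $c_{\mathrm{fl}}=(1-\theta)R_{\mathrm{fl}}$ and \eqref{eq:fixed-young-loss} holds. Similarly,
\[
  C_{\mathrm{fl}}=\frac{K_Y}{\theta R_{\mathrm{fl}}}(as+b)=\frac{K_Y^2}{\theta R_{\mathrm{fl}}}.
\]

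Second, I prove minimality. For any $d_1,d_2>0$, Cauchy--Schwarz applied to the vectors $\bigl(\sqrt{a}\,s\sqrt{d_1},\sqrt{b}\sqrt{d_2}\bigr)$ and $\bigl(\sqrt{a/d_1},\sqrt{b/d_2}\bigr)$ gives
\[
  \ell(d_1,d_2)\,C_{\mathrm{fl}}(d_1,d_2)\ \ge\ (as+b)^2=K_Y^2 .
\]
Under the constraint $\ell=\theta R_{\mathrm{fl}}$ this yields $C_{\mathrm{fl}}\ge K_Y^2/(\theta R_{\mathrm{fl}})$, and the first step shows this value is attained. Equality in Cauchy--Schwarz forces proportionality of the two vectors, which gives $s\,d_1=d_2$. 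Combined with the constraint, this determines $(d_1,d_2)$ uniquely as \eqref{eq:optimized-d1-d2}. A Lagrange multiplier computation would give the same answer, but Cauchy--Schwarz avoids any convexity argument.

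There is no genuine obstacle here. The only point needing care is bookkeeping: the factor $(1+\varepsilon_1^2)^{5/2}$ in \eqref{eq:cfl} must be written as $s^2$, so that the optimal weight $s=(1+\varepsilon_1^2)^{5/4}$ appears in both $K_Y$ and $d_1$. I would also note that Lemma~\ref{lem:flex-coercivity} applies for arbitrary $d_1,d_2>0$, so substituting the chosen values is legitimate.
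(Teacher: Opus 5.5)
Your proposal is correct and takes the same route as the paper. You substitute \eqref{eq:optimized-d1-d2} to obtain $c_{\mathrm{fl}}=(1-\theta)R_{\mathrm{fl}}$ and $C_{\mathrm{fl}}=K_Y(\varepsilon_1)^2/(\theta R_{\mathrm{fl}})$, then use the Cauchy--Schwarz bound (loss)$\times C_{\mathrm{fl}}\geq K_Y(\varepsilon_1)^2$, with equality exactly when $d_2=(1+\varepsilon_1^2)^{5/4}d_1$.
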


\begin{proof}
Substituting \eqref{eq:optimized-d1-d2} into the two Young-loss terms in
\eqref{eq:cfl} gives
\begin{align*}
  \frac{M_3d_1}{2}(1+\varepsilon_1^2)^{5/2}
  +\frac{3M_3d_2}{4}
 =
  \theta R_{\mathrm{fl}}
  \frac{
    \frac{M_3}{2}(1+\varepsilon_1^2)^{5/4}
    +\frac{3M_3}{4}
  }{
    K_Y(\varepsilon_1)
  }
  =
  \theta R_{\mathrm{fl}},
\end{align*}
Indeed, by \eqref{eq:KY}, the numerator in the middle fraction is
\(K_Y(\varepsilon_1)\).  Therefore \eqref{eq:cfl} yields
\[
  c_{\mathrm{fl}}
  =
  R_{\mathrm{fl}}-\theta R_{\mathrm{fl}}
  =
  (1-\theta)R_{\mathrm{fl}}.
\]
Substituting \eqref{eq:optimized-d1-d2} into the definition
\eqref{eq:Cfl} gives
\begin{align*}
  C_{\mathrm{fl}}
  &=
  \frac{M_3}{2d_1}+\frac{3M_3}{4d_2}
  \\
  &=
  \frac{K_Y(\varepsilon_1)}{\theta R_{\mathrm{fl}}}
  \left[
    \frac{M_3}{2}(1+\varepsilon_1^2)^{5/4}
    +\frac{3M_3}{4}
  \right]
  \\
  &=
  \frac{K_Y(\varepsilon_1)^2}{\theta R_{\mathrm{fl}}},
\end{align*}
Here the bracket in the second line is \(K_Y(\varepsilon_1)\) by
\eqref{eq:KY}.  This proves the second identity in
\eqref{eq:optimized-cfl-Cfl}.

To verify optimality, let \(d_1,d_2>0\) satisfy
\eqref{eq:fixed-young-loss}.  The Cauchy--Schwarz inequality gives
\begin{equation}\label{eq:young-parameter-cauchy-schwarz}
\begin{aligned}
  \left[
    \frac{M_3d_1}{2}(1+\varepsilon_1^2)^{5/2}
    +\frac{3M_3d_2}{4}
  \right]
  \left[
    \frac{M_3}{2d_1}
    +\frac{3M_3}{4d_2}
  \right]
\geq
  \left[
    \frac{M_3}{2}(1+\varepsilon_1^2)^{5/4}
    +\frac{3M_3}{4}
  \right]^2
  =
  K_Y(\varepsilon_1)^2.
\end{aligned}
\end{equation}
By \eqref{eq:fixed-young-loss}, the first factor on the left-hand side of
\eqref{eq:young-parameter-cauchy-schwarz} is \(\theta R_{\mathrm{fl}}\),
whereas the second factor is \(C_{\mathrm{fl}}\) by \eqref{eq:Cfl}.
Therefore
\[
  C_{\mathrm{fl}}
  \geq
  \frac{K_Y(\varepsilon_1)^2}
       {\theta R_{\mathrm{fl}}}.
\]
Equality in \eqref{eq:young-parameter-cauchy-schwarz} holds precisely when
\[
  d_2=(1+\varepsilon_1^2)^{5/4}d_1.
\]
Combining this equality condition with \eqref{eq:fixed-young-loss} gives
\eqref{eq:optimized-d1-d2}.  Hence the choice
\eqref{eq:optimized-d1-d2} minimizes \(C_{\mathrm{fl}}\) among all pairs
satisfying the prescribed total Young loss \eqref{eq:fixed-young-loss}.
\end{proof}

At the flat interface, the optimized splitting gives an explicit criterion
for the simultaneous positivity of the flexural and gravitational
coefficients.

\begin{remark}
	\label{rem:flat-threshold-optimal}
	Set \(\varepsilon_0=\varepsilon_1=0\).  Then
	\[
	R_{\mathrm{fl}}=\frac94,
	\qquad
	K_Y(0)=\frac54M_3.
	\]
	Hence Proposition~\ref{prop:flex-parameter-feasibility} gives
	\[
	c_{\mathrm{fl}}
	=\frac94(1-\theta),
	\qquad
	C_{\mathrm{fl}}
	=\frac{25M_3^2}{36\theta},
	\qquad 0<\theta<1.
	\]
	The two coefficients required in the coercivity estimate are positive
	precisely when
	\[
	c_{\mathrm{fl}}>0,
	\qquad
	\frac g4-\beta C_{\mathrm{fl}}>0.
	\]
	The first condition is equivalent to \(\theta<1\), while the second is
	equivalent to
	\[
	\theta>\frac{25\beta M_3^2}{9g}.
	\]
	Therefore such a choice of \(\theta\in(0,1)\) exists if and only if
	\[
	g>\frac{25}{9}\beta M_3^2.
	\]
	Thus \(25/9\) is the exact flat-interface threshold produced by the optimized
	two-parameter Young splitting of
	Proposition~\ref{prop:flex-parameter-feasibility}.  It is a threshold for
	this coercivity argument, rather than a stability threshold for the
	hydroelastic system itself.
\end{remark}

\section{Fluid multiplier identities}\label{sec:fluid-identities}

We begin by fixing the notation and symmetry properties used in the
multiplier identities of this section.  Set
\[
\mathcal H_g(t)
=
\frac12\int_{\T}\psi G(\eta)\psi\,\dd x
+\frac g2\int_{\T}\eta^2\,\dd x,
\]
and recall that
\[
\zeta
=
\partial_x(m\eta)+\frac52\chi\eta-\frac14\eta,
\qquad
\varrho
=
\zeta+\eta-x\eta_x.
\]
Whenever the coordinate \(x\) appears explicitly, it is understood on the
representative cell \([-L,L]\).  The function \(\zeta\) is \(2L\)-periodic,
whereas \(\varrho\), because of the term \(x\eta_x\), is regarded as a smooth
function on the closed cell \([-L,L]\).

The even symmetry of \(\eta\) and \(\psi\) is inherited by the harmonic
extension.  Indeed, the reflected function
\[
(x,y)\longmapsto\phi(-x,y)
\]
is harmonic in the same fluid domain, has the same Dirichlet trace on the
free surface, and belongs to the same finite-energy class.  Uniqueness of the
finite-energy Dirichlet problem therefore yields
\[
\phi(-x,y)=\phi(x,y).
\]
Hence \(\phi_x\) is odd, and
\[
\phi_x(0,y)=\phi_x(\pm L,y)=0,
\]
where the identities at \(x=\pm L\) also use the \(2L\)-periodicity of the
flow.  Moreover,
\[
G(\eta)\psi=\phi_y-\eta_x\phi_x
\]
is even.  These symmetry properties will be used in the boundary
cancellations arising in the Pohozaev and weighted stress identities.

Define the nonnegative boundary contribution
\begin{equation}\label{eq:Sigma-infty}
  \Sigma_\infty(t)
  =L\int_{-\infty}^{\eta(t,L)}\phi_y(t,L,y)^2\,\dd y\geq0.
\end{equation}

\subsection{Preliminary multiplier identities}

We first derive the identities needed to reorganize the gravitational energy
in the multiplier argument.  Combining the basic multiplier relation with a
weighted equipartition formula yields the intermediate gravity identity
\eqref{eq:intermediate-gravity}, which provides the starting point for the
Pohozaev reduction for the infinite-depth domain in Lemma~\ref{lem:pohozaev}.

\begin{lemma}[Basic multiplier identity]\label{lem:basic-multiplier}
Assume
\begin{equation}\label{eq:forced-gravity-system}
  \eta_t=G(\eta)\psi,
  \qquad
  \psi_t+g\eta+N(\eta)\psi=-F.
\end{equation}
Let
\(\Theta=-\eta\psi_t-\frac g2\eta^2\) and
\[
  R_a=\int_0^T\int_{\T}
  \left[G(\eta)\psi\,m\psi_x+N(\eta)\psi\,m\eta_x\right]
  \dd x\dd t.
\]
Then
\begin{align}
  \int_0^T\int_{\T}m_x\Theta\,\dd x\dd t+R_a
  &=-\left[\int_{\T}\partial_x(m\eta)\psi\,\dd x\right]_0^T
  -\int_0^T\int_{\T}Fm\eta_x\,\dd x\dd t.
  \label{eq:basic-multiplier}
\end{align}
\end{lemma}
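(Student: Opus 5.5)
The plan is to differentiate in time the pairing $I(t)=\int_{\T}\partial_x(m\eta)\,\psi\,\dd x$, substitute the two equations of \eqref{eq:forced-gravity-system}, and then integrate over $[0,T]$. Because $m$ is a smooth $2L$-periodic function by Lemma~\ref{lem:cutoff}, every integration by parts on $\T$ is free of boundary terms; no use of $x$ itself, of evenness, or of the harmonic extension is needed. The regularity in \eqref{eq:theorem-regularity}, with $s\geq5$, makes $t\mapsto I(t)$ of class $C^1$ and justifies differentiating under the integral sign. This is because $\eta\in C^1([0,T];H^{s-1})$ and $\psi\in C^1([0,T];H^{s-5/2})$, so all products involved lie in $C([0,T];L^1(\T))$.

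\textbf{Step 1 (the $\eta_t$ term).} Write
\[
  \frac{\dd}{\dd t}I=\int_{\T}\partial_x(m\eta_t)\psi\,\dd x+\int_{\T}\partial_x(m\eta)\psi_t\,\dd x.
\]
In the first integral, one periodic integration by parts gives $-\int_{\T}m\eta_t\psi_x\,\dd x$. The kinematic equation $\eta_t=G(\eta)\psi$ then turns this into $-\int_{\T}G(\eta)\psi\,m\psi_x\,\dd x$, which is minus the first part of the integrand of $R_a$.

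\textbf{Step 2 (the $\psi_t$ term).} Split $\partial_x(m\eta)=m_x\eta+m\eta_x$. The piece $\int_{\T}m_x\eta\psi_t\,\dd x$ is kept as it is. In the piece $\int_{\T}m\eta_x\psi_t\,\dd x$, substitute $\psi_t=-g\eta-N(\eta)\psi-F$. This produces three contributions:
\begin{itemize}[leftmargin=2em]
\item the gravity part $-g\int_{\T}m\,\partial_x(\eta^2/2)\,\dd x=\frac g2\int_{\T}m_x\eta^2\,\dd x$, after one more periodic integration by parts;
\item the term $-\int_{\T}N(\eta)\psi\,m\eta_x\,\dd x$, which is minus the second part of the integrand of $R_a$;
\item the forcing term $-\int_{\T}Fm\eta_x\,\dd x$.
\end{itemize}
Grouping $\int_{\T}m_x\bigl(\eta\psi_t+\frac g2\eta^2\bigr)\,\dd x=-\int_{\T}m_x\Theta\,\dd x$ gives
\[
  \frac{\dd}{\dd t}I=-\int_{\T}m_x\Theta\,\dd x-\int_{\T}\bigl[G(\eta)\psi\,m\psi_x+N(\eta)\psi\,m\eta_x\bigr]\dd x-\int_{\T}Fm\eta_x\,\dd x.
\]

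\textbf{Step 3.} Integrating this identity from $0$ to $T$ and moving $\int_0^T\int_{\T}m_x\Theta\,\dd x\dd t$ and $R_a$ to the left-hand side yields \eqref{eq:basic-multiplier}.

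The computation itself is elementary. The only point requiring care is the justification of the time differentiation and of the integrations by parts at the stated regularity; this is where \eqref{eq:theorem-regularity} is invoked. The sign bookkeeping between $\Theta$ and the gravity term is the other place where an error could enter, and it should be checked explicitly.
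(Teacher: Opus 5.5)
Your proof is correct and is essentially the paper's argument. The paper computes $\mathcal A=\int_0^T\int_{\T}(\eta_t m\psi_x-\psi_t m\eta_x)\,\dd x\dd t$ in two ways: once by integrating by parts in time, and once by substituting the equations. That is the same as your direct differentiation of $I(t)=\int_{\T}\partial_x(m\eta)\psi\,\dd x$, reorganized, and your signs for $\Theta$ and the gravity term $\frac g2\int_{\T}m_x\eta^2\,\dd x$ are right.
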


\begin{proof}
	Set
	\[
	\mathcal A
	:=
	\int_0^T\int_{\T}
	\left(
	\eta_t m\psi_x-\psi_t m\eta_x
	\right)\,\dd x\dd t.
	\]
	We first rewrite \(\mathcal A\) by integration by parts.  Since \(m\) is
	independent of time,
	\begin{align*}
		\int_0^T\int_{\T}\eta_t m\psi_x\,\dd x\dd t
		&=
		\left[
		\int_{\T}m\eta\psi_x\,\dd x
		\right]_0^T
		-
		\int_0^T\int_{\T}m\eta\psi_{xt}\,\dd x\dd t.
	\end{align*}
	Using periodicity to integrate both terms in \(x\), we obtain
	\begin{align*}
		\left[
		\int_{\T}m\eta\psi_x\,\dd x
		\right]_0^T
		&=
		-\left[
		\int_{\T}\partial_x(m\eta)\psi\,\dd x
		\right]_0^T,
	\end{align*}
	and
	\begin{align*}
		-\int_0^T\int_{\T}m\eta\psi_{xt}\,\dd x\dd t
		&=
		\int_0^T\int_{\T}
		\partial_x(m\eta)\psi_t\,\dd x\dd t
		\\
		&=
		\int_0^T\int_{\T}
		\left(
		m_x\eta+m\eta_x
		\right)\psi_t\,\dd x\dd t.
	\end{align*}
	Substituting these identities into the definition of \(\mathcal A\), the
	term involving \(m\eta_x\psi_t\) cancels with the second term in
	\(\mathcal A\).  Hence
	\begin{equation}\label{eq:A-first-representation}
		\mathcal A
		=
		-\left[
		\int_{\T}\partial_x(m\eta)\psi\,\dd x
		\right]_0^T
		+
		\int_0^T\int_{\T}
		m_x\eta\psi_t\,\dd x\dd t.
	\end{equation}
	
	We next compute \(\mathcal A\) directly from
	\eqref{eq:forced-gravity-system}.  Since
	\[
	\eta_t=G(\eta)\psi,
	\qquad
	-\psi_t=g\eta+N(\eta)\psi+F,
	\]
	we have
	\begin{align*}
		\mathcal A
		=
		\int_0^T\int_{\T}
		G(\eta)\psi\,m\psi_x\,\dd x\dd t
		+
		\int_0^T\int_{\T}
		\left(
		g\eta+N(\eta)\psi+F
		\right)m\eta_x\,\dd x\dd t.
	\end{align*}
	By the definition of \(R_a\), this becomes
	\begin{align*}
		\mathcal A
		&=
		R_a
		+
		g\int_0^T\int_{\T}
		m\eta\eta_x\,\dd x\dd t
		+
		\int_0^T\int_{\T}
		Fm\eta_x\,\dd x\dd t.
	\end{align*}
	Since
	\[
	\eta\eta_x=\frac12\partial_x(\eta^2),
	\]
	periodicity gives
	\[
	\int_{\T}m\eta\eta_x\,\dd x
	=
	-\frac12\int_{\T}m_x\eta^2\,\dd x.
	\]
	Therefore
	\begin{equation}\label{eq:A-second-representation}
		\mathcal A
		=
		R_a
		-\frac g2
		\int_0^T\int_{\T}
		m_x\eta^2\,\dd x\dd t
		+
		\int_0^T\int_{\T}
		Fm\eta_x\,\dd x\dd t.
	\end{equation}
	
	Equations \eqref{eq:A-first-representation} and
	\eqref{eq:A-second-representation} yield
	\begin{align*}
		-\int_0^T\int_{\T}
		m_x\eta\psi_t\,\dd x\dd t
		-\frac g2
		\int_0^T\int_{\T}
		m_x\eta^2\,\dd x\dd t
		+R_a
		&=
		-\left[
		\int_{\T}\partial_x(m\eta)\psi\,\dd x
		\right]_0^T\\
		&\quad-
		\int_0^T\int_{\T}
		Fm\eta_x\,\dd x\dd t.
	\end{align*}
	Recalling that
	\[
	\Theta
	=
	-\eta\psi_t-\frac g2\eta^2,
	\]
	the left-hand side is precisely
	\[
	\int_0^T\int_{\T}m_x\Theta\,\dd x\dd t+R_a.
	\]
	This proves \eqref{eq:basic-multiplier}.
\end{proof}

\begin{lemma}[Weighted equipartition]\label{lem:weighted-equipartition}
For every smooth periodic weight \(w=w(x)\),
\begin{align}
  \frac g2\int_0^T\int_{\T}w\eta^2\,\dd x\dd t
  &=\frac12\int_0^T\int_{\T}w\psi G(\eta)\psi\,\dd x\dd t
  -\frac12\int_0^T\int_{\T}w\eta F\,\dd x\dd t
  \notag\\
  &\quad-\frac12\left[\int_{\T}w\eta\psi\,\dd x\right]_0^T
  -\frac12\int_0^T\int_{\T}w\eta N(\eta)\psi\,\dd x\dd t.
  \label{eq:weighted-equipartition}
\end{align}
\end{lemma}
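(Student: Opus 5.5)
Multiply the second equation of \eqref{eq:forced-gravity-system} by \(w\eta\), integrate over \(\T\times[0,T]\), and isolate the gravity term. Concretely, we start from
\[
  \int_0^T\int_{\T}w\eta\psi_t\,\dd x\dd t
  +g\int_0^T\int_{\T}w\eta^2\,\dd x\dd t
  +\int_0^T\int_{\T}w\eta N(\eta)\psi\,\dd x\dd t
  =-\int_0^T\int_{\T}w\eta F\,\dd x\dd t .
\]

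The only term needing manipulation is the one with \(\psi_t\). Since \(w\) is independent of time, integrate by parts in \(t\):
\[
  \int_0^T\int_{\T}w\eta\psi_t\,\dd x\dd t
  =\left[\int_{\T}w\eta\psi\,\dd x\right]_0^T
  -\int_0^T\int_{\T}w\eta_t\psi\,\dd x\dd t .
\]
Then substitute the kinematic equation \(\eta_t=G(\eta)\psi\), which turns the last integral into \(\int_0^T\int_{\T}w\psi G(\eta)\psi\,\dd x\dd t\). The regularity in \eqref{eq:theorem-regularity} (with \(\eta\in C^1_tH^{s-1}\), \(\psi\in C^1_tH^{s-5/2}\), \(s\ge5\)) makes \(t\mapsto\int_{\T} w\eta\psi\,\dd x\) a \(C^1\) function, so this step is justified.

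Inserting this into the first identity gives
\[
  g\int_0^T\!\!\int_{\T}w\eta^2
  =\int_0^T\!\!\int_{\T}w\psi G(\eta)\psi
  -\left[\int_{\T}w\eta\psi\right]_0^T
  -\int_0^T\!\!\int_{\T}w\eta N(\eta)\psi
  -\int_0^T\!\!\int_{\T}w\eta F .
\]
Dividing by \(2\) yields \eqref{eq:weighted-equipartition} exactly.

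The argument is a direct computation with no spatial integration by parts, so periodicity of \(w\) is only needed to make the integrals well defined on \(\T\). The only point requiring care is the time integration by parts, which the stated regularity covers. The main "obstacle" is simply bookkeeping of signs and the factor \(\tfrac12\).
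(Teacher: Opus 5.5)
Your proposal is correct and is essentially the paper's proof. The paper starts from $\int_0^T\int_{\T}w\psi G(\eta)\psi$, rewrites it via $\eta_t=G(\eta)\psi$, integrates by parts in time and then substitutes the dynamic equation, which is the same computation run in the opposite order; your signs and factor $\tfrac12$ check out.
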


\begin{proof}
Since \(w\) is independent of time and \(\eta_t=G(\eta)\psi\),
\begin{align*}
  \int_0^T\int_{\T}w\psi G(\eta)\psi\,\dd x\dd t
  &=\int_0^T\int_{\T}w\psi\eta_t\,\dd x\dd t\\
  &=\left[\int_{\T}w\eta\psi\,\dd x\right]_0^T
    -\int_0^T\int_{\T}w\eta\psi_t\,\dd x\dd t.
\end{align*}
The dynamic equation in \eqref{eq:forced-gravity-system} gives
\[
  -\psi_t=g\eta+N(\eta)\psi+F.
\]
Substitution therefore yields
\begin{align*}
  \int_0^T\int_{\T}w\psi G(\eta)\psi\,\dd x\dd t
  &=\left[\int_{\T}w\eta\psi\right]_0^T
    +g\int_0^T\int_{\T}w\eta^2\,\dd x\dd t\\
  &\quad+\int_0^T\int_{\T}w\eta N(\eta)\psi\,\dd x\dd t
    +\int_0^T\int_{\T}w\eta F\,\dd x\dd t.
\end{align*}
Solving this equality for the gravitational term and dividing by two gives
\eqref{eq:weighted-equipartition}.
\end{proof}

\begin{lemma}[Intermediate gravity identity]\label{lem:intermediate-gravity}
Set
\begin{equation}\label{eq:zeta0-rho0}
  \zeta_0=\partial_x(m\eta)+\frac12\chi\eta-\frac14\eta,
  \qquad
  \varrho_0=\zeta_0+\eta-x\eta_x.
\end{equation}
Then
\begin{align}
  \frac12\int_0^T\mathcal H_g(t)\,\dd t
  &=-\int_0^T\int_{\T}F\zeta_0\,\dd x\dd t
  +\frac12\int_0^T\int_{\T}\chi\psi G(\eta)\psi\,\dd x\dd t
  \notag\\
  &\quad-\left[\int_{\T}\zeta_0\psi\,\dd x\right]_0^T
  -\int_0^T\int_{\T}m\psi_xG(\eta)\psi\,\dd x\dd t
  \notag\\
  &\quad-\int_0^T\int_{\T}\zeta_0N(\eta)\psi\,\dd x\dd t.
  \label{eq:intermediate-gravity}
\end{align}
\end{lemma}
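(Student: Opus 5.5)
The plan is to derive \eqref{eq:intermediate-gravity} as a linear combination of the basic multiplier identity \eqref{eq:basic-multiplier} and two instances of the weighted equipartition identity \eqref{eq:weighted-equipartition}: one with constant weight \(w=\tfrac12\) and one with weight \(w=\chi\). No new analysis is needed. Every ingredient is already an exact identity for the system \eqref{eq:forced-gravity-system}, and the work lies in regrouping terms so that they assemble into \(\zeta_0\).

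\emph{Step 1: rewrite \(\Theta\).} Using \(-\psi_t=g\eta+N(\eta)\psi+F\), we have
\[
  \Theta=\frac g2\eta^2+\eta N(\eta)\psi+\eta F.
\]
Substituting this into \eqref{eq:basic-multiplier} and expanding \(R_a\), the two \(F\)-terms combine into \(-\int_0^T\!\int_{\T} F\,\partial_x(m\eta)\), since \(m\eta_x+m_x\eta=\partial_x(m\eta)\). Likewise the \(N(\eta)\psi\)-terms combine into \(-\int_0^T\!\int_{\T}\partial_x(m\eta)N(\eta)\psi\). This gives
\[
  \frac g2\int_0^T\!\!\int_{\T} m_x\eta^2
  =-\Bigl[\int_{\T}\partial_x(m\eta)\psi\Bigr]_0^T
  -\int_0^T\!\!\int_{\T}\partial_x(m\eta)\bigl(F+N(\eta)\psi\bigr)
  -\int_0^T\!\!\int_{\T} m\psi_xG(\eta)\psi .
\]

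\emph{Step 2: trade gravitational terms for kinetic ones.} Write \(m_x=1-\chi\). Then
\[
  \frac g2\int_0^T\!\!\int_{\T} m_x\eta^2
  =\frac12\int_0^T\mathcal H_g\,\dd t
  +\Bigl(\frac g4\int_0^T\!\!\int_{\T}\eta^2-\frac14\int_0^T\!\!\int_{\T}\psi G(\eta)\psi\Bigr)
  -\frac g2\int_0^T\!\!\int_{\T}\chi\eta^2 .
\]
The bracket is \eqref{eq:weighted-equipartition} with \(w=\tfrac12\). It contributes \(-\tfrac14\) times the quantity \(\bigl[\int_{\T}\eta\psi\bigr]_0^T\), and \(-\tfrac14\) times each of \(\int_0^T\!\int_{\T}\eta F\) and \(\int_0^T\!\int_{\T}\eta N(\eta)\psi\). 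The last term is \(-1\) times \eqref{eq:weighted-equipartition} with \(w=\chi\). It produces \(-\tfrac12\int_0^T\!\int_{\T}\chi\psi G(\eta)\psi\), which becomes \(+\tfrac12\int_0^T\!\int_{\T}\chi\psi G(\eta)\psi\) once moved to the right-hand side, together with \(+\tfrac12\) times the corresponding \(\chi\eta\psi\), \(\chi\eta F\) and \(\chi\eta N(\eta)\psi\) terms.

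\emph{Step 3: collect.} Solve for \(\tfrac12\int_0^T\mathcal H_g\,\dd t\) and gather terms. In each of the three families (the endpoint terms \([\int\cdot\,\psi]_0^T\), the forcing terms \(\int\!\int\cdot\,F\), and the terms \(\int\!\int\cdot\,N(\eta)\psi\)), the multiplier acting on \(\psi\), \(F\) or \(N(\eta)\psi\) should come out as
\[
  \partial_x(m\eta)+\frac12\chi\eta-\frac14\eta=\zeta_0 .
\]
The remaining kinetic pieces are \(\tfrac12\int_0^T\!\int_{\T}\chi\psi G(\eta)\psi\) and \(-\int_0^T\!\int_{\T}m\psi_xG(\eta)\psi\). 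Together these give exactly \eqref{eq:intermediate-gravity}.

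The only real obstacle is sign bookkeeping in Step 3. I will check it on the \(F\)-terms first, requiring the coefficient to be \(-(\partial_x(m\eta)+\tfrac12\chi\eta-\tfrac14\eta)\), and then confirm that the endpoint and \(N\)-families carry identical coefficients. This must hold because each family enters the two input identities with the same structure.
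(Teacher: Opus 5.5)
Your proposal is correct and follows the paper's proof. You substitute $\Theta=\frac g2\eta^2+\eta N(\eta)\psi+\eta F$ into \eqref{eq:basic-multiplier}, split $m_x=1-\chi$, and eliminate the gravitational terms using \eqref{eq:weighted-equipartition} with a constant weight and with $w=\chi$; the bookkeeping closes, with each family of terms ending up multiplied by $-\zeta_0$. The only cosmetic difference is that you take $w=\tfrac12$ directly, whereas the paper takes $w=1$ and then removes the kinetic term using $\int_0^T\mathcal H_g\,\dd t=\frac12\int_0^T\!\int_{\T}\psi G(\eta)\psi+\frac g2\int_0^T\!\int_{\T}\eta^2$; these are equivalent because the identity is linear in $w$.
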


\begin{proof}
From the second equation in \eqref{eq:forced-gravity-system},
\[
  \Theta
  =\eta\bigl(F+N(\eta)\psi\bigr)+\frac g2\eta^2.
\]
Insert this expression in \eqref{eq:basic-multiplier}. 
The terms containing \(N(\eta)\psi\) combine into
\(\partial_x(m\eta)N(\eta)\psi\) because
\[
m_x\eta+m\eta_x=\partial_x(m\eta).
\]
Similarly, the terms involving \(F\) can be written as
\(F\partial_x(m\eta)\). Therefore, we obtain
\begin{align}
  \frac g2\int_0^T\int_{\T}m_x\eta^2\,\dd x\dd t
  &=-\left[\int_{\T}\partial_x(m\eta)\psi\,\dd x\right]_{0}^{T}
  -\int_0^T\int_{\T}F\partial_x(m\eta)\,\dd x\dd t
  \notag\\
  &\quad
  -\int_0^T\int_{\T}m\psi_xG(\eta)\psi\,\dd x\dd t
  -\int_0^T\int_{\T}\partial_x(m\eta)N(\eta)\psi\,\dd x\dd t.
  \label{eq:mx-potential}
\end{align}
Since \(m_x=1-\chi\), this identity contains the full gravitational
energy together with an additional localized contribution. We apply
Lemma~\ref{lem:weighted-equipartition} twice, first with \(w=1\) and then
with \(w=\chi\). 
First set
\[
  A_K=\frac12\int_0^T\int_{\T}\psi G(\eta)\psi\,\dd x\dd t,
  \qquad
  A_P=\frac g2\int_0^T\int_{\T}\eta^2\,\dd x\dd t.
\]
The choice \(w=1\) gives
\begin{align}
  A_P
  &=\frac12\int_0^T\mathcal H_g(t)\,\dd t
  -\frac14\int_0^T\int_{\T}\eta F\,\dd x\dd t
  -\frac14\left[\int_{\T}\eta\psi\,\dd x\right]_0^T
  \notag\\
  &\quad
  -\frac14\int_0^T\int_{\T}\eta N(\eta)\psi\,\dd x\dd t.
  \label{eq:AP-expression}
\end{align}
Indeed, the weighted equipartition formula first expresses \(A_P\) in
terms of \(A_K\), and then \(A_K+A_P=\int_0^T\mathcal H_g(t)\,\dd t\)
is used to eliminate \(A_K\).

The choice \(w=\chi\) gives
\begin{align}
  \frac g2\int_0^T\int_{\T}\chi\eta^2\,\dd x\dd t
  &=\frac12\int_0^T\int_{\T}
    \chi\psi G(\eta)\psi\,\dd x\dd t
  -\frac12\int_0^T\int_{\T}\chi\eta F\,\dd x\dd t
  \notag\\
  &\quad
  -\frac12\left[\int_{\T}\chi\eta\psi\,\dd x\right]_0^T
  -\frac12\int_0^T\int_{\T}
    \chi\eta N(\eta)\psi\,\dd x\dd t.
  \label{eq:weighted-chi-expression}
\end{align}
On the other hand, \(m_x=1-\chi\) turns \eqref{eq:mx-potential} into
\begin{align*}
  A_P
  &=-\left[\int_{\T}\partial_x(m\eta)\psi\,\dd x\right]_0^T
  -\int_0^T\int_{\T}F\partial_x(m\eta)\,\dd x\dd t\\
  &\quad
  -\int_0^T\int_{\T}m\psi_xG(\eta)\psi\,\dd x\dd t
  -\int_0^T\int_{\T}\partial_x(m\eta)N(\eta)\psi\,\dd x\dd t
  +\frac g2\int_0^T\int_{\T}\chi\eta^2\,\dd x\dd t.
\end{align*}
Insert \eqref{eq:AP-expression} on the left and
\eqref{eq:weighted-chi-expression} in the last term on the right.  Moving
the three contributions with coefficient \(-\frac14\eta\) to the right
gives
\begin{align*}
  \frac12\int_0^T\mathcal H_g(t)\,\dd t
  &=\frac12\int_0^T\int_{\T}\chi\psi G(\eta)\psi\,\dd x\dd t
  -\int_0^T\int_{\T}m\psi_xG(\eta)\psi\,\dd x\dd t\\
  &\quad
  -\int_0^T\int_{\T}
  F\left[\partial_x(m\eta)+\frac12\chi\eta-\frac14\eta\right]
  \dd x\dd t\\
  &\quad
  -\left[\int_{\T}
  \left[\partial_x(m\eta)+\frac12\chi\eta-\frac14\eta\right]
  \psi\,\dd x\right]_0^T\\
  &\quad
  -\int_0^T\int_{\T}
  \left[\partial_x(m\eta)+\frac12\chi\eta-\frac14\eta\right]
  N(\eta)\psi\,\dd x\dd t.
\end{align*}
The bracket is \(\zeta_0\), proving \eqref{eq:intermediate-gravity}.
\end{proof}

\subsection{Pohozaev and weighted stress identities}

The multiplier argument requires a Pohozaev identity for the term
\(x\psi_xG(\eta)\psi\) and a weighted stress identity for terms involving
\(N(\eta)\psi\).  In the infinite-depth setting, both identities are first
derived on truncated domains, and the contributions from the artificial
lower boundary must be shown to vanish as the truncation depth tends to
infinity.  They are established in Lemmas~\ref{lem:pohozaev} and
\ref{lem:harmonic-stress}, respectively.

\begin{lemma}[Pohozaev identity]\label{lem:pohozaev}
For each fixed \(t\),
\begin{equation}\label{eq:pohozaev}
  \int_{\T}x\psi_xG(\eta)\psi\,\dd x
  =\Sigma_\infty(t)
  +\int_{\T}(\eta-x\eta_x)N(\eta)\psi\,\dd x.
\end{equation}
\end{lemma}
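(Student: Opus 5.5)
The plan is to obtain \eqref{eq:pohozaev} from a Rellich--Pohozaev flux identity for the dilation field, applied on the truncated representative cell. The field \(x\) is not periodic, so I work on
\[
\Omega_R^{\mathrm{cell}}=\{(x,y):-L<x<L,\ -R<y<\eta(t,x)\},
\]
and keep track of the vertical sides. Set \(X=(x,y)\) and
\[
W=(X\cdot\nabla\phi)\nabla\phi-\tfrac12|\nabla\phi|^2X .
\]
In two dimensions, using \(\operatorname{div}X=2\) and \(\Delta\phi=0\),
\[
\operatorname{div}\bigl((X\cdot\nabla\phi)\nabla\phi\bigr)=|\nabla\phi|^2+X\cdot\nabla\bigl(\tfrac12|\nabla\phi|^2\bigr)=\operatorname{div}\bigl(\tfrac12|\nabla\phi|^2X\bigr),
\]
so \(\operatorname{div}W=0\). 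The regularity \eqref{eq:theorem-regularity} makes \(\phi\) smooth enough up to the free surface to apply the divergence theorem. The flux of \(W\) then has to be computed on the three parts of the boundary.

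\emph{Free surface.} I use the non-normalized normal \((-\eta_x,1)\) and write \(V=\phi_x|_{y=\eta}\), \(B=\phi_y|_{y=\eta}\), so that \(X\cdot\nabla\phi=xV+\eta B\) there. The flux density is
\[
(xV+\eta B)(B-\eta_xV)-\tfrac12(V^2+B^2)(\eta-x\eta_x).
\]
I split it into the terms carrying the factor \(x\) and those carrying \(\eta\), and compare with \eqref{eq:VB-relations-model} and \eqref{eq:N-VB}.
\begin{itemize}
\item The \(x\)-terms are \(x\bigl[VB-\tfrac12\eta_xV^2+\tfrac12\eta_xB^2\bigr]\). Since \(x\psi_xG(\eta)\psi=x(V+\eta_xB)(B-\eta_xV)\), these terms equal \(x\psi_xG(\eta)\psi+x\eta_xN(\eta)\psi\).
\item The \(\eta\)-terms are \(\eta\bigl[\tfrac12B^2-\tfrac12V^2-\eta_xVB\bigr]=-\eta N(\eta)\psi\).
\end{itemize}
Hence the surface flux is
\[
\int_{-L}^{L}\bigl[x\psi_xG(\eta)\psi-(\eta-x\eta_x)N(\eta)\psi\bigr]\dd x .
\]

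\emph{Vertical sides \(x=\pm L\).} By evenness and periodicity \(\phi_x=0\) there, so the first part of \(W\cdot n\) vanishes and only \(-\tfrac12\phi_y^2\,X\cdot n\) survives. At \(x=L\) we have \(X\cdot n=L\), and at \(x=-L\) we have \(X\cdot n=(-L)(-1)=L\). Evenness gives \(\phi_y(-L,y)=\phi_y(L,y)\), so the two sides contribute together
\[
-L\int_{-R}^{\eta(t,L)}\phi_y(L,y)^2\,\dd y ,
\]
which tends to \(-\Sigma_\infty(t)\) as \(R\to\infty\).

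\emph{Bottom \(y=-R\).} Here \(n=(0,-1)\) and \(X\cdot n=R\), so the flux density is \(-(x\phi_x-R\phi_y)\phi_y-\tfrac12R|\nabla\phi|^2\). Lemma~\ref{lem:deep-decay} bounds \(\|\nabla\phi(\cdot,-R)\|_{L^2}^2\) by \(C\mathrm e^{-2\pi R/L}\), up to a fixed factor from the choice of \(y_0\). The weights \(|x|\le L\) and \(R\) grow at most linearly, so this contribution is \(O(R\,\mathrm e^{-2\pi R/L})\to0\).

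\emph{Conclusion.} Setting the total flux to zero and letting \(R\to\infty\) gives
\[
\int_{\T}x\psi_xG(\eta)\psi\,\dd x-\int_{\T}(\eta-x\eta_x)N(\eta)\psi\,\dd x-\Sigma_\infty=0,
\]
which is \eqref{eq:pohozaev}. The main point needing care is the justification at the corners where the vertical sides meet the free surface and in the limit \(R\to\infty\). Away from the bottom, smoothness of \(\phi\) up to the boundary and the symmetry relation \(\phi_x(\pm L,\cdot)=0\), established before the lemma, should make this routine. The integrability of \(\phi_y(L,\cdot)^2\) on \((-\infty,\eta(t,L))\), needed for \(\Sigma_\infty\) to be finite, follows from the same exponential decay.
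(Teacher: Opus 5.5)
Your proposal is correct and follows the paper's proof. Your field \(W\) is exactly the paper's \(J=SX\) built from the harmonic stress tensor, and your free-surface, vertical-side and bottom flux computations, including the \(O(R\,\mathrm e^{-2\pi R/L})\) bottom decay from Lemma~\ref{lem:deep-decay}, match the paper's; the only difference is that the paper integrates over the half-cell \(0<x<L\) (where the side \(x=0\) contributes nothing since \(\phi_x(0,y)=0\)) and then doubles by evenness, whereas you use the full cell and collect the two equal contributions \(-\tfrac L2\int\phi_y(L,y)^2\,\mathrm dy\) from \(x=\pm L\) directly.
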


\begin{proof}
	Fix \(t\) and suppress the time variable from the notation.  Choose
	\(R>0\) sufficiently large so that
	\[
	-R<\min_{x\in\T}\eta(x),
	\]
	and consider the truncated half-cell
	\[
	\Omega_{+,R}
	=
	\left\{
	(x,y):
	0<x<L,\quad -R<y<\eta(x)
	\right\}.
	\]
	Let
	\[
	S
	=
	\nabla\phi\otimes\nabla\phi
	-\frac12|\nabla\phi|^2I
	\]
	be the harmonic stress tensor, and set
	\[
	X=(x,y),
	\qquad
	J=SX.
	\]
	
	We first verify that \(J\) is divergence free.  Writing
	\(\phi_i=\partial_i\phi\), we have
	\[
	S_{ij}
	=
	\phi_i\phi_j
	-\frac12|\nabla\phi|^2\delta_{ij}.
	\]
	Since \(\Delta\phi=0\), we have
	\begin{align*}
		\sum_{j=1}^2\partial_jS_{ij}
		&=
		\sum_{j=1}^2
		\partial_j(\phi_i\phi_j)
		-\frac12\partial_i|\nabla\phi|^2
		\\
		&=
		\sum_{j=1}^2
		\phi_{ij}\phi_j
		+\phi_i\Delta\phi
		-\frac12\partial_i|\nabla\phi|^2
		\\
		&=0.
	\end{align*}
	Thus \(\operatorname{div}S=0\).  Moreover, since the space dimension is
	two,
	\[
	\operatorname{tr}S
	=
	|\nabla\phi|^2
	-\frac22|\nabla\phi|^2
	=0.
	\]
	Because \(X_i\) satisfies \(	\partial_jX_i=\delta_{ij},\)
	we obtain
	\begin{align*}
		\operatorname{div}J
		&=
		\sum_{j=1}^2
		\partial_j
		\left(
		\sum_{i=1}^2S_{ji}X_i
		\right)
		\\
		&=
		\sum_{i=1}^2
		\left(
		\sum_{j=1}^2\partial_jS_{ji}
		\right)X_i
		+
		\sum_{i,j=1}^2
		S_{ji}\partial_jX_i
		\\
		&=
		(\operatorname{div}S)\cdot X
		+
		\sum_{i=1}^2S_{ii}
		\\&=0.
	\end{align*}
	
	We now compute the flux of \(J\) across the four parts of
	\(\partial\Omega_{+,R}\).
	On the free surface
	\[
	\Gamma_+
	=
	\{(x,\eta(x)):0<x<L\},
	\]
	the outward unit normal is
	\[
	\nu
	=
	\frac{(-\eta_x,1)}
	{\sqrt{1+\eta_x^2}},
	\]
	and
	\[
	\dd S
	=
	\sqrt{1+\eta_x^2}\,\dd x.
	\]
	Hence
	\[
	\nu\,\dd S
	=
	(-\eta_x,1)\,\dd x.
	\]
	Writing
	\[
	n_*=(-\eta_x,1),
	\]
	we have on the free surface
	\[
	\nabla\phi\cdot n_*
	=
	\phi_y-\eta_x\phi_x
	=
	G(\eta)\psi
	\]
	and
	\[
	X\cdot n_*
	=
	\eta-x\eta_x.
	\]
	If
	\[
	V=\phi_x|_{y=\eta},
	\qquad
	B=\phi_y|_{y=\eta},
	\]
	then
	\[
	\nabla\phi\cdot X
	=
	xV+\eta B.
	\]
	Therefore
	\begin{align*}
		J\cdot n_*
		&=
		(SX)\cdot n_*
		\\
		&=
		(\nabla\phi\cdot X)
		(\nabla\phi\cdot n_*)
		-
		\frac12|\nabla\phi|^2(X\cdot n_*)
		\\
		&=
		(xV+\eta B)(B-\eta_xV)
		-
		\frac12(V^2+B^2)(\eta-x\eta_x).
	\end{align*}
	A direct rearrangement gives
	\begin{align*}
		J\cdot n_*
		&=
		x(V+\eta_xB)(B-\eta_xV)
		-(\eta-x\eta_x)
		\left(
		\frac12V^2-\frac12B^2+\eta_xVB
		\right).
	\end{align*}
	Using
	\[
	\psi_x=V+\eta_xB,
	\qquad
	G(\eta)\psi=B-\eta_xV,
	\]
	and \eqref{eq:N-VB}, we obtain
	\begin{equation*}
		J\cdot n_*
		=
		x\psi_xG(\eta)\psi
		-
		(\eta-x\eta_x)N(\eta)\psi.
	\end{equation*}
	Thus the free-surface contribution is
	\[
	\int_0^L
	\left[
	x\psi_xG(\eta)\psi
	-
	(\eta-x\eta_x)N(\eta)\psi
	\right]\dd x.
	\]
	
	We next consider the vertical sides.  On \(x=0\), the outward normal is
	\((-1,0)\).  The evenness of \(\phi\) in \(x\) implies
	\[
	\phi_x(0,y)=0.
	\]
	Since \(X=(0,y)\) there,
	\[
	J\cdot(-1,0)
	=
	-\bigl(S_{11}X_1+S_{12}X_2\bigr)
	=
	-y\phi_x\phi_y
	=0.
	\]
	Hence the side \(x=0\) contributes nothing.
	
	On \(x=L\), the outward normal is \((1,0)\).  The periodicity and evenness
	of \(\phi\) imply
	\[
	\phi_x(L,y)=0.
	\]
	Indeed, \(\phi_x\) is odd, while periodicity gives
	\(\phi_x(-L,y)=\phi_x(L,y)\), so both values must vanish.  Consequently,
	\[
	S_{11}(L,y)
	=
	-\frac12\phi_y(L,y)^2,
	\qquad
	S_{12}(L,y)=0.
	\]
	Since \(X=(L,y)\),
	\[
	J\cdot(1,0)
	=
	-\frac L2\phi_y(L,y)^2.
	\]
	The contribution of this side is therefore
	\begin{equation}\label{eq:pohozaev-right-flux}
		-\frac L2
		\int_{-R}^{\eta(L)}
		\phi_y(L,y)^2\,\dd y.
	\end{equation}
	
	We also verify that the limit of
	\eqref{eq:pohozaev-right-flux} is well defined as \(R\to\infty\).
	Choose \(y_0<\min_{\T}\eta\).  By
	\eqref{eq:deep-decay-derivatives}, applied to
	\(\phi_y\) and \(\phi_{xy}\),
	\[
	\|\phi_y(\cdot,y)\|_{H^1(\T)}
	\leq
	C\mathrm e^{\pi(y-y_0)/L},
	\qquad y\leq y_0.
	\]
	The one-dimensional Sobolev embedding on \(\T\) then gives
	\[
	|\phi_y(L,y)|
	\leq
	C\mathrm e^{\pi(y-y_0)/L},
	\qquad y\leq y_0.
	\]
	It follows that
	\[
	\int_{-\infty}^{y_0}
	\phi_y(L,y)^2\,\dd y<\infty.
	\]
	Smoothness of \(\phi\) on the remaining finite vertical interval shows
	that
	\[
	\int_{-\infty}^{\eta(L)}
	\phi_y(L,y)^2\,\dd y<\infty.
	\]
	Therefore
	\[
	\lim_{R\to\infty}
	\int_{-R}^{\eta(L)}
	\phi_y(L,y)^2\,\dd y
	=
	\int_{-\infty}^{\eta(L)}
	\phi_y(L,y)^2\,\dd y.
	\]
	
	It remains to consider the artificial lower boundary
	\[
	\Gamma_{-,R}
	=
	\{(x,-R):0<x<L\},
	\]
	whose outward normal is \((0,-1)\).  On this boundary,
	\[
	|X|
	=
	\sqrt{x^2+R^2}
	\leq L+R.
	\]
	Since
	\[
	|S|
	\leq C|\nabla\phi|^2,
	\]
	we obtain
	\begin{align*}
		\left|
		\int_0^L
		J(x,-R)\cdot(0,-1)\,\dd x
		\right|
		&\leq
		C(L+R)
		\int_0^L
		|\nabla\phi(x,-R)|^2\,\dd x
		\\
		&\leq
		C(L+R)
		\|\nabla\phi(\cdot,-R)\|_{L^2(\T)}^2.
	\end{align*}
	For \(R\) sufficiently large, \(-R\leq y_0\), and
	Lemma~\ref{lem:deep-decay} gives
	\[
	\|\nabla\phi(\cdot,-R)\|_{L^2(\T)}
	\leq
	C\mathrm e^{-\pi R/L}.
	\]
	After absorbing the fixed factor depending on \(y_0\) into \(C\), we find
	\[
	\left|
	\int_0^L
	J(x,-R)\cdot(0,-1)\,\dd x
	\right|
	\leq
	C(1+R)\mathrm e^{-2\pi R/L}
	\longrightarrow0\quad \text{as} \quad R\to\infty.
	\]

	Applying the divergence theorem to \(J\) on \(\Omega_{+,R}\), using
	\(\operatorname{div}J=0\), and inserting the boundary contributions computed on the free surface, the
	vertical sides \(x=0\) and \(x=L\), and the lower boundary \(y=-R\) yields
	\begin{align*}
		\int_0^L
		\left[
		x\psi_xG(\eta)\psi
		-
		(\eta-x\eta_x)N(\eta)\psi
		\right]\dd x
		-\frac L2
		\int_{-R}^{\eta(L)}
		\phi_y(L,y)^2\dd y
		+
		\int_0^L
		J(x,-R)\cdot(0,-1)\dd x
		=0.
	\end{align*}
	Passing to the limit \(R\to\infty\) gives
	\begin{equation}\label{eq:pohozaev-half-cell}
		\int_0^L
		\left[
		x\psi_xG(\eta)\psi
		-
		(\eta-x\eta_x)N(\eta)\psi
		\right]\dd x
		=
		\frac L2
		\int_{-\infty}^{\eta(L)}
		\phi_y(L,y)^2\,\dd y.
	\end{equation}
	
	Finally, we pass from the half-cell to the full periodic cell.  The
	functions \(\eta\) and \(\psi\) are even, while \(\eta_x\) and
	\(\psi_x\) are odd.  Since \(\phi\) is even in \(x\),
	\eqref{eq:def-DNO} shows that \(G(\eta)\psi\) is even.
	Moreover, by \eqref{eq:N-definition},
	\[
	N(\eta)\psi
	=
	\frac12\psi_x^2
	-
	\frac12
	\frac{
		\bigl(G(\eta)\psi+\eta_x\psi_x\bigr)^2
	}{
		1+\eta_x^2
	},
	\]
	so \(N(\eta)\psi\) is also even.  Hence
	\[
	x\psi_xG(\eta)\psi
	\qquad\text{and}\qquad
	(\eta-x\eta_x)N(\eta)\psi
	\]
	are both even functions of \(x\).  Doubling
	\eqref{eq:pohozaev-half-cell} therefore gives
	\[
	\int_{\T}
	\left[
	x\psi_xG(\eta)\psi
	-
	(\eta-x\eta_x)N(\eta)\psi
	\right]\dd x
	=
	L
	\int_{-\infty}^{\eta(L)}
	\phi_y(L,y)^2\,\dd y.
	\]
	By the definition \eqref{eq:Sigma-infty} of \(\Sigma_\infty(t)\), this is
	equivalent to
	\[
	\int_{\T}
	x\psi_xG(\eta)\psi\,\dd x
	=
	\Sigma_\infty(t)
	+
	\int_{\T}
	(\eta-x\eta_x)N(\eta)\psi\,\dd x,
	\]
	which is \eqref{eq:pohozaev}.
\end{proof}

\begin{lemma}[Weighted stress identity]\label{lem:harmonic-stress}
Let \((\eta,\psi,P_{\mathrm{ext}})\) be a solution of
\eqref{eq:hydroelastic-system} on \([0,T]\), and let
\(f\in C^1([-L,L])\).  Then, for every fixed \(t\in[0,T]\),
\begin{equation}\label{eq:harmonic-stress}
  \int_{-L}^{L}fN(\eta)\psi\,\dd x
  =-\int_{-L}^{L}\int_{-\infty}^{\eta(x)}
  f_x\phi_x\phi_y\,\dd y\dd x.
\end{equation}
In particular, the identity applies to the generally nonperiodic weights
\(\varrho_0\) and \(\varrho\).
\end{lemma}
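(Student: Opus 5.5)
The plan is to reuse the divergence-free vector field from the proof of \eqref{eq:N-zero-mean}, namely
\[
  X=\Bigl(\phi_x\phi_y,\ \tfrac12(\phi_y^2-\phi_x^2)\Bigr),
  \qquad \operatorname{div}X=0 ,
\]
now multiplied by the weight \(f=f(x)\). Since \(f\) does not depend on \(y\),
\[
  \operatorname{div}(fX)=f_xX_1=f_x\phi_x\phi_y .
\]
We apply the divergence theorem to \(fX\) on the truncated cell
\[
  \Omega_{R}^{\mathrm{cell}}=\{(x,y):-L<x<L,\ -R<y<\eta(x)\},
\]
working on the closed interval \([-L,L]\) rather than on the torus, because \(f\) need not be periodic. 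Smoothness of \(\phi\) up to the free surface comes from the regularity assumptions \eqref{eq:theorem-regularity}, with \(\nabla u\in H^{s-1}\) and \(s\geq5\).

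The boundary consists of four parts, which we treat in turn.
\begin{itemize}
\item On the free surface we use the non-normalized normal \((-\eta_x,1)\). Exactly as in the proof of Lemma~\ref{lem:zero-mode}, \(X\cdot(-\eta_x,1)=-N(\eta)\psi\), so this part contributes \(-\int_{-L}^{L}fN(\eta)\psi\,\dd x\).
\item On the vertical sides \(x=\pm L\), the outward normals are \((\pm1,0)\), so the flux density is \(\pm f(\pm L)\phi_x\phi_y\). By evenness and periodicity, \(\phi_x(\pm L,y)=0\), as shown at the start of Section~\ref{sec:fluid-identities}. Both side terms therefore vanish identically. This is the only point where the symmetry class is needed, and it is exactly what permits nonperiodic weights.
\item On the lower boundary \(y=-R\), the flux is \(-\tfrac12\int_{-L}^{L}f(\phi_y^2-\phi_x^2)(x,-R)\,\dd x\). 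This is bounded by \(\|f\|_{L^\infty}\|\nabla\phi(\cdot,-R)\|_{L^2(\T)}^2\), which is at most \(Ce^{-2\pi R/L}\) by Lemma~\ref{lem:deep-decay}.
\end{itemize}

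Collecting the four contributions gives
\[
  \iint_{\Omega_R^{\mathrm{cell}}}f_x\phi_x\phi_y\,\dd y\dd x
  =-\int_{-L}^{L}fN(\eta)\psi\,\dd x+o(1)
  \qquad (R\to\infty).
\]

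To pass to the limit \(R\to\infty\) on the left-hand side, note that \(|f_x\phi_x\phi_y|\leq\|f_x\|_{L^\infty}|\nabla\phi|^2\), which is integrable by the finite-energy condition \eqref{eq:finite-dirichlet-energy}. Dominated convergence then applies and yields \eqref{eq:harmonic-stress}.

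Finally, for the weights \(\varrho_0\) and \(\varrho\): they are smooth on \([-L,L]\) because \(\eta\) is smooth and \(x\) is bounded there, so they belong to \(C^1([-L,L])\). They may fail to be periodic, since \(x\eta_x\) is not periodic in general, but the lemma does not require periodicity.

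The only delicate step is the side contribution. It uses \(\phi_x(\pm L,y)=0\) along the entire vertical segment down to \(-R\), which holds by the reflection uniqueness argument already given at the start of Section~\ref{sec:fluid-identities}. Nothing else requires care.
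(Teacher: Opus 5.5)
Your proposal is correct and follows the paper's proof essentially step for step. Both use the weighted field \(fX\) with \(\operatorname{div}(fX)=f_x\phi_x\phi_y\), the divergence theorem on the truncated cell over \([-L,L]\), the free-surface flux \(-fN(\eta)\psi\), vanishing side fluxes from \(\phi_x(\pm L,y)=0\), and decay of the lower flux by Lemma~\ref{lem:deep-decay}; your dominated-convergence step for the bulk integral as \(R\to\infty\) just makes explicit a limit the paper passes over without comment.
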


\begin{proof}
Set
\[
  X_f=
  \left(
    f\phi_x\phi_y,
    \frac f2(\phi_y^2-\phi_x^2)
  \right).
\]
A direct differentiation, using \(\phi_{xx}+\phi_{yy}=0\), gives
\[
  \operatorname{div}X_f=f_x\phi_x\phi_y.
\]
Apply the divergence theorem on the truncated cell
\(\Omega_R=\Omega(t)\cap\{y>-R\}\).  On the free surface, using again the
non-unit normal \((-\eta_x,1)\), the flux density is
\begin{align*}
  X_f\cdot(-\eta_x,1)
  &=f\left[-\eta_x\phi_x\phi_y
       +\frac12(\phi_y^2-\phi_x^2)\right]_{y=\eta}\\
  &=-f\left[
       \frac12V^2-\frac12B^2+\eta_xVB
     \right]\\
  & =-fN(\eta)\psi.
\end{align*}
Here \(V=\phi_x|_{y=\eta}\) and \(B=\phi_y|_{y=\eta}\).
On the vertical sides the flux is \(\pm f\phi_x\phi_y\), which vanishes
because even periodicity implies \(\phi_x(\pm L,y)=0\).  On \(y=-R\),
Lemma~\ref{lem:deep-decay} gives
\[
  \left|
  \frac12\int_{-L}^{L}
  f(x)\bigl(\phi_y^2-\phi_x^2\bigr)(x,-R)\,\dd x
  \right|
  \leq
  C\|f\|_{L^\infty([-L,L])}
  \|\nabla\phi(\cdot,-R)\|_{L^2(\T)}^2
  \longrightarrow0.
\]
Passing to the limit in the divergence theorem gives
\[
  \int_{-L}^{L}\int_{-\infty}^{\eta(x)}
  f_x\phi_x\phi_y\,\dd y\dd x
  =-\int_{-L}^{L}fN(\eta)\psi\,\dd x,
\]
which is \eqref{eq:harmonic-stress}.  No periodicity of \(f\) was used.  Only
its \(C^1\)-regularity on the closed representative cell is needed.
\end{proof}

\subsection{Localized multiplier identities}

We begin with a localized multiplier identity for the forced gravity system
\eqref{eq:forced-gravity-system} in the infinite-depth setting.  We then adapt
the corresponding gravity-wave multiplier relation to the hydroelastic system
by incorporating the nonlinear
flexural term and the multiplier \(\zeta\).  The resulting identity
\eqref{eq:hydro-multiplier} is the starting point of the stabilization
estimate.

\begin{proposition}[Localized multiplier identity]
\label{prop:alazard-deep}
Let \((\eta,\psi)\) be a sufficiently smooth even solution of
\eqref{eq:forced-gravity-system} on \([0,T]\), and let \(\phi\) be its
associated finite-energy harmonic extension in \(\Omega(t)\).  Then
\begin{align}
  \frac12\int_0^T\mathcal H_g(t)\,\dd t
  +\int_0^T\Sigma_\infty(t)\,\dd t
  &=\int_0^T\int_{\T}
  \left[\frac12\chi\psi+(x-m)\psi_x\right]
  G(\eta)\psi\,\dd x\dd t
  \notag\\
  &\quad-\int_0^T\int_{\T}F\zeta_0\,\dd x\dd t
  -\left[\int_{\T}\zeta_0\psi\,\dd x\right]_0^T
  \notag\\
  &\quad+\int_0^T\iint_{\Omega(t)}
  (\varrho_0)_x\phi_x\phi_y\,\dd y\dd x\dd t.
  \label{eq:alazard-deep}
\end{align}
\end{proposition}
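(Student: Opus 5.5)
The plan is to start from the intermediate gravity identity \eqref{eq:intermediate-gravity} of Lemma~\ref{lem:intermediate-gravity}. Its right-hand side already contains three of the terms in \eqref{eq:alazard-deep}: $-\int\!\!\int F\zeta_0$, the boundary term $-[\int\zeta_0\psi]_0^T$, and $\frac12\int\!\!\int\chi\psi G(\eta)\psi$. The remaining two terms must be reorganized:
\[
-\int_0^T\!\!\int_{\T} m\psi_xG(\eta)\psi\,\dd x\dd t
\quad\text{and}\quad
-\int_0^T\!\!\int_{\T}\zeta_0N(\eta)\psi\,\dd x\dd t.
\]

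First, split $m=x-(x-m)$ on the representative cell $[-L,L]$. This gives
\[
-\int m\psi_xG(\eta)\psi=\int(x-m)\psi_xG(\eta)\psi-\int x\psi_xG(\eta)\psi.
\]
The first piece combines with the $\chi$-term to produce the integrand $\bigl[\frac12\chi\psi+(x-m)\psi_x\bigr]G(\eta)\psi$. For the second piece, I will apply the Pohozaev identity of Lemma~\ref{lem:pohozaev} at each fixed $t$:
\[
-\int_{\T}x\psi_xG(\eta)\psi\,\dd x
=-\Sigma_\infty(t)-\int_{\T}(\eta-x\eta_x)N(\eta)\psi\,\dd x.
\]
Integrating in time and moving $\int_0^T\Sigma_\infty$ to the left produces the left-hand side of \eqref{eq:alazard-deep}.

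Next, the two $N(\eta)\psi$ contributions combine:
\[
-\int(\eta-x\eta_x)N(\eta)\psi-\int\zeta_0N(\eta)\psi=-\int\varrho_0N(\eta)\psi,
\]
by the definition $\varrho_0=\zeta_0+\eta-x\eta_x$ in \eqref{eq:zeta0-rho0}. Applying Lemma~\ref{lem:harmonic-stress} with $f=\varrho_0$ turns this into
\[
+\iint_{\Omega(t)}(\varrho_0)_x\phi_x\phi_y\,\dd y\dd x,
\]
and integrating in time completes the identity.

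The main obstacle is justifying the use of the non-periodic weights $x$ and $\varrho_0$, since $x\eta_x$ is not $2L$-periodic. For Lemma~\ref{lem:harmonic-stress} this is already handled: it requires only $f\in C^1([-L,L])$, and the side fluxes vanish because $\phi_x(\pm L,y)=0$. For the Pohozaev step, all integrals over $\T$ containing $x$ must be read on $[-L,L]$, exactly as in the proof of Lemma~\ref{lem:pohozaev}. The cell-splitting $m=x-(x-m)$ is also harmless because $x-m$ is odd and vanishes on $|x|\le L-\delta$.

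Regularity \eqref{eq:theorem-regularity} then permits integrating the fixed-time identities over $[0,T]$. The integrals $\int_0^T\Sigma_\infty$ and the fluid integrals are finite by Lemma~\ref{lem:deep-decay} and the continuity of $\nabla_{x,z}u$.
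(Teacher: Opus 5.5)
Your proposal is correct and follows essentially the same route as the paper: you start from the intermediate gravity identity, split $-m\psi_xG(\eta)\psi = -x\psi_xG(\eta)\psi + (x-m)\psi_xG(\eta)\psi$, and apply the Pohozaev identity to produce $\Sigma_\infty$. You then combine the two $N(\eta)\psi$ terms into $-\int\varrho_0N(\eta)\psi$ and convert it with the weighted stress identity (Lemma~\ref{lem:harmonic-stress}); the signs and the treatment of the non-periodic weights on $[-L,L]$ match the paper's argument.
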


\begin{proof}
Start from \eqref{eq:intermediate-gravity}.  Split
\[
  -m\psi_xG(\eta)\psi
  =-x\psi_xG(\eta)\psi
   +(x-m)\psi_xG(\eta)\psi.
\]
By Lemma~\ref{lem:pohozaev},
\begin{align*}
  -\int_{\T}x\psi_xG(\eta)\psi\,\dd x
  &=-\Sigma_\infty(t)
    -\int_{\T}(\eta-x\eta_x)N(\eta)\psi\,\dd x.
\end{align*}
Substituting this into \eqref{eq:intermediate-gravity} and moving the
\(\Sigma_\infty\)-term to the left gives
\begin{align*}
  \frac12\int_0^T\mathcal H_g(t)\,\dd t
  +\int_0^T\Sigma_\infty(t)\,\dd t
  &=
  \frac12\int_0^T\int_{\T}
  \chi\psi G(\eta)\psi\,\dd x\dd t
  +\int_0^T\int_{\T}
  (x-m)\psi_xG(\eta)\psi\,\dd x\dd t
  \\
  &\quad
  -\int_0^T\int_{\T}
  F\zeta_0\,\dd x\dd t
  -\left[\int_{\T}\zeta_0\psi\,\dd x\right]_0^T
  \\
  &\quad
  -\int_0^T\int_{\T}
  \bigl[\zeta_0+\eta-x\eta_x\bigr]
  N(\eta)\psi\,\dd x\dd t.
\end{align*}
By definition,
\[
  \varrho_0=\zeta_0+\eta-x\eta_x,
\]
so the last term is
\(-\int_0^T\int_{\T}\varrho_0N(\eta)\psi\,\dd x\dd t\).  Applying the weighted stress
identity \eqref{eq:harmonic-stress} at each time yields
\[
  -\int_{\T}\varrho_0N(\eta)\psi\,\dd x
  =\iint_{\Omega(t)}(\varrho_0)_x\phi_x\phi_y\,\dd y\dd x.
\]
This proves \eqref{eq:alazard-deep}.  The passage to infinite depth has
already been justified in Lemmas~\ref{lem:pohozaev} and
\ref{lem:harmonic-stress}: by Lemma~\ref{lem:deep-decay}, their respective
lower-boundary fluxes on \(y=-R\) tend to zero as \(R\to\infty\), with the
Pohozaev flux bounded by
\(C(1+R)\mathrm e^{-2\pi R/L}\) and the weighted stress flux by
\(C\|f\|_{L^\infty}\mathrm e^{-2\pi R/L}\).
\end{proof}
The following identity is the central multiplier relation in the stabilization
argument.  
\begin{proposition}
\label{prop:hydro-multiplier}
Let \((\eta,\psi,P_{\mathrm{ext}})\) be a solution of
\eqref{eq:hydroelastic-system} on \([0,T]\).  Set
\[\mathcal{W}=2g\int_0^T\int_{\T}\chi\eta^2\,\dd x\dd t,\quad \mathcal{O}_{\infty}=\int_0^T\Sigma_\infty(t)\,\dd t,\]
then
\begin{align}
\notag&  \frac12\int_0^T\mathcal H_g(t)\,\dd t
  +\beta\int_0^T\int_{\T}\Bflex(\eta)\zeta\,\dd x\dd t
  +\mathcal{W}+\mathcal{O}_{\infty}\\
 \notag =&\int_0^T\int_{\T}
  \left[\frac52\chi\psit+(x-m)\psi_x\right]
  G(\eta)\psi\,\dd x\dd t
  -\int_0^T\int_{\T}\Pt\zeta\,\dd x\dd t\\
 & -\left[\int_{\T}\zeta\psit\,\dd x\right]_0^T
 +\int_0^T\iint_{\Omega(t)}
  \varrho_x\phi_x\phi_y\,\dd y\dd x\dd t.
  \label{eq:hydro-multiplier}
\end{align}
\end{proposition}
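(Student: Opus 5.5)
The plan is to derive \eqref{eq:hydro-multiplier} from Proposition~\ref{prop:alazard-deep}, which holds for the forced gravity system \eqref{eq:forced-gravity-system}. Comparing \eqref{eq:hydroelastic-system} with \eqref{eq:forced-gravity-system}, I would apply it with the forcing
\[
F=\beta\Bflex(\eta)+P_{\mathrm{ext}}.
\]
The multipliers differ only by a localized term:
\[
\zeta=\zeta_0+2\chi\eta,
\qquad
\varrho=\varrho_0+2\chi\eta.
\]
The work therefore has two parts. First, absorb the correction $2\chi\eta$ using the weighted equipartition and the weighted stress identity. Second, pass from $\psi,P_{\mathrm{ext}}$ to $\psit,\Pt$ through zero-mode bookkeeping.

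For the first step I would apply Lemma~\ref{lem:weighted-equipartition} with $w=\chi$ and multiply by $4$. This gives
\[
\mathcal W=2\int_0^T\!\!\int_{\T}\chi\psi G(\eta)\psi
-2\int_0^T\!\!\int_{\T}\chi\eta F
-2\Big[\int_{\T}\chi\eta\psi\Big]_0^T
-2\int_0^T\!\!\int_{\T}\chi\eta N(\eta)\psi .
\]
By Lemma~\ref{lem:harmonic-stress} with $f=2\chi\eta$, the last term equals $\int_0^T\iint_{\Omega(t)}(2\chi\eta)_x\phi_x\phi_y$.

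Adding this to \eqref{eq:alazard-deep} produces four changes:
\begin{itemize}
\item the coefficient $\tfrac12\chi\psi$ becomes $\tfrac52\chi\psi$;
\item $-\int F\zeta_0-2\int F\chi\eta$ combines to $-\int F\zeta$;
\item the time-boundary terms combine to $-[\int\zeta\psi]_0^T$;
\item the interior stress terms combine to $\iint\varrho_x\phi_x\phi_y$.
\end{itemize}
Splitting $-\int F\zeta=-\beta\int\Bflex(\eta)\zeta-\int P_{\mathrm{ext}}\zeta$ and moving the $\beta$-term to the left-hand side gives \eqref{eq:hydro-multiplier}, except that $\psi,P_{\mathrm{ext}}$ still appear where the statement has $\psit,\Pt$.

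For the zero modes I would write $\psi=\psit+a(t)$ with $a=\avg{\psi}$, and $P_{\mathrm{ext}}=\Pt+p(t)$ with $p=\avg{P_{\mathrm{ext}}}$. The discrepancy is then
\[
\tfrac52\int_0^T a\int_{\T}\chi G(\eta)\psi\,\dd x\,\dd t
-\int_0^T p\int_{\T}\zeta\,\dd x\,\dd t
-\Big[a\int_{\T}\zeta\,\dd x\Big]_0^T .
\]
Since $\int_{\T} m\eta_x\,\dd x=-\int_{\T} m_x\eta\,\dd x$, $m_x=1-\chi$ and $\int_{\T}\eta\,\dd x=0$, one finds $\int_{\T}\zeta\,\dd x=\tfrac52\int_{\T}\chi\eta\,\dd x$. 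Here I use the mean-zero property from Lemma~\ref{lem:zero-mode}. Differentiating in time with $\eta_t=G(\eta)\psi$ gives
\[
\frac{\dd}{\dd t}\int_{\T}\zeta\,\dd x=\tfrac52\int_{\T}\chi G(\eta)\psi\,\dd x .
\]
I then write the boundary term as the time integral of its derivative and use \eqref{eq:mean-psi-evolution}, namely $a'=-p$. The three contributions cancel exactly.

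The main obstacle is this zero-mode cancellation. It hinges on the coefficient $\tfrac52$ appearing both in $\int_{\T}\zeta\,\dd x$ and in the $\chi\psi G(\eta)\psi$ term. Getting this right requires careful use of $\int_{\T}\eta\,\dd x=0$ and of the sign convention in \eqref{eq:mean-psi-evolution}.

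There is also a regularity point for Lemma~\ref{lem:harmonic-stress}. The weight $2\chi\eta$ is periodic and $C^1$ under \eqref{eq:theorem-regularity}, so the lemma applies. The weight $\varrho$ is nonperiodic, which the lemma already allows.
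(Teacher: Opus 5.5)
Your proposal is correct and follows essentially the same route as the paper. The paper adds twice the identity obtained from $\frac{\dd}{\dd t}\int_{\T}\chi\eta\psi\,\dd x$, which is exactly your weighted equipartition with $w=\chi$ multiplied by $4$. It then absorbs $-2\int_{\T}\chi\eta N(\eta)\psi\,\dd x$ via Lemma~\ref{lem:harmonic-stress} with $f=2\chi\eta$, and removes the zero modes using $\int_{\T}\zeta\,\dd x=\frac52\int_{\T}\chi\eta\,\dd x$, integration by parts in time, and \eqref{eq:mean-psi-evolution}, just as you do.
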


\begin{proof}
	For the hydroelastic waves model \eqref{eq:hydroelastic-system}, the dynamic
	equation can be written in the form
	\[
	\psi_t+g\eta+N(\eta)\psi=-F
	\]
	with
	\[
	F=P_{\mathrm{ext}}+\beta\Bflex(\eta).
	\]
	Thus the pair \((\eta,\psi)\) satisfies
	\eqref{eq:forced-gravity-system} with this choice of \(F\), and
	Proposition~\ref{prop:alazard-deep} gives
	\begin{align}
		\frac12\int_0^T\mathcal H_g(t)\,\dd t
		+\int_0^T\Sigma_\infty(t)\,\dd t
		&=
		\int_0^T\int_{\T}
		\left[
		\frac12\chi\psi+(x-m)\psi_x
		\right]
		G(\eta)\psi\,\dd x\dd t
		\notag\\
		&\quad
		-\int_0^T\int_{\T}
		F\zeta_0\,\dd x\dd t
		-\left[
		\int_{\T}\zeta_0\psi\,\dd x
		\right]_0^T
		\notag\\
		&\quad
		+\int_0^T\iint_{\Omega(t)}
		(\varrho_0)_x\phi_x\phi_y\,\dd y\dd x\dd t.
		\label{eq:hydro-multiplier-start}
	\end{align}
	
	We now pass from the gravity multiplier \(\zeta_0\) to the elastic
	multiplier \(\zeta\).  Since \(\chi=\chi(x)\) is independent of time,
	the classical regularity of the solution allows differentiation under the
	integral sign, and hence
	\begin{align*}
		\frac{\dd}{\dd t}
		\int_{\T}\chi\eta\psi\,\dd x
		&=
		\int_{\T}
		\chi\bigl(\eta_t\psi+\eta\psi_t\bigr)\,\dd x.
	\end{align*}
	Using
	\[
	\eta_t=G(\eta)\psi,
	\qquad
	\psi_t=-g\eta-N(\eta)\psi-F,
	\]
	we obtain
	\begin{align*}
		\frac{\dd}{\dd t}
		\int_{\T}\chi\eta\psi\,\dd x
		&=
		\int_{\T}\chi
		\left[
		\psi G(\eta)\psi
		-g\eta^2
		-\eta N(\eta)\psi
		-\eta F
		\right]\dd x.
	\end{align*}
	Integrating over \(t\in[0,T]\) gives the exact identity
	\begin{equation}\label{eq:exact-zero-identity}
		0=
		\int_0^T\int_{\T}\chi
		\left[
		\psi G(\eta)\psi
		-g\eta^2
		-\eta N(\eta)\psi
		-\eta F
		\right]\dd x\dd t
		-
		\left[
		\int_{\T}\chi\eta\psi\,\dd x
		\right]_0^T.
	\end{equation}
	
	Add twice \eqref{eq:exact-zero-identity} to
	\eqref{eq:hydro-multiplier-start}.  By
	\eqref{eq:zeta0-rho0} and \eqref{eq:zeta-rho},
	\[
	\zeta_0+2\chi\eta
	=
	\partial_x(m\eta)
	+\frac52\chi\eta
	-\frac14\eta
	=
	\zeta.
	\]
	Therefore the forcing terms satisfy
	\begin{align*}
		-\int_0^T\int_{\T}
		F\zeta_0\,\dd x\dd t
		-2\int_0^T\int_{\T}
		\chi\eta F\,\dd x\dd t
		=
		-\int_0^T\int_{\T}
		F\zeta\,\dd x\dd t,
	\end{align*}
	while the endpoint terms satisfy
	\begin{align*}
		-\left[
		\int_{\T}\zeta_0\psi\,\dd x
		\right]_0^T
		-2\left[
		\int_{\T}\chi\eta\psi\,\dd x
		\right]_0^T
		=
		-\left[
		\int_{\T}\zeta\psi\,\dd x
		\right]_0^T.
	\end{align*}
	Similarly, the coefficient of the first observation term becomes
	\[
	\frac12\chi\psi+2\chi\psi
	=
	\frac52\chi\psi.
	\]
	The gravitational contribution added to the right-hand side is
	\[
	-2g\int_0^T\int_{\T}\chi\eta^2\,\dd x\dd t,
	\]
	which we move to the left-hand side.
	
	It remains at this stage to combine the nonlinear term
	\[
	-2\int_0^T\int_{\T}
	\chi\eta N(\eta)\psi\,\dd x\dd t
	\]
	with the bulk term in \eqref{eq:hydro-multiplier-start}.  For each fixed
	\(t\in[0,T]\), Lemma~\ref{lem:harmonic-stress}, applied with
	\[
	f=2\chi\eta,
	\]
	gives
	\[
	-2\int_{\T}
	\chi\eta N(\eta)\psi\,\dd x
	=
	\iint_{\Omega(t)}
	(2\chi\eta)_x\phi_x\phi_y\,\dd y\dd x.
	\]
	Consequently,
	\begin{align*}
		\iint_{\Omega(t)}
		(\varrho_0)_x\phi_x\phi_y\,\dd y\dd x
		-2\int_{\T}
		\chi\eta N(\eta)\psi\,\dd x=
		\iint_{\Omega(t)}
		\left[
		(\varrho_0)_x+(2\chi\eta)_x
		\right]
		\phi_x\phi_y\,\dd y\dd x.
	\end{align*}
	Since
	\[
	\varrho_0
	=
	\zeta_0+\eta-x\eta_x
	\]
	and
	\[
	\varrho
	=
	\zeta+\eta-x\eta_x
	=
	\varrho_0+2\chi\eta,
	\]
	we have
	\[
	(\varrho_0)_x+(2\chi\eta)_x
	=
	\varrho_x.
	\]
	Thus, after integration over \(t\in[0,T]\),
	\begin{align*}
		\int_0^T\iint_{\Omega(t)}
		(\varrho_0)_x\phi_x\phi_y\,\dd y\dd x\dd t
		-
		2\int_0^T\int_{\T}
		\chi\eta N(\eta)\psi\,\dd x\dd t=
		\int_0^T\iint_{\Omega(t)}
		\varrho_x\phi_x\phi_y\,\dd y\dd x\dd t.
	\end{align*}
	
	We now use
	\[
	F=P_{\mathrm{ext}}+\beta\Bflex(\eta)
	\]
	in the forcing term.  Hence
	\begin{align*}
		-\int_0^T\int_{\T}F\zeta\,\dd x\dd t
		&=
		-\int_0^T\int_{\T}
		P_{\mathrm{ext}}\zeta\,\dd x\dd t
		\\
		&\quad
		-\beta\int_0^T\int_{\T}
		\Bflex(\eta)\zeta\,\dd x\dd t.
	\end{align*}
	Moving the second term to the left-hand side, we arrive at the
	uncentered identity
	\begin{align}
	\notag	&\frac12\int_0^T\mathcal H_g(t)\,\dd t
		+\beta\int_0^T\int_{\T}
		\Bflex(\eta)\zeta\,\dd x\dd t+\mathcal{W}
		+\mathcal{O}_{\infty}\\
		\notag		=&
		\int_0^T\int_{\T}
		\left[
		\frac52\chi\psi+(x-m)\psi_x
		\right]
		G(\eta)\psi\,\dd x\dd t
		\quad-\int_0^T\int_{\T}
		P_{\mathrm{ext}}\zeta\,\dd x\dd t\\
		&-\left[
		\int_{\T}\zeta\psi\,\dd x
		\right]_0^T
		\quad+\int_0^T\iint_{\Omega(t)}
		\varrho_x\phi_x\phi_y\,\dd y\dd x\dd t,
		\label{eq:hydro-multiplier-uncentered}
	\end{align}
	where \[\mathcal{W}=2g\int_0^T\int_{\T}\chi\eta^2\,\dd x\dd t,\quad \mathcal{O}_{\infty}=\int_0^T\Sigma_\infty(t)\,\dd t.\]
	It remains to rewrite the three terms involving the spatial means of
	\(\psi\) and \(P_{\mathrm{ext}}\).  Recall that
	\[
	\psit=\psi-\avg{\psi},
	\qquad
	\Pt=P_{\mathrm{ext}}-\avg{P_{\mathrm{ext}}}.
	\]
	By Lemma~\ref{lem:kinetic-boundary},
	\[
	G(\eta)1=0,
	\]
	and therefore
	\[
	G(\eta)\psit=G(\eta)\psi.
	\]
	Moreover,
	\[
	\psit_x=\psi_x.
	\]
	Thus the term
	\[
	\int_{\T}(x-m)\psi_xG(\eta)\psi\,\dd x
	\]
	is unchanged by centering \(\psi\).  The only observation term affected is
	the one containing \(\chi\psi\).
	
	By Lemma~\ref{lem:zero-mode} and the normalization
	\eqref{eq:zero-mean-initial},
	\[
	\int_{\T}\eta(t,x)\,\dd x=0
	\qquad
	(0\leq t\leq T).
	\]
	Define
	\[
	\nu(t)
	:=
	\int_{\T}\chi(x)\eta(t,x)\,\dd x.
	\]
	Since
	\[
	\zeta
	=
	\partial_x(m\eta)
	+\frac52\chi\eta
	-\frac14\eta,
	\]
	periodicity of \(m\eta\) and the zero-mean property of \(\eta\) yield
	\begin{align}
		\int_{\T}\zeta(t,x)\,\dd x
		=
		\frac52\int_{\T}\chi(x)\eta(t,x)\,\dd x=
		\frac52\nu(t).
		\label{eq:zeta-mean-nu}
	\end{align}
	The classical regularity of the solution and the fact that \(\chi\) is
	independent of time justify differentiation under the integral sign.  Using
	the first equation in \eqref{eq:hydroelastic-system},
	\begin{equation*}
		\nu'(t)
		=
		\int_{\T}\chi(x)\eta_t(t,x)\,\dd x
		=
		\int_{\T}\chi(x)G(\eta)\psi(t,x)\,\dd x.
	\end{equation*}
	
	We now compare the right-hand side of
	\eqref{eq:hydro-multiplier-uncentered} with the centered expression in
	\eqref{eq:hydro-multiplier}.  For the first observation term,
	\begin{align*}
		\frac52\int_0^T\int_{\T}
		\chi\psi G(\eta)\psi\,\dd x\dd t
		-
		\frac52\int_0^T\int_{\T}
		\chi\psit G(\eta)\psi\,\dd x\dd t=
		\frac52\int_0^T
		\avg{\psi}(t)\nu'(t)\,\dd t.
	\end{align*}
	For the endpoint term, \eqref{eq:zeta-mean-nu} gives
	\begin{align*}
		-\left[
		\int_{\T}\zeta\psi\,\dd x
		\right]_0^T
		+
		\left[
		\int_{\T}\zeta\psit\,\dd x
		\right]_0^T=
		-\frac52
		\left[
		\avg{\psi}(t)\nu(t)
		\right]_0^T.
	\end{align*}
	Finally, for the pressure term,
	\begin{align*}
		-\int_0^T\int_{\T}
		P_{\mathrm{ext}}\zeta\,\dd x\dd t
		+
		\int_0^T\int_{\T}
		\Pt\zeta\,\dd x\dd t=
		-\frac52
		\int_0^T
		\avg{P_{\mathrm{ext}}}(t)\nu(t)\,\dd t.
	\end{align*}
	Therefore the difference between the uncentered and centered
	right-hand sides is
	\begin{align*}
		&\frac52\int_0^T
		\avg{\psi}(t)\nu'(t)\,\dd t
		-\frac52
		\left[
		\avg{\psi}(t)\nu(t)
		\right]_0^T
		-\frac52\int_0^T
		\avg{P_{\mathrm{ext}}}(t)\nu(t)\,\dd t.
	\end{align*}
	By integration by parts in time,
	\begin{align*}
		\int_0^T\avg{\psi}\,\nu'\,\dd t
		-
		\left[\avg{\psi}\,\nu\right]_0^T
		=
		-\int_0^T
		\frac{\dd}{\dd t}\avg{\psi}\,\nu\,\dd t.
	\end{align*}
	Substituting the integration-by-parts identity into the displayed expression for
	the difference between the uncentered and centered right-hand sides gives
	\[
	-\frac52
	\int_0^T
	\nu(t)
	\left(
	\frac{\dd}{\dd t}\avg{\psi}
	+
	\avg{P_{\mathrm{ext}}}
	\right)\dd t.
	\]
	By the mean evolution identity
	\eqref{eq:mean-psi-evolution},
	\[
	\frac{\dd}{\dd t}\avg{\psi}
	+
	\avg{P_{\mathrm{ext}}}
	=0.
	\]
	Thus the centered and uncentered right-hand sides coincide.  Replacing the
	three corresponding terms in
	\eqref{eq:hydro-multiplier-uncentered} by their centered forms yields
	\eqref{eq:hydro-multiplier}.
\end{proof}

\section{Quantitative estimates for the multiplier identity}\label{sec:estimates}

This section provides the estimates needed to control the right-hand side of
the hydroelastic multiplier identity \eqref{eq:hydro-multiplier}.  We first
bound the surface multiplier \(\zeta\) by the Hamiltonian, then establish the
localized tangential trace estimate \eqref{eq:trace-estimate} and the
mean-zero trace estimate \eqref{eq:mean-zero-trace}, and finally estimate the
remainder terms appearing in \eqref{eq:hydro-multiplier}.  All constants
established in this section are independent of \(T\).

\subsection{Energy control of the surface multiplier}

\begin{lemma}[Energy control of the flexural multiplier]\label{lem:zeta-control}
Assume \(\|\eta_x\|_{L^\infty}\leq\varepsilon_1\).  Put
\[
  A_m=\left\|\frac94-\frac32m_x\right\|_{L^\infty},
\]
and
\begin{equation*}
  C_\zeta
  =\frac{2\|m\|_{L^\infty}^2(1+\varepsilon_1^2)^{5/2}}{\beta}
  +\frac{2\bigl(\|m\|_{L^\infty}^2+2A_m^2\bigr)}{g}.
\end{equation*}
Then
\begin{equation}\label{eq:zeta-control}
  \|\zeta(t)\|_{L^2(\T)}^2\leq C_\zeta\Etot(t).
\end{equation}
\end{lemma}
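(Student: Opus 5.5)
Write \(\zeta=m\eta_x+a\eta\) with \(a=\frac94-\frac32m_x\), as in \eqref{eq:zeta-rho}. The plan is to bound \(\|\zeta\|_{L^2}^2\) by \(\|\eta_x\|_{L^2}^2\) and \(\|\eta\|_{L^2}^2\). Then each piece is controlled by a term of \(\Etot\): the gravity energy \(\frac g2\int\eta^2\), and the bending energy \(\beta\Eflex\), once \(\int\eta_x^2\) is converted into \(\eta_{xx}\) and \(\eta\). The kinetic part is nonnegative by Lemma~\ref{lem:kinetic-boundary}, so it may simply be dropped from the lower bound for \(\Etot\).

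First, pointwise,
\[
\zeta^2\le 2\|m\|_{L^\infty}^2\eta_x^2+2A_m^2\eta^2 .
\]
Second, handle \(\int\eta_x^2\) by periodic integration by parts and Young:
\[
\int_{\T}\eta_x^2\,\dd x=-\int_{\T}\eta\eta_{xx}\,\dd x\le \frac{\beta'}{2}\int_{\T}\eta_{xx}^2\,\dd x+\frac1{2\beta'}\int_{\T}\eta^2\,\dd x .
\]
Next use \(\eta_{xx}^2\le(1+\varepsilon_1^2)^{5/2}\eta_{xx}^2(1+\eta_x^2)^{-5/2}\), exactly as in the proof of Lemma~\ref{lem:flex-coercivity}, to get
\[
\int\eta_{xx}^2\le 2(1+\varepsilon_1^2)^{5/2}\Eflex\le \frac{2(1+\varepsilon_1^2)^{5/2}}{\beta}\Etot .
\]
Together with \(\int\eta^2\le \frac2g\Etot\), this gives the bound.

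For the constant, the simplest choice is a Young weight of \(1\): \(\int\eta_x^2\le\frac12\int\eta_{xx}^2+\frac12\int\eta^2\). This yields
\[
\|\zeta\|^2\le \|m\|^2\int\eta_{xx}^2+\bigl(\|m\|^2+2A_m^2\bigr)\int\eta^2 .
\]
Inserting the two energy bounds gives \(\|m\|^2\frac{2(1+\varepsilon_1^2)^{5/2}}{\beta}\Etot+\bigl(\|m\|^2+2A_m^2\bigr)\frac2g\Etot\), which is precisely \(C_\zeta\).

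There is no real obstacle here. The only point of care is the bookkeeping that makes the stated \(C_\zeta\) come out exactly: the Young splitting with weight \(1\), and the factor \(2\) from \(\Eflex\) carrying the prefactor \(\frac12\).
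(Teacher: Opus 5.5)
Your proposal is correct and follows the paper's proof essentially line by line. The steps match: the pointwise splitting \(\zeta^2\le 2\|m\|_{L^\infty}^2\eta_x^2+2A_m^2\eta^2\), then periodic integration by parts with the weight-one Young inequality for \(\|\eta_x\|_{L^2}^2\), then the bounds \(\|\eta\|_{L^2}^2\le \frac2g\Etot\) and \(\|\eta_{xx}\|_{L^2}^2\le \frac{2(1+\varepsilon_1^2)^{5/2}}{\beta}\Etot\), which together give exactly \(C_\zeta\).
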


\begin{proof}
	From the definition of \(\zeta\),
	\[
	\zeta
	=
	m\eta_x
	+
	\left(\frac94-\frac32m_x\right)\eta.
	\]
	Hence, by \((a+b)^2\leq2a^2+2b^2\),
	\begin{align*}
		\|\zeta\|_{L^2(\T)}^2
		&\leq
		2\|m\eta_x\|_{L^2(\T)}^2
		+
		2\left\|
		\left(\frac94-\frac32m_x\right)\eta
		\right\|_{L^2(\T)}^2
		\\
		&\leq
		2\|m\|_{L^\infty}^2
		\|\eta_x\|_{L^2(\T)}^2
		+
		2A_m^2
		\|\eta\|_{L^2(\T)}^2.
	\end{align*}
	
	Since \(\eta\) is periodic, integration by parts gives
	\[
	\|\eta_x\|_{L^2(\T)}^2
	=
	-\int_{\T}\eta\,\eta_{xx}\,\dd x.
	\]
	Therefore, by Cauchy--Schwarz and Young's inequality,
	\begin{align*}
		\|\eta_x\|_{L^2(\T)}^2
		&\leq
		\|\eta\|_{L^2(\T)}
		\|\eta_{xx}\|_{L^2(\T)}
		\\
		&\leq
		\frac12\|\eta\|_{L^2(\T)}^2
		+
		\frac12\|\eta_{xx}\|_{L^2(\T)}^2.
	\end{align*}
	Substituting this estimate for \(\|\eta_x\|_{L^2(\T)}^2\) into the inequality
	derived from the definition of \(\zeta\) gives
	\begin{align}
		\|\zeta\|_{L^2(\T)}^2
		&\leq
		\|m\|_{L^\infty}^2
		\|\eta_{xx}\|_{L^2(\T)}^2
		+
		\bigl(
		\|m\|_{L^\infty}^2+2A_m^2
		\bigr)
		\|\eta\|_{L^2(\T)}^2.
		\label{eq:zeta-intermediate-control}
	\end{align}
	
	We now estimate the two terms on the right-hand side by the Hamiltonian.
	From \eqref{eq:total-energy} and the nonnegativity of its kinetic and
	flexural parts,
	\[
	\frac g2\|\eta\|_{L^2(\T)}^2
	\leq
	\Etot,
	\]
	and hence
	\begin{equation}\label{eq:eta-L2-energy-control}
		\|\eta\|_{L^2(\T)}^2
		\leq
		\frac2g\Etot.
	\end{equation}
	
	For the second derivative, \eqref{eq:flex-energy} and the assumption
	\(\|\eta_x\|_{L^\infty(\T)}\leq\varepsilon_1\) give
	\[
	(1+\eta_x^2)^{5/2}
	\leq
	(1+\varepsilon_1^2)^{5/2},
	\]
	so that
	\begin{align*}
		\beta\Eflex(\eta)
		&=
		\frac{\beta}{2}
		\int_{\T}
		\frac{\eta_{xx}^2}
		{(1+\eta_x^2)^{5/2}}
		\,\dd x
		\\
		&\geq
		\frac{\beta}
		{2(1+\varepsilon_1^2)^{5/2}}
		\|\eta_{xx}\|_{L^2(\T)}^2.
	\end{align*}
	Since \(\beta\Eflex(\eta)\leq\Etot\), we obtain
	\begin{equation}\label{eq:eta-xx-energy-control}
		\|\eta_{xx}\|_{L^2(\T)}^2
		\leq
		\frac{2(1+\varepsilon_1^2)^{5/2}}{\beta}
		\Etot.
	\end{equation}
	
	Finally, inserting
	\eqref{eq:eta-L2-energy-control} and
	\eqref{eq:eta-xx-energy-control} into
	\eqref{eq:zeta-intermediate-control} gives
	\begin{align*}
		\|\zeta\|_{L^2(\T)}^2
		\leq
		\left[
		\frac{
			2\|m\|_{L^\infty}^2
			(1+\varepsilon_1^2)^{5/2}
		}{\beta}
		+
		\frac{
			2\bigl(
			\|m\|_{L^\infty}^2+2A_m^2
			\bigr)
		}{g}
		\right]
		\Etot
		=C_\zeta\Etot,
	\end{align*}
	which proves \eqref{eq:zeta-control}.
\end{proof}

\subsection{Trace estimates}

\begin{lemma}[Localized tangential trace estimate]\label{lem:trace}
Let \((\eta,\psi,P_{\mathrm{ext}})\) be a solution of
\eqref{eq:hydroelastic-system} on \([0,T]\), and fix \(t\in[0,T]\).
Assume \(\|\eta_x(t)\|_{L^\infty}\leq1\), and set
\[
  D(t)=\int_{\T}\chi\bigl(G(\eta)\psi\bigr)^2\,\dd x.
\]
Then
\begin{align}
  \int_{\T}\chi\psi_x^2\,\dd x
  &\leq8D(t)-8\iint_{\Omega(t)}
  \chi_x\phi_x\phi_y\,\dd y\dd x,
  \label{eq:trace-preliminary}
\end{align}
and hence
\begin{equation}\label{eq:trace-estimate}
  \int_{\T}\chi\psi_x^2\,\dd x
  \leq8D(t)+8\|\chi_x\|_{L^\infty}\Etot(t).
\end{equation}
\end{lemma}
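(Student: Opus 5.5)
The plan is to apply the weighted stress identity, Lemma~\ref{lem:harmonic-stress}, with the periodic weight \(f=\chi\). For each fixed \(t\) this gives
\[
  \int_{\T}\chi N(\eta)\psi\,\dd x=-\iint_{\Omega(t)}\chi_x\phi_x\phi_y\,\dd y\dd x .
\]
We then bound \(\chi\psi_x^2\) pointwise by \(\chi(G(\eta)\psi)^2\) plus a multiple of \(-\chi N(\eta)\psi\). After integration, the \(N\)-term becomes the bulk term in \eqref{eq:trace-preliminary}.

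\emph{Pointwise algebra.} Write \(G=G(\eta)\psi=B-\eta_xV\) and \(\psi_x=V+\eta_xB\), and use \eqref{eq:N-definition} in the form
\[
  N(\eta)\psi=\frac12\psi_x^2-\frac12\frac{(G+\eta_x\psi_x)^2}{1+\eta_x^2}.
\]
Expanding the square and using \((a+b)^2\le 2a^2+2b^2\), we get
\[
  -N(\eta)\psi\ \ge\ -\frac12\psi_x^2+\frac{1}{1+\eta_x^2}\bigl(-G^2-\eta_x^2\psi_x^2\bigr)\cdot 1 .
\]
This lower bound points the wrong way, so instead we solve directly for \(\psi_x^2\):
\[
  \psi_x^2=2N(\eta)\psi+\frac{(G+\eta_x\psi_x)^2}{1+\eta_x^2}\le 2N(\eta)\psi+\frac{2G^2+2\eta_x^2\psi_x^2}{1+\eta_x^2}.
\]
Since \(|\eta_x|\le1\), we have \(2\eta_x^2/(1+\eta_x^2)\le 1\), so absorption fails at the endpoint \(|\eta_x|=1\). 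We therefore split the square more carefully, writing \((a+b)^2\le(1+\epsilon)a^2+(1+\epsilon^{-1})b^2\) with \(b=\eta_x\psi_x\). Choosing \(\epsilon=3\) gives coefficient \(\tfrac43\eta_x^2/(1+\eta_x^2)\le\tfrac23\) on \(\psi_x^2\) when \(\eta_x^2\le1\). This yields
\[
  \tfrac13\psi_x^2\le 2N(\eta)\psi+4G^2,
\]
that is, \(\psi_x^2\le 6N(\eta)\psi+12G^2\). The exact constants \(8,8\) in the statement should come out of a similar optimized split, for instance \(\psi_x^2\le 8G^2+8N(\eta)\psi\) or a variant. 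I will tune \(\epsilon\) so that the constant in front of \(G^2\) is \(8\) and the constant in front of \(N\) is at most \(8\). One subtlety is that the \(N\)-term has an indefinite sign, so its coefficient must match exactly rather than be dominated. I will therefore aim for the exact inequality \(\psi_x^2\le 8G^2+8N(\eta)\psi\), obtained by choosing the splitting parameter so that the leftover coefficient on \(\psi_x^2\) is \(1/4\). Indeed, with \(\psi_x^2(1-c\eta_x^2/(1+\eta_x^2))\le 2N+c'G^2/(1+\eta_x^2)\) and \(c=(1+\epsilon^{-1})\), \(\eta_x^2/(1+\eta_x^2)\le1/2\), one takes \(\epsilon=1/2\), which gives \(c=3\), a coefficient \(1/4\) on the left, and \(c'=3/2\). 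Multiplying by \(4\) gives \(\psi_x^2\le 8N(\eta)\psi+6G^2\le 8N(\eta)\psi+8G^2\).

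Multiplying by \(\chi\ge0\), integrating over \(\T\), and substituting the stress identity gives \eqref{eq:trace-preliminary}.

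\emph{Second estimate.} For \eqref{eq:trace-estimate} we bound
\[
  \Bigl|\iint_{\Omega(t)}\chi_x\phi_x\phi_y\Bigr|\le \|\chi_x\|_{L^\infty}\cdot\frac12\iint_{\Omega(t)}|\nabla\phi|^2 .
\]
By Lemma~\ref{lem:kinetic-boundary}, the right-hand side is \(\|\chi_x\|_{L^\infty}\mathcal K\le\|\chi_x\|_{L^\infty}\Etot(t)\), which suffices. Here \(\chi_x\) is extended independently of \(y\) and is bounded.

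The main obstacle is the sign bookkeeping: \(N(\eta)\psi\) is indefinite, so the inequality has to be arranged with exactly the coefficient \(8\) multiplying \(N\) before the stress identity is invoked. The hypothesis \(|\eta_x|\le1\) is used only to bound \(\eta_x^2/(1+\eta_x^2)\le1/2\) in the absorption step.
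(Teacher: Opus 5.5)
\textbf{Gap in the key pointwise step.} Your overall structure is the paper's. You apply Lemma~\ref{lem:harmonic-stress} with $f=\chi$, prove a pointwise bound $\psi_x^2\le 8G^2+8N(\eta)\psi$ (writing $G=G(\eta)\psi$) in which the indefinite term $N(\eta)\psi$ has coefficient exactly $8$, integrate against $\chi\ge0$, and then use $\frac12\iint_{\Omega(t)}|\nabla\phi|^2=\Kfluid\le\Etot$ for \eqref{eq:trace-estimate}. That last step is fine. The problem is the pointwise inequality, which is the only nontrivial step.

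With $\epsilon=1/2$ you have $c=1+\epsilon^{-1}=3$. The coefficient left on $\psi_x^2$ is then $1-3\eta_x^2/(1+\eta_x^2)$, which equals $-\tfrac12$ at $|\eta_x|=1$, not $\tfrac14$; it is already negative once $\eta_x^2>\tfrac12$. So nothing can be absorbed.

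No fixed $\epsilon$ rescues this scheme on its own. A left coefficient $\ge\tfrac14$ at $\eta_x^2=1$ requires $1+\epsilon^{-1}\le\tfrac32$, i.e.\ $\epsilon\ge2$. After multiplying by $4$, the $G^2$-coefficient $4(1+\epsilon)/(1+\eta_x^2)$ is $\le 8$ at $\eta_x=0$ only if $\epsilon\le1$. The admissible fixed choices give $\psi_x^2\le 8N(\eta)\psi+12G^2$ (with $\epsilon=2$), or your $\psi_x^2\le 6N(\eta)\psi+12G^2$ (with $\epsilon=3$, wrong $N$-coefficient). As written, your argument therefore proves \eqref{eq:trace-preliminary} only with $12D(t)$ in place of $8D(t)$.

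\textbf{How to close it.} The obstruction comes from working in the variables $(\psi_x,G)$, which forces you to absorb $\eta_x^2\psi_x^2$. The paper uses $V=\phi_x|_{y=\eta}$ and $G$ instead, where everything is exact: $(1+\eta_x^2)V^2=G^2+2N(\eta)\psi$ and $\psi_x=(1+\eta_x^2)V+\eta_xG$. For $|\eta_x|\le1$ this gives $\psi_x^2\le 2(1+\eta_x^2)^2V^2+2\eta_x^2G^2\le 4(1+\eta_x^2)V^2+4G^2=8N(\eta)\psi+8G^2$, with no absorption at all.

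If you prefer to stay in your variables, there are two repairs.
\begin{enumerate}
\item Let the split depend on the slope: $\epsilon=1+2\eta_x^2$ gives $\psi_x^2/(1+2\eta_x^2)\le 2N(\eta)\psi+2G^2$. The claim follows after multiplying by $4$ and using $1+2\eta_x^2\le 4$.
\item Keep $\epsilon=2$, which gives $\psi_x^2\le \frac{4(1+\eta_x^2)}{2-\eta_x^2}N(\eta)\psi+\frac{6}{2-\eta_x^2}G^2$. The $N$-coefficient here is $\le 8$; raise it to exactly $8$ using the sign information $2N(\eta)\psi+G^2=(1+\eta_x^2)V^2\ge0$. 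This yields your (sharp) bound $\psi_x^2\le 8N(\eta)\psi+6G^2$.
\end{enumerate}
You need one of these extra ingredients; the computation you wrote does not deliver the inequality.
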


\begin{proof}
With \(V=\phi_x|_{y=\eta}\), \(B=\phi_y|_{y=\eta}\), and
\(G=G(\eta)\psi\), one has
\[
  \psi_x=V+\eta_xB,
  \qquad G=B-\eta_xV,
  \qquad
  (1+\eta_x^2)V^2=G^2+2N(\eta)\psi.
\]
Multiply the last identity by \(\chi\) and use
Lemma~\ref{lem:harmonic-stress}:
\[
  \int_{\T}\chi(1+\eta_x^2)V^2\,\dd x
  =D(t)-2\iint_{\Omega(t)}\chi_x\phi_x\phi_y\,\dd y\dd x.
\]
Moreover, from \(B=G+\eta_xV\) we have
\[
  \psi_x=(1+\eta_x^2)V+\eta_xG.
\]
When \(|\eta_x|\leq1\), the elementary inequality
\((a+b)^2\leq2a^2+2b^2\) gives
\[
  \psi_x^2
  \leq2(1+\eta_x^2)^2V^2+2\eta_x^2G^2
  \leq4(1+\eta_x^2)V^2+4G^2.
\]
This proves \eqref{eq:trace-preliminary}.  Finally,
\[
  8\left|\iint\chi_x\phi_x\phi_y\right|
  \leq4\|\chi_x\|_\infty\iint|\nabla\phi|^2
  \leq8\|\chi_x\|_\infty\Etot,
\]
which gives \eqref{eq:trace-estimate}.
\end{proof}

\begin{lemma}[Mean-zero trace estimate]
	\label{lem:mean-zero-trace}
	Let \(\eta\) be a smooth \(2L\)-periodic graph, and let \(\psi\) have the
	finite-energy harmonic extension \(\phi\) used in the definition of
	\(G(\eta)\).  Assume that
	\[
	\|\eta_x\|_{L^\infty(\T)}\leq M.
	\]
	Then
	\begin{equation}\label{eq:mean-zero-trace}
		\|\psit\|_{L^2(\T)}^2
		\leq
		(2+M^2)\frac{L}{\pi}
		\int_{\T}\psi G(\eta)\psi\,\dd x.
	\end{equation}
\end{lemma}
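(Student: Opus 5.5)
We prove the bound by comparing the surface trace with the trace on a flat horizontal line lying strictly below the interface. By Lemma~\ref{lem:kinetic-boundary}, the right-hand side of \eqref{eq:mean-zero-trace} equals \((2+M^2)\frac{L}{\pi}\iint_{\Omega}|\nabla\phi|^2\,\dd y\dd x\). It therefore suffices to bound \(\|\psit\|_{L^2}^2\) by a multiple of the Dirichlet energy.

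Fix a level \(h<\min_{\T}\eta\) and write
\[
  \psi(x)-c=\bigl(\phi(x,\eta(x))-\phi(x,h)\bigr)+\bigl(\phi(x,h)-c\bigr),
  \qquad c=\avg{\phi(\cdot,h)}.
\]
Since \(\|\psit\|_{L^2}\leq\|\psi-c\|_{L^2}\) for any constant \(c\), it is enough to bound the two pieces on the right.

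\emph{The flat piece.} On the half-cylinder below \(h\), the Fourier representation in the proof of Lemma~\ref{lem:deep-decay} gives \(\widehat\phi_n(y)=a_n\mathrm e^{|k_n|(y-h)}\). Hence
\[
  \|\phi(\cdot,h)-c\|_{L^2}^2=2L\sum_{n\neq0}|a_n|^2,
  \qquad
  \iint_{y<h}|\nabla\phi|^2=2L\sum_{n\neq0}|k_n||a_n|^2 .
\]
Because \(|k_n|\geq\pi/L\), this yields
\[
  \|\phi(\cdot,h)-c\|_{L^2}^2\leq\frac{L}{\pi}\iint_{y<h}|\nabla\phi|^2 .
\]

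\emph{The vertical-difference piece.} Write it as \(\int_h^{\eta(x)}\phi_y(x,y)\,\dd y\). By Cauchy--Schwarz its square is at most \((\eta(x)-h)\int_h^{\eta(x)}\phi_y^2\,\dd y\). This bound depends on the strip height \(\eta-h\), which is not controlled by \(M\) alone, so this crude estimate has to be improved.

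\emph{The main obstacle} is to replace the strip height by something controlled by the slope bound, which is where the factor \(2+M^2\) should come from. I would first try a change of variables that flattens the boundary. Set \(\tilde\phi(x,z)=\phi(x,z+\eta(x))\) for \(z<0\); its Dirichlet energy is comparable to that of \(\phi\), with constant depending on \(M\). A direct computation gives
\[
  \tilde\phi_x^2+\tilde\phi_z^2=(\phi_x+\eta_x\phi_y)^2+\phi_y^2\leq(2+M^2)\,|\nabla\phi|^2 ,
\]
using \((a+b)^2\leq 2a^2+2b^2\) with \(|\eta_x|\leq M\) absorbed. The Jacobian of the change of variables is \(1\), so \(\iint|\nabla\tilde\phi|^2\leq(2+M^2)\iint|\nabla\phi|^2\).

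The trace of \(\tilde\phi\) at \(z=0\) is exactly \(\psi\), but \(\tilde\phi\) is no longer harmonic. We therefore need the Poincaré-type trace inequality on the flat half-cylinder \(\T\times(-\infty,0)\) for arbitrary finite-energy functions, not just harmonic ones:
\[
  \|u(\cdot,0)-\avg{u(\cdot,0)}\|_{L^2}^2\leq\frac{L}{\pi}\iint|\nabla u|^2 .
\]
This holds mode by mode. For \(n\neq0\), \(|\widehat u_n(0)|^2\leq 2\int_{-\infty}^0|\widehat u_n||\widehat u_n'|\,\dd z\leq\frac1{|k_n|}\int(|\widehat u_n'|^2+k_n^2|\widehat u_n|^2)\,\dd z\), and \(|k_n|\geq\pi/L\) gives the factor \(L/\pi\). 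The \(n=0\) mode is removed by subtracting the mean.

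Applying this inequality to \(\tilde\phi\) gives
\[
  \|\psit\|_{L^2}^2\leq\frac{L}{\pi}(2+M^2)\iint|\nabla\phi|^2 ,
\]
which is \eqref{eq:mean-zero-trace}. Strictly, this route bypasses the \(h\)-splitting and the strip-height difficulty altogether, so the splitting above serves only as motivation. The remaining step is to justify that the modewise trace inequality applies to \(\tilde\phi\). This follows from \(\nabla\tilde\phi\in L^2\), together with the exponential decay of Lemma~\ref{lem:deep-decay}, which ensures \(\widehat{\tilde\phi}_n(z)\to0\) as \(z\to-\infty\).
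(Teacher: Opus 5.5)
Your final argument is the paper's proof: flatten by \(\tilde\phi(x,z)=\phi(x,z+\eta(x))\), compare the two Dirichlet integrands pointwise, then apply the modewise trace inequality \(|\widehat u_n(0)|^2\le|k_n|^{-1}\int_{-\infty}^0(|\widehat u_n'|^2+k_n^2|\widehat u_n|^2)\,\dd z\) on the flat half-cylinder. The paper writes the comparison as \((u_x-\eta_xu_z)^2+u_z^2\ge(2+M^2)^{-1}(u_x^2+u_z^2)\), which is the same inequality as yours read in the other direction.

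The one step that fails as justified is the constant. Using \((a+b)^2\le2a^2+2b^2\) gives
\[
(\phi_x+\eta_x\phi_y)^2+\phi_y^2\le2\phi_x^2+(1+2\eta_x^2)\phi_y^2\le\max\{2,1+2M^2\}\,|\nabla\phi|^2 .
\]
Since \(1+2M^2>2+M^2\) as soon as \(M>1\), for general \(M\) you only get the weaker constant \(1+2M^2\). For \(M=1\), the only value used later in the paper, the two constants coincide.

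The inequality with \(2+M^2\) is nevertheless true, and the fix is one line. You can bound the cross term as \(2|\eta_x\phi_x\phi_y|\le\eta_x^2\phi_x^2+\phi_y^2\), which gives \((1+\eta_x^2)\phi_x^2+(2+\eta_x^2)\phi_y^2\le(2+M^2)|\nabla\phi|^2\). Alternatively, argue as the paper does: the symmetric matrix of the quadratic form has determinant \(1\) and trace \(2+\eta_x^2\), so its largest eigenvalue is at most \(2+M^2\).

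On the decay \(\widehat{\tilde\phi}_n(z)\to0\) for \(n\neq0\): Lemma~\ref{lem:deep-decay} controls \(\phi\) on horizontal lines, not on the curves \(y=z+\eta(x)\). Using it here would require its pointwise form via Sobolev embedding. The other ingredient you name already suffices, and this is what the paper uses: finite energy gives \(\widehat{\tilde\phi}_n\in H^1(-\infty,0)\) for \(n\neq0\), and such functions tend to zero at \(-\infty\).
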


\begin{proof}
	Flatten the fluid domain by the change of variables
	\[
	y=z+\eta(x),
	\qquad
	u(x,z)=\phi(x,z+\eta(x)),
	\qquad
	(x,z)\in\T\times(-\infty,0).
	\]
	The Jacobian of this transformation is one, and
	\[
	u(x,0)=\phi(x,\eta(x))=\psi(x).
	\]
	Moreover,
	\[
	\phi_x=u_x-\eta_xu_z,
	\qquad
	\phi_y=u_z.
	\]
	Hence the boundary representation of the kinetic energy
	\eqref{eq:kinetic-boundary}, followed by the change of variables
	\(y=z+\eta(x)\),
	gives
	\begin{equation}\label{eq:flattened-energy}
		\int_{\T}\psi G(\eta)\psi\,\dd x
		=
		\int_{\T}\int_{-\infty}^{0}
		\left[
		(u_x-\eta_xu_z)^2+u_z^2
		\right]\dd z\dd x.
	\end{equation}
	
	The quadratic form in the integrand is associated with the symmetric matrix
	\[
	\begin{pmatrix}
		1&-\eta_x\\
		-\eta_x&1+\eta_x^2
	\end{pmatrix}.
	\]
	Its determinant is one and its trace satisfies
	\[
	2+\eta_x^2\leq2+M^2.
	\]
	Since the matrix is positive definite, if
	\(\lambda_{\min}\leq\lambda_{\max}\) denote its eigenvalues, then
	\[
	\lambda_{\min}\lambda_{\max}=1,
	\qquad
	\lambda_{\max}
	\leq
	\lambda_{\min}+\lambda_{\max}
	=
	2+\eta_x^2
	\leq2+M^2.
	\]
	Consequently,
	\[
	\lambda_{\min}
	=
	\frac1{\lambda_{\max}}
	\geq
	\frac1{2+M^2}.
	\]
	Therefore, pointwise on
	\(\T\times(-\infty,0)\),
	\begin{equation}\label{eq:flattened-coercivity}
		(u_x-\eta_xu_z)^2+u_z^2
		\geq
		\frac1{2+M^2}
		\left(u_x^2+u_z^2\right).
	\end{equation}
	
	We next expand \(u\) in Fourier series with respect to the periodic
	variable \(x\):
	\[
	u(x,z)
	=
	\sum_{n\in\mathbb Z}
	u_n(z)\mathrm e^{ik_nx},
	\qquad
	k_n=\frac{\pi n}{L},
	\]
	where
	\[
	u_n(z)
	=
	\frac1{2L}
	\int_{\T}u(x,z)\mathrm e^{-ik_nx}\,\dd x.
	\]
	By the finite Dirichlet energy condition
	\eqref{eq:finite-dirichlet-energy}, together with
	\eqref{eq:flattened-energy} and
	\eqref{eq:flattened-coercivity},
	\[
	\int_{\T}\int_{-\infty}^{0}
	\left(
	|u_x|^2+|u_z|^2
	\right)\dd z\dd x
	<\infty.
	\]
	Applying Parseval's identity in the \(x\)-variable yields
	\[
	2L
	\sum_{n\in\mathbb Z}
	\int_{-\infty}^{0}
	\left(
	k_n^2|u_n(z)|^2+|u_n'(z)|^2
	\right)\dd z
	<\infty.
	\]
	In particular, for every \(n\neq0\), since \(k_n\neq0\),
	\[
	\int_{-\infty}^{0}
	\left(
	|u_n(z)|^2+|u_n'(z)|^2
	\right)\dd z
	<\infty.
	\]
	Thus
	\[
	u_n\in H^1(-\infty,0),
	\qquad n\neq0.
	\]
	
	We also record explicitly the behavior of these modes at infinite depth.
	If \(v\in H^1(-\infty,0)\), then \(v\) has an absolutely continuous
	representative on every compact subinterval.  Since \(v\in L^2(-\infty,0)\),
	there exists a sequence \(z_j\to-\infty\) such that
	\(v(z_j)\to0\).  For fixed \(z<0\), the fundamental theorem of calculus
	gives
	\[
	|v(z)|^2-|v(z_j)|^2
	=
	2\operatorname{Re}
	\int_{z_j}^{z}
	v'(s)\overline{v(s)}\,\dd s.
	\]
	Letting \(j\to\infty\), and using
	\(v'v\in L^1(-\infty,0)\), we obtain
	\[
	|v(z)|^2
	=
	2\operatorname{Re}
	\int_{-\infty}^{z}
	v'(s)\overline{v(s)}\,\dd s.
	\]
	Hence
	\[
	|v(z)|^2
	\leq
	2
	\|v'\|_{L^2(-\infty,z)}
	\|v\|_{L^2(-\infty,z)}
	\longrightarrow0
	\qquad
	\text{as }z\to-\infty.
	\]
	Applying this observation to \(v=u_n\) shows that
	\[
	u_n(z)\longrightarrow0
	\qquad
	\text{as }z\to-\infty,
	\qquad n\neq0.
	\]
	
	We may therefore integrate
	\(\frac{\dd}{\dd z}|u_n(z)|^2
	=2\operatorname{Re}(u_n'\overline{u_n})\)
	from \(-\infty\) to \(0\).  For every \(n\neq0\),
	\begin{align*}
		|u_n(0)|^2
		&=
		2\operatorname{Re}
		\int_{-\infty}^{0}
		u_n'(z)\overline{u_n(z)}\,\dd z
		\\
		&\leq
		2
		\int_{-\infty}^{0}
		|u_n'(z)|\,|u_n(z)|\,\dd z
		\\
		&\leq
		\frac1{|k_n|}
		\int_{-\infty}^{0}
		\left(
		|u_n'(z)|^2+k_n^2|u_n(z)|^2
		\right)\dd z.
	\end{align*}
	Since \(n\neq0\),
	\[
	|k_n|=\frac{\pi|n|}{L}\geq\frac{\pi}{L},
	\]
	and hence
	\begin{equation}\label{eq:mode-trace-bound}
		|u_n(0)|^2
		\leq
		\frac{L}{\pi}
		\int_{-\infty}^{0}
		\left(
		|u_n'(z)|^2+k_n^2|u_n(z)|^2
		\right)\dd z.
	\end{equation}
	
	It remains to identify the nonzero Fourier modes with \(\psit\).
	Since \(u(x,0)=\psi(x)\),
	\[
	u_n(0)
	=
	\frac1{2L}
	\int_{\T}
	\psi(x)\mathrm e^{-ik_nx}\,\dd x.
	\]
	The zero Fourier coefficient is therefore
	\[
	u_0(0)
	=
	\frac1{2L}\int_{\T}\psi(x)\,\dd x
	=
	\avg{\psi}.
	\]
	Consequently,
	\[
	\psit(x)
	=
	\psi(x)-\avg{\psi}
	=
	\sum_{n\neq0}
	u_n(0)\mathrm e^{ik_nx}.
	\]
	By Parseval and \eqref{eq:mode-trace-bound},
	\begin{align*}
		\|\psit\|_{L^2(\T)}^2
		&=
		2L\sum_{n\neq0}|u_n(0)|^2
		\\
		&\leq
		\frac{L}{\pi}\,
		2L
		\sum_{n\neq0}
		\int_{-\infty}^{0}
		\left(
		|u_n'(z)|^2+k_n^2|u_n(z)|^2
		\right)\dd z
		\\
		&\leq
		\frac{L}{\pi}
		\int_{\T}\int_{-\infty}^{0}
		\left(
		|u_z(x,z)|^2+|u_x(x,z)|^2
		\right)\dd z\dd x.
	\end{align*}
	Using \eqref{eq:flattened-coercivity} and then
	\eqref{eq:flattened-energy}, we conclude that
	\begin{align*}
		\|\psit\|_{L^2(\T)}^2
		&\leq
		(2+M^2)\frac{L}{\pi}
		\int_{\T}\int_{-\infty}^{0}
		\left[
		(u_x-\eta_xu_z)^2+u_z^2
		\right]\dd z\dd x
		\\
		&=
		(2+M^2)\frac{L}{\pi}
		\int_{\T}\psi G(\eta)\psi\,\dd x,
	\end{align*}
	which is \eqref{eq:mean-zero-trace}.
\end{proof}

\subsection{Estimates for the remainder terms}

We next estimate the four terms on the right-hand side of
\eqref{eq:hydro-multiplier}.
Lemma~\ref{lem:remainder} separates the contributions controlled by the
initial energy from those absorbed into the coercive left-hand side of
\eqref{eq:hydro-multiplier}.

\begin{lemma}\label{lem:remainder}
Assume \(\|\eta_x\|_{L^\infty}\leq1\).  Let \(A_m\) be as in
Lemma~\ref{lem:zeta-control}, and set
\begin{equation*}
  C_\zeta^\sharp
  :=\frac{2^{7/2}\|m\|_{L^\infty}^2}{\beta}
  +\frac{2\bigl(\|m\|_{L^\infty}^2+2A_m^2\bigr)}{g}.
\end{equation*}
Thus Lemma~\ref{lem:zeta-control}, applied with \(\varepsilon_1=1\), gives
\(\|\zeta(t)\|_2^2\leq C_\zeta^\sharp\Etot(t)\) whenever
\(\|\eta_x\|_\infty\leq1\).  Define
\begin{align}
  A_{-1}&=\frac{\lambda \|\chi\|_{L^\infty}}{2}
  +\frac{75L}{8\pi\lambda}\|\chi\|_{L^\infty}
  +\frac{L^2}{4\lambda},\label{eq:A-minus}\\
  A_{+1}&=\frac8\lambda,\label{eq:A-plus}\\
  A_E&=\frac{C_\zeta^\sharp}{2}+1+8\|\chi_x\|_{L^\infty},\label{eq:A-E}\\
  C_{\mathrm{end}}&=2\sqrt{2C_\zeta^\sharp \frac{3L}{\pi}}.
  \notag
\end{align}
For every \(a\in(0,1]\), we have
\begin{align}
 \notag &\left|\int_0^T\int_{\T}
  \left[\frac52\chi\psit+(x-m)\psi_x\right]
  G(\eta)\psi\,\dd x\dd t\right|
  +\left|\int_0^T\int_{\T}\Pt\zeta\,\dd x\dd t\right|\\
 \leq&
  \left(\frac{A_{-1}}a+aA_{+1}\right)\Etot(0)
 \quad+aA_E\int_0^T\Etot(t)\,\dd t.
  \label{eq:remainder-estimate}
\end{align}
Furthermore,
\begin{equation}\label{eq:endpoint-estimate}
  \left|\left[\int_{\T}\zeta\psit\,\dd x\right]_0^T\right|
  \leq C_{\mathrm{end}}\Etot(0),
\end{equation}
and
\begin{equation}\label{eq:bulk-estimate}
  \left|\int_0^T\iint_{\Omega(t)}
  \varrho_x\phi_x\phi_y\,\dd y\dd x\dd t\right|
  \leq\|\varrho_x\|_{L^\infty([0,T]\times[-L,L])}
  \int_0^T\Etot(t)\,\dd t.
\end{equation}
\end{lemma}

\begin{proof}
	Recall that
	\[
	D(t)
	:=
	\int_{\T}
	\chi(x)\bigl(G(\eta)\psi\bigr)^2(t,x)\,\dd x.
	\]
	By the feedback law \eqref{eq:feedback}, we have
	\[
	P_{\mathrm{ext}}
	=
	\lambda\chi G(\eta)\psi.
	\]
	Recall that
	\[
	\Pt(t)
	=
	P_{\mathrm{ext}}(t)-\avg{P_{\mathrm{ext}}(t)}
	\]
	is the mean-zero part of the exterior pressure.
	Since subtraction of the spatial mean is the orthogonal projection of
	\(L^2(\T)\) onto its mean-zero subspace, for every \(t\in[0,T]\),
	\[
	\|\Pt(t)\|_{L^2(\T)}
	\leq
	\|P_{\mathrm{ext}}(t)\|_{L^2(\T)}.
	\]
	Using \(\chi\geq0\), we therefore obtain
	\begin{align*}
		\|\Pt(t)\|_{L^2(\T)}^2
		&\leq
		\lambda^2
		\int_{\T}
		\chi(x)^2
		\bigl(G(\eta)\psi\bigr)^2(t,x)\,\dd x
		\\
		&\leq
		\lambda^2\|\chi\|_{L^\infty(\T)}
		\int_{\T}
		\chi(x)
		\bigl(G(\eta)\psi\bigr)^2(t,x)\,\dd x
		\\
		&=
		\lambda^2\|\chi\|_{L^\infty(\T)}D(t).
	\end{align*}
	The dissipation identity \eqref{eq:energy-dissipation} gives
	\[
	\Etot(T)-\Etot(0)
	=
	-\lambda\int_0^T D(t)\,\dd t.
	\]
	Since \(\Etot(T)\geq0\),
	\begin{equation}\label{eq:D-time-control-proof}
		\int_0^T D(t)\,\dd t
		\leq
		\frac{\Etot(0)}{\lambda}.
	\end{equation}
	Consequently,
	\begin{align}
		\int_0^T
		\|\Pt(t)\|_{L^2(\T)}^2\,\dd t
		&\leq
		\lambda^2\|\chi\|_{L^\infty(\T)}
		\int_0^T D(t)\,\dd t
		\notag\\
		&\leq
		\lambda\|\chi\|_{L^\infty(\T)}
		\Etot(0).
		\label{eq:Ptilde-time-control-proof}
	\end{align}
	
	We first estimate the pressure contribution.  By the Cauchy--Schwarz
	inequality in space and time,
	\begin{align*}
		\left|
		\int_0^T\int_{\T}
		\Pt(t,x)\zeta(t,x)\,\dd x\dd t
		\right|
		\leq
		\left(
		\int_0^T
		\|\Pt(t)\|_{L^2(\T)}^2\,\dd t
		\right)^{\frac{1}{2}}\left(
		\int_0^T
		\|\zeta(t)\|_{L^2(\T)}^2\,\dd t
		\right)^{\frac{1}{2}}.
	\end{align*}
	For \(a\in(0,1]\), Young's inequality gives
	\begin{align*}
		\left|
		\int_0^T\int_{\T}
		\Pt\zeta\,\dd x\dd t
		\right|
		&\leq
		\frac1{2a}
		\int_0^T
		\|\Pt(t)\|_{L^2(\T)}^2\,\dd t
		+
		\frac a2
		\int_0^T
		\|\zeta(t)\|_{L^2(\T)}^2\,\dd t.
	\end{align*}
	By \eqref{eq:Ptilde-time-control-proof} and
	Lemma~\ref{lem:zeta-control}, applied with \(\varepsilon_1=1\),
	\[
	\|\zeta(t)\|_{L^2(\T)}^2
	\leq
	C_\zeta^\sharp\Etot(t),
	\qquad 0\leq t\leq T.
	\]
	Hence
	\begin{equation}\label{eq:pressure-remainder-proof}
		\left|
		\int_0^T\int_{\T}
		\Pt\zeta\,\dd x\dd t
		\right|
		\leq
		\frac{\lambda\|\chi\|_{L^\infty(\T)}}{2a}
		\Etot(0)
		+
		\frac{aC_\zeta^\sharp}{2}
		\int_0^T\Etot(t)\,\dd t.
	\end{equation}
	
	We next estimate the first observation term.  Since
	\(\|\eta_x(t)\|_{L^\infty(\T)}\leq1\), the mean-zero trace estimate
	\eqref{eq:mean-zero-trace}, with \(M=1\), yields
	\begin{align*}
		\|\psit(t)\|_{L^2(\T)}^2
		&\leq
		\frac{3L}{\pi}
		\int_{\T}
		\psi(t,x)G(\eta)\psi(t,x)\,\dd x
		\\
		&\leq
		\frac{6L}{\pi}\Etot(t).
	\end{align*}
	For every \(t\in[0,T]\), the Cauchy--Schwarz inequality therefore gives
	\begin{align*}
		\left|
		\frac52
		\int_{\T}
		\chi(x)\psit(t,x)
		G(\eta)\psi(t,x)\,\dd x
		\right|
		&\leq
		\frac52
		\left(
		\int_{\T}
		\chi(x)\psit(t,x)^2\,\dd x
		\right)^{\frac{1}{2}}
		D(t)^{\frac{1}{2}}
		\\
		&\leq
		\frac52
		\|\chi\|_{L^\infty(\T)}^{\frac{1}{2}}
		\|\psit(t)\|_{L^2(\T)}
		D(t)^{\frac{1}{2}}
		\\
		&\leq
		\frac52
		\sqrt{
			\frac{6L}{\pi}
			\|\chi\|_{L^\infty(\T)}
		}\,
		\Etot(t)^{\frac{1}{2}}D(t)^{\frac{1}{2}}.
	\end{align*}
	Applying Young's inequality in the form
	\[
	AB\leq aA^2+\frac1{4a}B^2
	\]
	gives
	\begin{align}
		\left|
		\frac52
		\int_{\T}
		\chi\psit G(\eta)\psi\,\dd x
		\right|
		\leq
		a\Etot(t)
		+
		\frac{75L}{8\pi a}
		\|\chi\|_{L^\infty(\T)}D(t).
		\label{eq:first-observation-proof}
	\end{align}
	Integrating \eqref{eq:first-observation-proof} over \(t\in[0,T]\) and
	using \eqref{eq:D-time-control-proof}, we obtain
	\begin{align}
		\left|
		\frac52
		\int_0^T\int_{\T}
		\chi\psit G(\eta)\psi\,\dd x\dd t
		\right|
		\leq
		a\int_0^T\Etot(t)\,\dd t
		+
		\frac{75L}{8\pi\lambda a}
		\|\chi\|_{L^\infty(\T)}
		\Etot(0).
		\label{eq:first-observation-time-proof}
	\end{align}
	
	For the second observation term, Lemma~\ref{lem:cutoff} gives
	\[
	|x-m(x)|\leq L\chi(x)
	\qquad
	\text{for }x\in[-L,L].
	\]
	Thus, for every \(t\in[0,T]\),
	\begin{align*}
		\left|
		\int_{\T}
		(x-m(x))\psi_x(t,x)
		G(\eta)\psi(t,x)\,\dd x
		\right|
		&\leq
		L
		\int_{\T}
		\chi(x)
		|\psi_x(t,x)|
		|G(\eta)\psi(t,x)|\,\dd x
		\\
		&\leq
		L
		\left(
		\int_{\T}
		\chi(x)\psi_x(t,x)^2\,\dd x
		\right)^{\frac{1}{2}}
		D(t)^{\frac{1}{2}}.
	\end{align*}
	Young's inequality yields
	\begin{align*}
		\left|
		\int_{\T}
		(x-m)\psi_xG(\eta)\psi\,\dd x
		\right|
		\leq
		a
		\int_{\T}
		\chi\psi_x^2\,\dd x
		+
		\frac{L^2}{4a}D(t).
	\end{align*}
	The localized tangential trace estimate
	\eqref{eq:trace-estimate} gives
	\[
	\int_{\T}
	\chi(x)\psi_x(t,x)^2\,\dd x
	\leq
	8D(t)
	+
	8\|\chi_x\|_{L^\infty(\T)}\Etot(t).
	\]
	Consequently,
	\begin{align}
		\left|
		\int_{\T}
		(x-m)\psi_xG(\eta)\psi\,\dd x
		\right|
		\leq
		\left(
		8a+\frac{L^2}{4a}
		\right)D(t)
		+
		8a\|\chi_x\|_{L^\infty(\T)}\Etot(t).
		\label{eq:second-observation-proof}
	\end{align}
	Integrating \eqref{eq:second-observation-proof} over \(t\in[0,T]\) and
	using \eqref{eq:D-time-control-proof}, we obtain
	\begin{align}
		\left|
		\int_0^T\int_{\T}
		(x-m)\psi_xG(\eta)\psi\,\dd x\dd t
		\right|
		\leq
		\left(
		\frac{8a}{\lambda}
		+
		\frac{L^2}{4\lambda a}
		\right)\Etot(0)
		+
		8a\|\chi_x\|_{L^\infty(\T)}
		\int_0^T\Etot(t)\,\dd t.
		\label{eq:second-observation-time-proof}
	\end{align}
	
	By the triangle inequality,
	\begin{align*}
		\left|
		\int_0^T\int_{\T}
		\left[
		\frac52\chi\psit+(x-m)\psi_x
		\right]
		G(\eta)\psi\,\dd x\dd t
		\right|
		&\leq
		\left|
		\frac52
		\int_0^T\int_{\T}
		\chi\psit G(\eta)\psi\,\dd x\dd t
		\right|
		\\
		&\quad+
		\left|
		\int_0^T\int_{\T}
		(x-m)\psi_xG(\eta)\psi\,\dd x\dd t
		\right|.
	\end{align*}
	Set \[\mathcal{Q}=\int_0^T\int_{\T}
	\left[
	\frac52\chi\psit+(x-m)\psi_x
	\right]
	G(\eta)\psi\,\dd x\dd t.\]
	Combining
	\eqref{eq:pressure-remainder-proof},
	\eqref{eq:first-observation-time-proof}, and
	\eqref{eq:second-observation-time-proof}, we find
	\begin{align*}
		\left|
		\mathcal{Q}
		\right|
		+
		\left|
		\int_0^T\int_{\T}
		\Pt\zeta\,\dd x\dd t
		\right|
		&\leq
		\Bigg[
		\frac1a
		\left(
		\frac{\lambda\|\chi\|_{L^\infty(\T)}}2
		+
		\frac{75L}{8\pi\lambda}
		\|\chi\|_{L^\infty(\T)}
		+
		\frac{L^2}{4\lambda}
		\right)
		+
		\frac{8a}{\lambda}
		\Bigg]\Etot(0)
		\\
		&\quad+
		a
		\left(
		\frac{C_\zeta^\sharp}{2}
		+1
		+8\|\chi_x\|_{L^\infty(\T)}
		\right)
		\int_0^T\Etot(t)\,\dd t.
	\end{align*}
	By the definitions
	\eqref{eq:A-minus}--\eqref{eq:A-E}, this is precisely
	\eqref{eq:remainder-estimate}.
	
	We now estimate the endpoint contribution.  For every \(t\in[0,T]\),
	Cauchy--Schwarz, Lemma~\ref{lem:zeta-control}, and
	\eqref{eq:mean-zero-trace} with \(M=1\) give
	\begin{align*}
		\left|
		\int_{\T}
		\zeta(t,x)\psit(t,x)\,\dd x
		\right|
		&\leq
		\|\zeta(t)\|_{L^2(\T)}
		\|\psit(t)\|_{L^2(\T)}
		\\
		&\leq
		\sqrt{C_\zeta^\sharp\Etot(t)}
		\sqrt{\frac{6L}{\pi}\Etot(t)}
		\\
		&=
		\sqrt{\frac{6LC_\zeta^\sharp}{\pi}}\,
		\Etot(t).
	\end{align*}
	Therefore
	\begin{align*}
		\left|
		\left[
		\int_{\T}
		\zeta(t,x)\psit(t,x)\,\dd x
		\right]_{t=0}^{t=T}
		\right|
		&\leq
		\sqrt{\frac{6LC_\zeta^\sharp}{\pi}}
		\bigl(\Etot(T)+\Etot(0)\bigr).
	\end{align*}
	By the monotonicity of the energy,
	\[
	\Etot(T)\leq\Etot(0),
	\]
	and hence
	\begin{align*}
		\left|
		\left[
		\int_{\T}
		\zeta\psit\,\dd x
		\right]_0^T
		\right|
		&\leq
		2\sqrt{\frac{6LC_\zeta^\sharp}{\pi}}\,
		\Etot(0)
		\\
		&=
		2\sqrt{
			2C_\zeta^\sharp\frac{3L}{\pi}
		}\,
		\Etot(0)
		\\
		&=
		C_{\mathrm{end}}\Etot(0).
	\end{align*}
	This proves \eqref{eq:endpoint-estimate}.
	
	Finally, we estimate the bulk term.  Set
	\[\mathcal{G}=\iint_{\Omega(t)}
	\varrho_x(t,x)
	\phi_x(t,x,y)\phi_y(t,x,y)
	\,\dd y\dd x.\]
	At each \(t\in[0,T]\),
	\begin{align*}
		\left|
		\mathcal{G}
		\right|
		&=
		\left|
		\int_{-L}^{L}
		\int_{-\infty}^{\eta(t,x)}
		\varrho_x(t,x)
		\phi_x(t,x,y)\phi_y(t,x,y)
		\,\dd y\dd x
		\right|
		\\
		&\quad\leq
		\|\varrho_x(t)\|_{L^\infty([-L,L])}
		\int_{-L}^{L}
		\int_{-\infty}^{\eta(t,x)}
		|\phi_x(t,x,y)\phi_y(t,x,y)|
		\,\dd y\dd x
		\\
		&\quad\leq
		\frac12
		\|\varrho_x(t)\|_{L^\infty([-L,L])}
		\int_{-L}^{L}
		\int_{-\infty}^{\eta(t,x)}
		\left(
		|\phi_x(t,x,y)|^2
		+
		|\phi_y(t,x,y)|^2
		\right)
		\,\dd y\dd x.
	\end{align*}
	By Lemma \ref{lem:kinetic-boundary},
	\[
	\frac12
	\iint_{\Omega(t)}
	|\nabla\phi(t,x,y)|^2\,\dd y\dd x
	=
	\frac12
	\int_{\T}
	\psi(t,x)G(\eta)\psi(t,x)\,\dd x
	\leq
	\Etot(t).
	\]
	It follows that
	\[
	\left|
	\iint_{\Omega(t)}
	\varrho_x\phi_x\phi_y\,\dd y\dd x
	\right|
	\leq
	\|\varrho_x(t)\|_{L^\infty([-L,L])}
	\Etot(t).
	\]
	Integrating over \(t\in[0,T]\) gives
	\begin{align*}
		\left|
		\int_0^T
		\iint_{\Omega(t)}
		\varrho_x\phi_x\phi_y\,\dd y\dd x\dd t
		\right|
		&\leq
		\int_0^T
		\|\varrho_x(t)\|_{L^\infty([-L,L])}
		\Etot(t)\,\dd t
		\\
		&\leq
		\|\varrho_x\|_{L^\infty([0,T]\times[-L,L])}
		\int_0^T\Etot(t)\,\dd t,
	\end{align*}
	which is \eqref{eq:bulk-estimate}.
\end{proof}

\section{Stabilization theorem}
\label{sec:main-theorem}

In this section, we combine the flexural coercivity estimate
\eqref{eq:flex-coercivity} with the remainder estimates
\eqref{eq:remainder-estimate}, \eqref{eq:endpoint-estimate}, and
\eqref{eq:bulk-estimate} to prove the main stabilization theorem and its
consequences. 

We first reduce the geometric assumptions to the global slope
\(\|\eta_x\|_{L^\infty}\) and the localized curvature quantity
\(\|\chi\eta_{xx}\|_{L^\infty}\).
\begin{lemma}
\label{lem:localized-geometric-reduction}
Let \((\eta,\psi,P_{\mathrm{ext}})\) be a solution of
\eqref{eq:hydroelastic-system} on \([0,T]\), and fix \(t\in[0,T]\).
Then
\begin{equation}\label{eq:zero-mean-Linfty}
  \|\eta\|_{L^\infty(\T)}
  \leq L\|\eta_x\|_{L^\infty(\T)}.
\end{equation}
Moreover, with \(\varrho\) defined in \eqref{eq:rho},
\begin{align}
  \|\varrho_x\|_{L^\infty}
  &\leq
  \left(
    \left\|\frac94-\frac12m_x\right\|_{L^\infty}
    +\frac32L\|m_{xx}\|_{L^\infty}
  \right)
  \|\eta_x\|_{L^\infty}
  +L\|\chi\eta_{xx}\|_{L^\infty}.
  \label{eq:localized-rho-bound}
\end{align}
\end{lemma}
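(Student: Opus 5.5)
The first bound uses only the zero-mean normalization, and the second is a direct differentiation of \eqref{eq:rho} followed by a term-by-term estimate.

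For \eqref{eq:zero-mean-Linfty}, I would invoke Lemma~\ref{lem:zero-mode}, which gives \(\int_{\T}\eta(t,x)\,\dd x=0\) for every \(t\in[0,T]\). Since \(\eta(t,\cdot)\) is continuous and periodic with zero mean, it vanishes at some point \(x_0\in\T\). Any \(x\in\T\) can be joined to \(x_0\) by an arc of the circle \(\T=\R/(2L\mathbb Z)\) of length at most \(L\). The fundamental theorem of calculus along that arc then gives
\[
|\eta(x)|=\left|\int_{x_0}^{x}\eta_x\,\dd s\right|\le L\|\eta_x\|_{L^\infty(\T)}.
\]

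For \eqref{eq:localized-rho-bound}, I would work on the closed cell \([-L,L]\), where \(\varrho=(m-x)\eta_x+\left(\frac{13}{4}-\frac32m_x\right)\eta\) is smooth. Differentiating gives
\[
\varrho_x=(m_x-1)\eta_x+(m-x)\eta_{xx}+\left(\tfrac{13}{4}-\tfrac32m_x\right)\eta_x-\tfrac32m_{xx}\eta .
\]
The two \(\eta_x\) coefficients combine to \(\frac94-\frac12m_x\). I would then bound the three resulting terms separately:
\begin{itemize}
\item The term \(\left(\frac94-\frac12m_x\right)\eta_x\) is bounded by \(\left\|\frac94-\frac12m_x\right\|_{L^\infty}\|\eta_x\|_{L^\infty}\).
\item The term \((m-x)\eta_{xx}\) is the only place where \(\eta_{xx}\) enters. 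By \eqref{eq:x-minus-m} of Lemma~\ref{lem:cutoff}, \(|x-m(x)|\le L\chi(x)\), so \(|(m-x)\eta_{xx}|\le L|\chi\eta_{xx}|\le L\|\chi\eta_{xx}\|_{L^\infty}\).
\item The term \(-\frac32m_{xx}\eta\) is bounded by \(\frac32\|m_{xx}\|_{L^\infty}\|\eta\|_{L^\infty}\). Applying \eqref{eq:zero-mean-Linfty} turns this into \(\frac32L\|m_{xx}\|_{L^\infty}\|\eta_x\|_{L^\infty}\).
\end{itemize}
Adding the three bounds yields \eqref{eq:localized-rho-bound}.

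There is no real obstacle here. The only points needing care are the two places where structure is essential. First, the zero mean, guaranteed by Lemma~\ref{lem:zero-mode}, is what lets the undifferentiated \(\eta\) be traded for the slope. Second, the pointwise bound \(|x-m|\le L\chi\) is what lets the second derivative appear only through the localized quantity \(\chi\eta_{xx}\). Without it, the bound would need a global \(\|\eta_{xx}\|_{L^\infty}\), which the Hamiltonian does not control.
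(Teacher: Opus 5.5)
Your proposal is correct and follows the paper's proof essentially step for step. You use the zero mean from Lemma~\ref{lem:zero-mode} to produce a zero of \(\eta\) and the arc-length bound of at most \(L\). You then differentiate \eqref{eq:rho}, combine the \(\eta_x\) coefficients into \(\frac94-\frac12m_x\), and apply \(|x-m|\le L\chi\) together with \eqref{eq:zero-mean-Linfty} to obtain \eqref{eq:localized-rho-bound}, exactly as the paper does.
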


\begin{proof}
Lemma~\ref{lem:zero-mode} gives
\[
  \int_{\T}\eta(t,x)\,\dd x=0.
\]
If \(\eta\equiv0\), the first assertion is immediate.  Otherwise, continuity
implies that \(\eta\) takes both nonnegative and nonpositive values.  Hence
there is \(x_0\in\T\) with \(\eta(x_0)=0\).  Every point of the circle
\(\T=\R/(2L\mathbb Z)\) can be connected to \(x_0\) by an arc of length at
most \(L\).  Therefore
\[
  |\eta(x)|
  =\left|\int_{x_0}^{x}\eta_x(s)\,\dd s\right|
  \leq L\|\eta_x\|_{L^\infty},
\]
which proves \eqref{eq:zero-mean-Linfty}.

Differentiating
\[
  \varrho
  =(m-x)\eta_x
  +\left(\frac{13}{4}-\frac32m_x\right)\eta
\]
gives
\begin{equation*}
  \varrho_x
  =\left(\frac94-\frac12m_x\right)\eta_x
   +(m-x)\eta_{xx}
   -\frac32m_{xx}\eta.
\end{equation*}
By Lemma~\ref{lem:cutoff}, \(|m-x|\leq L\chi\).  Combining this pointwise
bound with \eqref{eq:zero-mean-Linfty} yields
\eqref{eq:localized-rho-bound}.
\end{proof}
We now prove Theorem~\ref{thm:stabilization}.  For use in the coercivity
estimates, set
\[
  M_1=\|m_x\|_{L^\infty(\T)},
  \qquad
  M_2=\|m_{xx}\|_{L^\infty(\T)},
  \qquad
  M_3=\|m_{xxx}\|_{L^\infty(\T)},
  \qquad
  K_1=\frac{27}{8}+\frac54M_1.
\]

\begin{proof}[Proof of Theorem~\ref{thm:stabilization}]
By Lemma~\ref{lem:zero-mode},
\begin{equation*}
  \int_{\T}\eta(t,x)\,\dd x=0
  \qquad(0\leq t\leq T).
\end{equation*}

All coercivity parameters are fixed before estimating the multiplier
identity.  Define
\begin{equation*}
  q_*:=\frac{25\beta M_3^2}{9g},
  \qquad
  \theta_*:=\frac{1+q_*}{2}.
\end{equation*}
By \eqref{eq:M3-positive} and \eqref{eq:strict-parameter-gap},
\(0<q_*<1\), hence
\begin{equation}\label{eq:theta-range}
  q_*<\theta_*<1.
\end{equation}
For \(0\leq\delta\leq1\), set
\begin{align*}
  R_*(\delta)
  &:={}
  \frac{\frac94-K_1\delta^2}{1+\delta^2}
  -\frac{15LM_2}{4}\delta^2,\\
  K_Y(\delta)
  &:={}
  M_3\left(
    \frac12(1+\delta^2)^{5/4}+\frac34
  \right).
\end{align*}
Whenever \(R_*(\delta)>0\), define
\begin{equation*}
  c_{\mathrm{fl}}(\delta)
  :=(1-\theta_*)R_*(\delta),
  \qquad
  C_{\mathrm{fl}}(\delta)
  :=\frac{K_Y(\delta)^2}{\theta_*R_*(\delta)}.
\end{equation*}

At \(\delta=0\),
\begin{equation*}
  R_*(0)=\frac94,
  \qquad
  c_{\mathrm{fl}}(0)=\frac94(1-\theta_*),
  \qquad
  C_{\mathrm{fl}}(0)=\frac{25M_3^2}{36\theta_*}.
\end{equation*}
In view of \eqref{eq:theta-range},
\begin{equation*}
  \frac12-\frac{2\beta C_{\mathrm{fl}}(0)}{g}
  =\frac12-\frac{25\beta M_3^2}{18g\theta_*}
  =\frac12\left(1-\frac{q_*}{\theta_*}\right)>0.
\end{equation*}
Define the positive flat coercivity margin
\begin{equation*}
  c_E^0
  :=\min\left\{
    \frac12,
    \frac12\left(1-\frac{q_*}{\theta_*}\right),
    \frac92(1-\theta_*)
  \right\}>0.
\end{equation*}
Whenever \(R_*(\delta)>0\), set
\begin{equation}\label{eq:cE-delta}
  c_E(\delta)
  :=\min\left\{
    \frac12,
    \frac12-\frac{2\beta C_{\mathrm{fl}}(\delta)}{g},
    2c_{\mathrm{fl}}(\delta)
  \right\}.
\end{equation}
Since \(R_*(0)=9/4>0\), the functions
\(R_*(\delta)\), \(c_{\mathrm{fl}}(\delta)\), and
\(C_{\mathrm{fl}}(\delta)\) are continuous for \(\delta\) in a
neighborhood of \(0\).  Hence \(c_E(\delta)\), being the minimum of three
continuous functions there, is also continuous at \(0\), and
\(c_E(0)=c_E^0\).  Therefore there exists
\(\delta_0\in(0,1]\) such that for every
\(0\leq\delta\leq\delta_0\),
\begin{equation*}
  \frac94-K_1\delta^2\geq0,
  \qquad
  R_*(\delta)>0,
  \qquad
  c_E(\delta)\geq\frac12c_E^0.
\end{equation*}
In particular, the middle quantity in \eqref{eq:cE-delta} is positive for
\(0\leq\delta\leq\delta_0\).

Set
\begin{equation*}
  C_{\varrho}^{\mathrm{loc}}
  :=\max\left\{
  \left\|\frac94-\frac12m_x\right\|_{L^\infty}
  +\frac32LM_2,\,L\right\}.
\end{equation*}
Choose
\begin{equation}\label{eq:delta-star-explicit}
  0<\delta_*
  \leq
  \min\left\{
    \delta_0,
    \frac{c_E^0}{8C_{\varrho}^{\mathrm{loc}}}
  \right\}.
\end{equation}
The denominator is strictly positive because \(L>0\), so
\(C_{\varrho}^{\mathrm{loc}}>0\).  Fix this choice of \(\delta_*\) for the
remainder of the proof.

Consider a solution satisfying \eqref{eq:localized-geometric-smallness}.
Lemma~\ref{lem:localized-geometric-reduction} gives
\begin{equation*}
  \|\eta(t)\|_{L^\infty}
  \leq L\delta_*,
  \qquad
  \|\eta_x(t)\|_{L^\infty}\leq\delta_*
  \qquad(0\leq t\leq T).
\end{equation*}
Thus Lemma~\ref{lem:flex-coercivity} applies with
\begin{equation*}
  \varepsilon_0=L\delta_*,
  \qquad
  \varepsilon_1=\delta_*.
\end{equation*}
With these values, \(R_{\mathrm{fl}}\) in \eqref{eq:Rfl} equals
\(R_*(\delta_*)\).  By \eqref{eq:M3-positive}, \(M_3>0\), so we choose
\(d_1,d_2\) by Proposition~\ref{prop:flex-parameter-feasibility} with
\(\theta=\theta_*\).  We then obtain
\begin{equation}\label{eq:flex-coercivity-main-constants}
  \int_{\T}\Bflex(\eta)\zeta\,\dd x
  \geq
  c_{\mathrm{fl}}(\delta_*)
  \int_{\T}\frac{\eta_{xx}^2}{(1+\eta_x^2)^{5/2}}\,\dd x
  -C_{\mathrm{fl}}(\delta_*)\int_{\T}\eta^2\,\dd x.
\end{equation}

The bulk coefficient is controlled by the localized curvature hypothesis.
Indeed, if
\[
  A_\varrho:=
  \left\|\frac94-\frac12m_x\right\|_{L^\infty}+\frac32LM_2,
\]
then \eqref{eq:localized-rho-bound} and
\eqref{eq:localized-geometric-smallness} give
\[
  \|\varrho_x\|_\infty
  \leq A_\varrho\|\eta_x\|_\infty
      +L\|\chi\eta_{xx}\|_\infty
  \leq C_{\varrho}^{\mathrm{loc}}
      \bigl(\|\eta_x\|_\infty+\|\chi\eta_{xx}\|_\infty\bigr).
\]
Therefore \eqref{eq:delta-star-explicit} yields
\begin{align}
  \|\varrho_x\|_{L^\infty([0,T]\times[-L,L])}
  &\leq C_{\varrho}^{\mathrm{loc}}\delta_*
  \leq\frac18c_E^0
  \leq\frac14c_E(\delta_*).
  \label{eq:rho-absorption-main}
\end{align}
For the last inequality we used
\(c_E(\delta_*)\geq c_E^0/2\).

Put
\[
  X(t)=\int_{\T}\psi G(\eta)\psi\,\dd x,
  \quad
  Y(t)=\int_{\T}\eta^2\,\dd x,
  \quad
  Z(t)=\int_{\T}
  \frac{\eta_{xx}^2}{(1+\eta_x^2)^{5/2}}\,\dd x.
\]
Then
\begin{equation}\label{eq:energy-XYZ}
  \Etot(t)=\frac12X(t)+\frac g2Y(t)+\frac\beta2Z(t).
\end{equation}
Since
\(\mathcal H_g=\frac12X+\frac g2Y\),
\eqref{eq:flex-coercivity-main-constants} gives
\begin{align}
  \frac12\mathcal H_g(t)
  +\beta\int_{\T}\Bflex(\eta)\zeta\,\dd x
  &\geq
  \frac14X(t)
  +\left(\frac g4-\beta C_{\mathrm{fl}}(\delta_*)\right)Y(t)
  +\beta c_{\mathrm{fl}}(\delta_*)Z(t)
  \notag\\
  &\geq c_E(\delta_*)\Etot(t).
  \label{eq:coercive-LHS-energy}
\end{align}
By the definition of \(c_E(\delta_*)\) in \eqref{eq:cE-delta},
\[
  \frac14
  \geq
  \frac{c_E(\delta_*)}{2},
  \qquad
  \frac g4-\beta C_{\mathrm{fl}}(\delta_*)
  \geq
  \frac{g\,c_E(\delta_*)}{2},
  \qquad
  \beta c_{\mathrm{fl}}(\delta_*)
  \geq
  \frac{\beta\,c_E(\delta_*)}{2}.
\]
Comparing these bounds with the decomposition
\eqref{eq:energy-XYZ} gives the second inequality in
\eqref{eq:coercive-LHS-energy}.  In addition, the remaining terms on the
left-hand side of \eqref{eq:hydro-multiplier},
\[
  2g\int_{\T}\chi\eta^2\,\dd x
  \qquad\text{and}\qquad
  \Sigma_\infty,
\]
are nonnegative and may therefore be discarded in the lower bound.

For brevity set
\[
  c_E:=c_E(\delta_*),
  \qquad
  E_T:=\int_0^T\Etot(t)\,\dd t.
\]
Because \(\delta_*\leq1\), Lemma~\ref{lem:remainder} applies.  Taking
absolute values on the right-hand side of
\eqref{eq:hydro-multiplier} and using
\eqref{eq:coercive-LHS-energy},
\eqref{eq:rho-absorption-main}, and Lemma~\ref{lem:remainder}, we obtain for
every \(a\in(0,1]\)
\begin{align}
  c_EE_T
  &\leq
  \left(\frac{A_{-1}}a+aA_{+1}+C_{\mathrm{end}}\right)\Etot(0)
  +\left(aA_E+\frac{c_E}{4}\right)E_T.
  \label{eq:pre-absorption}
\end{align}
Choose
\begin{equation*}
  a_*:=\min\left\{1,\frac{c_E}{4A_E}\right\}.
\end{equation*}
Since \(A_E>0\), this is a well-defined number in \((0,1]\), and
\(a_*A_E\leq c_E/4\).  Hence \eqref{eq:pre-absorption} gives
\[
  \frac{c_E}{2}E_T
  \leq
  \left(
    \frac{A_{-1}}{a_*}+a_*A_{+1}+C_{\mathrm{end}}
  \right)\Etot(0).
\]
Therefore \eqref{eq:integrated-energy-estimate} holds with
\begin{equation}\label{eq:observability-constant}
  C
  :=\frac2{c_E}
  \left(
    \frac{A_{-1}}{a_*}+a_*A_{+1}+C_{\mathrm{end}}
  \right).
\end{equation}
All quantities on the right depend only on
\(g,\beta,L,\lambda\) and the fixed multiplier \(m\).  In particular they
are independent of \(T\).

Finally, Proposition~\ref{prop:dissipation} implies that \(\Etot\) is
nonincreasing.  Thus
\[
  T\Etot(T)
  \leq\int_0^T\Etot(t)\,\dd t
  \leq C\Etot(0),
\]
which proves \eqref{eq:one-step-decay}.
\end{proof}
We next prove the localized dissipation-observability estimate stated in
Corollary~\ref{cor:dissipation-observability}.

\begin{proof}[Proof of Corollary~\ref{cor:dissipation-observability}]
Integrating the exact dissipation identity
\eqref{eq:energy-dissipation} on \([0,T]\) gives
\begin{equation}\label{eq:exact-dissipation-on-I}
  \Etot(0)-\Etot(T)
  =\lambda
  \int_0^T\int_{\T}
  \chi|\eta_t|^2\,\dd x\dd t.
\end{equation}
By \eqref{eq:one-step-decay},
\[
  \Etot(T)\leq\frac CT\Etot(0).
\]
Substitution in \eqref{eq:exact-dissipation-on-I} yields
\[
  \lambda\int_0^T\int_{\T}\chi|\eta_t|^2
  \geq\left(1-\frac CT\right)\Etot(0).
\]
Since \(T>C\), the coefficient on the right is positive, proving
\eqref{eq:dissipation-observability}.  If \(T\geq2C\), then
\(1-C/T\geq1/2\), which gives
\eqref{eq:dissipation-observability-2C}.
\end{proof}
The stabilization estimate remains valid on every time interval lying
strictly before the localized geometric condition reaches the threshold
\(\delta_*\).

\begin{corollary}
	\label{cor:geometric-exit-time}
	Let \((\eta,\psi,P_{\mathrm{ext}})\) be a solution of
	\eqref{eq:hydroelastic-system} on \([0,T_*)\), with
	\(P_{\mathrm{ext}}\) given by \eqref{eq:feedback}.  Define
	\begin{equation*}
		\mathcal G(t)
		:=
		\|\eta_x(t)\|_{L^\infty(\T)}
		+\|\chi\eta_{xx}(t)\|_{L^\infty(\T)}.
	\end{equation*}
	Assume that
	\[
	\mathcal G(0)<\delta_*,
	\]
	and set
	\begin{equation}\label{eq:Tgeo}
		T_{\mathrm{geo}}
		:=
		\sup\left\{
		\tau\in(0,T_*):
		\sup_{0\leq t\leq\tau}\mathcal G(t)<\delta_*
		\right\}.
	\end{equation}
	Then \(T_{\mathrm{geo}}>0\).  Moreover, for every
	\(0<T<T_{\mathrm{geo}}\),
	\begin{equation}\label{eq:exit-time-integrated}
		\int_0^T\Etot(t)\,\dd t
		\leq C\Etot(0),
		\qquad
		\Etot(T)
		\leq\frac{C}{T}\Etot(0).
	\end{equation}
	If \(T_{\mathrm{geo}}>C\), then
	\eqref{eq:dissipation-observability} also holds on every interval
	\([0,T]\) with
	\[
	C<T<T_{\mathrm{geo}}.
	\]
\end{corollary}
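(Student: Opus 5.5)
The plan is to derive Corollary~\ref{cor:geometric-exit-time} directly from Theorem~\ref{thm:stabilization} and Corollary~\ref{cor:dissipation-observability}, applied on the subintervals \([0,T]\) with \(T<T_{\mathrm{geo}}\). Beyond that, the only new input is that \(\mathcal G\) is continuous in time.

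\textbf{Step 1: continuity of \(\mathcal G\).} We first show that \(\mathcal G\) is continuous on \([0,T_*)\). By \eqref{eq:theorem-regularity} with \(s\ge5\), on each compact subinterval
\[
\eta\in C\bigl([0,T];H^{s+3/2}(\T)\bigr).
\]
Since \(s+3/2>5/2\), the Sobolev embedding \(H^{s+3/2}(\T)\hookrightarrow C^2(\T)\) shows that \(t\mapsto\eta_x(t)\) and \(t\mapsto\chi\eta_{xx}(t)\) are continuous into \(C(\T)\), the weight \(\chi\) being smooth and bounded. Taking \(L^\infty\) norms, which is a Lipschitz operation, gives continuity of \(\mathcal G\).

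\textbf{Step 2: \(T_{\mathrm{geo}}>0\).} Because \(\mathcal G(0)<\delta_*\), continuity gives some \(\tau_0>0\) with \(\mathcal G<\delta_*\) on \([0,\tau_0]\). Hence the set in \eqref{eq:Tgeo} is nonempty and \(T_{\mathrm{geo}}\ge\tau_0>0\).

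\textbf{Step 3: the estimates on \([0,T]\).} Fix \(0<T<T_{\mathrm{geo}}\). By definition of the supremum there is \(\tau\in(T,T_{\mathrm{geo}}]\) in the set, so
\[
\sup_{[0,T]}\mathcal G\le\sup_{[0,\tau]}\mathcal G<\delta_*.
\]
Thus \eqref{eq:localized-geometric-smallness} holds on \([0,T]\). The restriction of the solution to \([0,T]\) satisfies the regularity hypotheses, since \([0,T]\subset[0,T_*)\) is compact. Theorem~\ref{thm:stabilization} then yields \eqref{eq:exit-time-integrated}, with the same \(C\) and \(\delta_*\), because these constants do not depend on the solution or on \(T\). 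If moreover \(C<T<T_{\mathrm{geo}}\), Corollary~\ref{cor:dissipation-observability} applies verbatim on \([0,T]\) and gives \eqref{eq:dissipation-observability}.

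The only delicate point is Step 1: the regularity hypothesis must give time continuity of \(\eta_{xx}\) in \(L^\infty\), not merely in \(L^2\). It does, since \(s+3/2\) exceeds \(5/2\) by a wide margin. A subtle issue would arise only if \(T_{\mathrm{geo}}\) were defined with a closed inequality, and the strict inequality in \eqref{eq:Tgeo} avoids it. Everything else is bookkeeping.
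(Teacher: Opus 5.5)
Your proposal is correct and follows the paper's proof essentially step by step. You obtain continuity of \(\mathcal G\) from the \(H^{s+3/2}\) regularity via Sobolev embedding, deduce \(T_{\mathrm{geo}}>0\) from \(\mathcal G(0)<\delta_*\), and then apply Theorem~\ref{thm:stabilization} and Corollary~\ref{cor:dissipation-observability} on \([0,T]\) using some \(\tau>T\) from the defining set. The only cosmetic difference is that you compose \(t\mapsto\eta(t)\in H^{s+3/2}\) with the bounded embedding into \(C^2(\T)\), whereas the paper argues through uniform continuity of \(\eta_x,\eta_{xx}\) on \([0,T_0]\times\T\). In Step 2, note that \(\sup_{[0,\tau_0]}\mathcal G<\delta_*\) holds because a continuous function attains its supremum on a compact interval.
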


\begin{proof}
	Fix \(T_0<T_*\).  By the regularity assumptions of
	Theorem~\ref{thm:stabilization}, for some \(s\geq5\) we have
	\(\eta\in C([0,T_0];H^{s+3/2}(\T))\).  Sobolev embedding therefore
	implies that
	\[
	(t,x)\longmapsto \eta_x(t,x)
	\qquad\text{and}\qquad
	(t,x)\longmapsto \eta_{xx}(t,x)
	\]
	are continuous on the compact set \([0,T_0]\times\T\).  Hence they are
	uniformly continuous there.  It follows that, whenever \(t\to s\) in
	\([0,T_0]\),
	\[
	\|\eta_x(t)-\eta_x(s)\|_{L^\infty(\T)}
	\longrightarrow0,
	\qquad
	\|\eta_{xx}(t)-\eta_{xx}(s)\|_{L^\infty(\T)}
	\longrightarrow0.
	\]
	Therefore
	\begin{align*}
		|\mathcal G(t)-\mathcal G(s)|
		\leq
		\|\eta_x(t)-\eta_x(s)\|_{L^\infty(\T)}
		+\|\chi\|_{L^\infty(\T)}
		\|\eta_{xx}(t)-\eta_{xx}(s)\|_{L^\infty(\T)}
		\longrightarrow0.
	\end{align*}
	Thus \(t\mapsto\mathcal G(t)\) is continuous on every compact subinterval
	of \([0,T_*)\).
	
	Since \(\mathcal G(0)<\delta_*\), continuity at \(t=0\) yields
	\(\tau_0\in(0,T_*)\) such that
	\[
	\mathcal G(t)<\delta_*
	\qquad
	\text{for all }0\leq t\leq\tau_0.
	\]
	Consequently,
	\[
	\sup_{0\leq t\leq\tau_0}\mathcal G(t)<\delta_*,
	\]
	after decreasing \(\tau_0\) if necessary.  Hence the set in
	\eqref{eq:Tgeo} is nonempty and
	\[
	T_{\mathrm{geo}}\geq\tau_0>0.
	\]
	
	Now let \(0<T<T_{\mathrm{geo}}\).  Since \(T_{\mathrm{geo}}\) is the
	supremum of the set in \eqref{eq:Tgeo}, there exists
	\(\tau\in(T,T_*)\) such that
	\[
	\sup_{0\leq t\leq\tau}\mathcal G(t)<\delta_*.
	\]
	Because \(T<\tau\),
	\[
	\sup_{0\leq t\leq T}\mathcal G(t)
	\leq
	\sup_{0\leq t\leq\tau}\mathcal G(t)
	<\delta_*.
	\]
	Thus the geometric hypothesis of
	Theorem~\ref{thm:stabilization} holds on \([0,T]\), and
	\eqref{eq:exit-time-integrated} follows.
	
	Finally, if \(T_{\mathrm{geo}}>C\) and
	\(C<T<T_{\mathrm{geo}}\), the same argument shows that the hypotheses of
	Theorem~\ref{thm:stabilization} hold on \([0,T]\).  Since \(T>C\),
	Corollary~\ref{cor:dissipation-observability} then gives
	\eqref{eq:dissipation-observability}.
\end{proof}

It remains to justify the iteration leading to the exponential estimate in
Corollary~\ref{cor:optimized-exponential}.

\begin{proof}[Proof of Corollary~\ref{cor:optimized-exponential}]
Fix \(\tau>C\) and put \(q=C/\tau\in(0,1)\).  For each integer
\(n\geq0\), apply Theorem~\ref{thm:stabilization} on \([0,\tau]\) to the
time-shifted solution
\[
  (\eta_n,\psi_n)(s)
  :=(\eta,\psi)(n\tau+s),
  \qquad 0\leq s\leq\tau.
\]
The global hypothesis \eqref{eq:global-localized-geometric-smallness} is
preserved under this time shift.  Hence, for every integer \(n\geq0\),
\[
\Etot((n+1)\tau)
\leq q\,\Etot(n\tau).
\]
Iterating the one-step inequality
\(\Etot((n+1)\tau)\leq q\,\Etot(n\tau)\) gives
\[
\Etot(n\tau)
\leq q^n\Etot(0),
\qquad n\geq0.
\]
For arbitrary \(t\geq0\), set
\[
n=\left\lfloor\frac{t}{\tau}\right\rfloor.
\]
Then
\[
n
\geq
\frac{t}{\tau}-1.
\]
Since \(0<q<1\), the function \(s\mapsto q^s\) is decreasing.  Therefore
\begin{align*}
	\Etot(t)
	&\leq q^n\Etot(0)\\
	&\leq q^{\,t/\tau-1}\Etot(0)\\
	&=\frac{\tau}{C}
	\exp\!\left(
	-\frac{\log(\tau/C)}{\tau}\,t
	\right)\Etot(0),
\end{align*}
which proves \eqref{eq:general-exponential-family}.  Writing
\(r=\tau/C>1\), the rate equals
\[
  \frac1C\frac{\log r}{r}.
\]
Since
\[
  \frac{\mathrm d}{\mathrm dr}\frac{\log r}{r}
  =\frac{1-\log r}{r^2},
\]
it is uniquely maximized at \(r=\mathrm{e}\), i.e. \(\tau=\mathrm{e}C\).  Substitution gives
\eqref{eq:continuous-exponential-decay}.
\end{proof}

The explicit constant \(C\) in \eqref{eq:observability-constant} also allows us to examine its dependence
on the feedback gain \(\lambda\).

\begin{remark}
	\label{rem:lambda-optimization}
	Fix the multiplier \(m\) and the smallness threshold used in the proof of
	Theorem~\ref{thm:stabilization}.  Then \(c_E\), \(A_E\), \(a_*\), and
	\(C_{\mathrm{end}}\) are independent of \(\lambda\).  The dependence on the
	feedback gain enters \eqref{eq:observability-constant} only through
	\(A_{-1}\) and \(A_{+1}\).  Substituting
	\eqref{eq:A-minus} and \eqref{eq:A-plus} into
	\eqref{eq:observability-constant}, we obtain
	\begin{align*}
		C(\lambda)
		&=
		\frac2{c_E}
		\left[
		\frac1{a_*}
		\left(
		\frac{\lambda}{2}\|\chi\|_{L^\infty}
		+\frac{75L}{8\pi\lambda}\|\chi\|_{L^\infty}
		+\frac{L^2}{4\lambda}
		\right)
		+\frac{8a_*}{\lambda}
		+C_{\mathrm{end}}
		\right]\\
		&=
		\frac2{c_E}
		\left(
		P\lambda+\frac{Q}{\lambda}+C_{\mathrm{end}}
		\right),
	\end{align*}
	where
	\begin{equation*}
		P
		:=
		\frac{\|\chi\|_{L^\infty}}{2a_*},
		\qquad
		Q
		:=
		\frac1{a_*}
		\left(
		\frac{75L}{8\pi}\|\chi\|_{L^\infty}
		+\frac{L^2}{4}
		\right)
		+8a_*.
	\end{equation*}
	Since \(P,Q>0\), the function
	\[
	\lambda\longmapsto P\lambda+\frac{Q}{\lambda},
	\qquad \lambda>0,
	\]
	has a unique minimum.  Indeed,
	\[
	\frac{\dd}{\dd\lambda}
	\left(
	P\lambda+\frac{Q}{\lambda}
	\right)
	=
	P-\frac{Q}{\lambda^2},
	\]
	so the minimizing value is
	\begin{equation*}
		\lambda_{\mathrm{opt}}
		=
		\sqrt{\frac{Q}{P}}.
	\end{equation*}
	At this value,
	\[
	P\lambda_{\mathrm{opt}}
	=
	\frac{Q}{\lambda_{\mathrm{opt}}}
	=
	\sqrt{PQ},
	\]
	and therefore
	\begin{equation*}
		C(\lambda_{\mathrm{opt}})
		=
		\frac2{c_E}
		\left(
		2\sqrt{PQ}+C_{\mathrm{end}}
		\right).
	\end{equation*}
	
Thus, within the quantitative estimate produced by the present multiplier
argument, the choice \(\lambda_{\mathrm{opt}}\) balances the contributions
proportional to \(\lambda\) and \(\lambda^{-1}\).  This optimization refers
only to the explicit constant obtained from the present multiplier estimate.
\end{remark}

\section{Linearized  stabilization}
\label{sec:linear-check}
We conclude by considering the linearization about the flat rest state.  In
addition to the dispersion relation and the exact dissipation identities, we
show that the same localized pressure law yields a uniform stabilization
estimate for the linearized dynamics.  This provides the linear counterpart
of Theorem~\ref{thm:stabilization} within the same multiplier framework.

At the rest state \(\eta=0\), the finite-energy harmonic extension of a
nonzero Fourier mode \(\psi_k\mathrm{e}^{ikx}\), \(k=\pi n/L\), is
\[
  \phi_k(y)=\mathrm{e}^{|k|y}\psi_k,
\]
so
\begin{equation*}
  G(0)=|D|,
\end{equation*}
and
\begin{equation*}
  \Bflex'(0)\eta=\partial_x^4\eta.
\end{equation*}
Without feedback, the linearized system is
\[
  \eta_t=|D|\psi,
  \qquad
  \psi_t+g\eta+\beta\partial_x^4\eta=0,
\]
which agrees with the infinite-depth linearized Cosserat system considered in
\cite{WanYangControl}.  Hence
\[
  \eta_{tt}+|D|(g+\beta\partial_x^4)\eta=0.
\]
For \(\mathrm{e}^{i(kx-\omega t)}\),
\begin{equation*}
  \omega(k)^2=(g+\beta k^4)|k|.
\end{equation*}
Thus \(\omega(k)\sim\sqrt g\,|k|^{1/2}\) at low frequency and
\(\omega(k)\sim\sqrt\beta\,|k|^{5/2}\) at high frequency.

For the linearized system with the localized pressure feedback
\(P_{\mathrm{ext}}=\lambda\chi\eta_t\), the corresponding energy satisfies
the following exact dissipation identities.

\begin{proposition}[Linear dissipation identities]
\label{prop:linear-high-order-dissipation}
Consider the linearized system
\begin{equation}\label{eq:linear-closed-loop}
  \eta_t=|D|\psi,
  \qquad
  \psi_t+g\eta+\beta\partial_x^4\eta
  +\lambda\chi\eta_t=0.
\end{equation}
For every integer \(j\geq0\), define
\begin{equation}\label{eq:linear-time-energy-j}
  \mathcal H_j(t)
  :=\frac12\int_{\T}
  \left(
    \psi^{(j)}|D|\psi^{(j)}
    +g|\eta^{(j)}|^2
    +\beta|\partial_x^2\eta^{(j)}|^2
  \right)\dd x,
\end{equation}
where \(\eta^{(j)}=\partial_t^j\eta\) and
\(\psi^{(j)}=\partial_t^j\psi\).  Then
\begin{equation}\label{eq:linear-time-energy-identity}
  \frac{\dd}{\dd t}\mathcal H_j(t)
  =-\lambda\int_{\T}\chi(x)
  |\partial_t^{j+1}\eta(t,x)|^2\,\dd x\leq0.
\end{equation}
\end{proposition}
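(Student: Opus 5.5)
The plan is to exploit that \eqref{eq:linear-closed-loop} has time-independent coefficients. Differentiating it $j$ times in $t$ shows that $(\eta^{(j)},\psi^{(j)})$ solves the same closed-loop system:
\[
  \eta^{(j)}_t=|D|\psi^{(j)},
  \qquad
  \psi^{(j)}_t+g\eta^{(j)}+\beta\partial_x^4\eta^{(j)}+\lambda\chi\eta^{(j)}_t=0.
\]
We need enough time regularity for this. We assume the solution is smooth enough that $\partial_t^{j+1}\eta$ and $\partial_t^{j+1}\psi$ exist in the appropriate Sobolev spaces, for instance by taking data in the domain of the $j$-th power of the generator. The identity then only has to be proved for $j=0$ and applied to the differentiated pair.

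For $j=0$, we differentiate $\mathcal H_0$ under the integral sign. Since $|D|$ is self-adjoint on $L^2(\T)$,
\[
  \frac{\dd}{\dd t}\frac12\int_{\T}\psi|D|\psi\,\dd x=\int_{\T}\psi_t|D|\psi\,\dd x=\int_{\T}\psi_t\eta_t\,\dd x .
\]
Two periodic integrations by parts give
\[
  \frac{\dd}{\dd t}\frac\beta2\int_{\T}\eta_{xx}^2\,\dd x=\beta\int_{\T}\partial_x^4\eta\,\eta_t\,\dd x,
\]
and the gravity term contributes $g\int_{\T}\eta\eta_t\,\dd x$. Adding the three pieces yields
\[
  \frac{\dd}{\dd t}\mathcal H_0=\int_{\T}\bigl(\psi_t+g\eta+\beta\partial_x^4\eta\bigr)\eta_t\,\dd x=-\lambda\int_{\T}\chi\eta_t^2\,\dd x\leq0,
\]
where the last step substitutes the dynamic equation and uses $\chi\geq0$ from Lemma~\ref{lem:cutoff}. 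This is the linear analogue of Proposition~\ref{prop:dissipation}.

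Applying this computation to $(\eta^{(j)},\psi^{(j)})$ gives \eqref{eq:linear-time-energy-identity}, because $\eta^{(j)}_t=\partial_t^{j+1}\eta$.

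The only delicate point is the regularity justification. We have to ensure that $\eta^{(j)}\in H^2$ and $\psi^{(j)}\in H^{1/2}$ are differentiable in time with values in spaces where the pairings above make sense. The cleanest route is to prove the identity first for smooth (finite Fourier mode, or domain-of-generator) data and then extend it by density using continuity of the semigroup in the energy space. The algebra itself is routine.
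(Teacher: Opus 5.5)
Your proposal is correct and follows the paper's proof. You differentiate the time-independent system $j$ times and compute $\frac{\dd}{\dd t}\mathcal H_j$ using $\eta^{(j)}_t=|D|\psi^{(j)}$ (via self-adjointness of $|D|$) together with two periodic integrations by parts on the bending term, then substitute the differentiated dynamic equation. Your regularity discussion goes beyond the paper, which simply assumes sufficiently regular solutions. Of your two suggested approximation classes, only domain-of-generator data works: finite-Fourier-mode data is not preserved by the flow, because multiplication by $\chi$ couples modes.
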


\begin{proof}
Because \(\chi\) is independent of time, applying \(\partial_t^j\) to
\eqref{eq:linear-closed-loop} gives
\[
  \eta^{(j)}_t=|D|\psi^{(j)},
  \qquad
  \psi^{(j)}_t+g\eta^{(j)}
  +\beta\partial_x^4\eta^{(j)}
  +\lambda\chi\eta^{(j)}_t=0.
\]
Differentiating \eqref{eq:linear-time-energy-j} and using
\(\eta^{(j)}_t=|D|\psi^{(j)}\), we obtain
\begin{align*}
	\frac{\dd}{\dd t}\mathcal H_j
	&=
	\int_{\T}
	\left(
	\psi^{(j)}_t\eta^{(j)}_t
	+g\eta^{(j)}\eta^{(j)}_t
	+\beta\partial_x^2\eta^{(j)}
	\partial_x^2\eta^{(j)}_t
	\right)\dd x.
\end{align*}
By periodicity,
\[
\int_{\T}
\partial_x^2\eta^{(j)}
\partial_x^2\eta^{(j)}_t\,\dd x
=
\int_{\T}
\partial_x^4\eta^{(j)}
\eta^{(j)}_t\,\dd x.
\]
Hence
\[
\frac{\dd}{\dd t}\mathcal H_j
=
\int_{\T}
\left(
\psi^{(j)}_t
+g\eta^{(j)}
+\beta\partial_x^4\eta^{(j)}
\right)
\eta^{(j)}_t\,\dd x.
\]
The differentiated dynamic equation gives
\[
\psi^{(j)}_t
+g\eta^{(j)}
+\beta\partial_x^4\eta^{(j)}
=
-\lambda\chi\eta^{(j)}_t.
\]
Therefore
\[
\frac{\dd}{\dd t}\mathcal H_j
=
-\lambda\int_{\T}
\chi|\eta^{(j)}_t|^2\,\dd x,
\]
which proves \eqref{eq:linear-time-energy-identity}.
\end{proof}

The dissipation identity by itself does not imply decay of the full energy,
since it controls only \(\eta_t\) on the support of \(\chi\).  In the flat
geometry the multiplier calculation used in the nonlinear problem simplifies:
the bulk term generated by the quadratic water-wave nonlinearity is absent,
while the fourth-order elastic contribution retains the coercive structure
identified in Lemma~\ref{lem:unique-flex-multiplier}.  This gives the following
linear stabilization result.

\begin{theorem}[Linear stabilization]
\label{thm:linear-stabilization}
Let
\[
  M_3=\|m_{xxx}\|_{L^\infty(\T)},
\]
and assume
\[
  g>\frac{25}{9}\,\beta M_3^2.
\]
Let \((\eta,\psi)\) be a sufficiently regular, real-valued, even solution of
\eqref{eq:linear-closed-loop} on \([0,T]\), with
\[
  \int_{\T}\eta(0,x)\,\dd x=0,
\]
and put \(\mathcal H_{\mathrm{lin}}=\mathcal H_0\), where
\(\mathcal H_0\) is defined in \eqref{eq:linear-time-energy-j}.  There exists
\[
  C_{\mathrm{lin}}
  =C_{\mathrm{lin}}(g,\beta,L,\lambda,m)>0
\]
such that, for every \(T>0\),
\[
  \int_0^T\mathcal H_{\mathrm{lin}}(t)\,\dd t
  \leq C_{\mathrm{lin}}\mathcal H_{\mathrm{lin}}(0),
  \qquad
  \mathcal H_{\mathrm{lin}}(T)
  \leq\frac{C_{\mathrm{lin}}}{T}\mathcal H_{\mathrm{lin}}(0).
\]
If \(T>C_{\mathrm{lin}}\), then
\[
  \mathcal H_{\mathrm{lin}}(0)
  \leq
  \frac{\lambda}{1-C_{\mathrm{lin}}/T}
  \int_0^T\int_{\T}\chi(x)|\eta_t(t,x)|^2\,\dd x\dd t.
\]
If the solution is defined for all \(t\geq0\), then
\[
  \mathcal H_{\mathrm{lin}}(t)
  \leq
  \exp\!\left(1-\frac{t}{\mathrm e C_{\mathrm{lin}}}\right)
  \mathcal H_{\mathrm{lin}}(0),
  \qquad t\geq0.
\]
\end{theorem}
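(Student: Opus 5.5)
We follow the nonlinear proof of Theorem~\ref{thm:stabilization} at $\eta=0$. At the flat state every nonlinear term disappears: $G(0)=|D|$, $N\equiv0$, $\Bflex$ is replaced by $\partial_x^4$, and no geometric smallness is needed.

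\textbf{The linear multiplier identity.} Take $\zeta=\partial_x(m\eta)+\frac52\chi\eta-\frac14\eta$ as in \eqref{eq:zeta-rho}. Then repeat Lemmas~\ref{lem:basic-multiplier}--\ref{lem:intermediate-gravity} with $F=\lambda\chi\eta_t+\beta\partial_x^4\eta$ and $N=0$; these lemmas used only the equations, periodicity and integration by parts. The Pohozaev identity of Lemma~\ref{lem:pohozaev} is applied in the fixed half-cylinder $\T\times(-\infty,0)$ with $\phi=e^{|D|y}\psi$. It gives
\[
\int_{\T}x\psi_x|D|\psi\,\dd x=\Sigma_\infty\geq0,
\]
because the term $(\eta-x\eta_x)N$ is absent. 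Adding twice the linear $\chi\eta\psi$ identity, and centering with $\psit,\Pt$ via the linear version of Lemma~\ref{lem:zero-mode}, then gives the linear analogue of \eqref{eq:hydro-multiplier} with no bulk term $\varrho_x\phi_x\phi_y$:
\begin{align*}
&\frac12\int_0^T\Big(\tfrac12X+\tfrac g2Y\Big)\dd t+\beta\int_0^T\int_{\T}\eta_{xxxx}\zeta\,\dd x\dd t+\mathcal W+\mathcal O_\infty\\
&\qquad=\int_0^T\int_{\T}\Big[\tfrac52\chi\psit+(x-m)\psi_x\Big]|D|\psi\,\dd x\dd t-\int_0^T\int_{\T}\Pt\zeta\,\dd x\dd t-\Big[\int_{\T}\zeta\psit\,\dd x\Big]_0^T .
\end{align*}
Here $X=\int_{\T}\psi|D|\psi\,\dd x$ and $Y=\int_{\T}\eta^2\,\dd x$.

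\textbf{Coercivity.} Lemma~\ref{lem:unique-flex-multiplier} with $r=5/2$ gives
\[
\int_{\T}\eta_{xxxx}\zeta\,\dd x=\frac94\|\eta_{xx}\|^2-2\int_{\T}m_{xx}\eta_x\eta_{xx}\,\dd x-\frac32\int_{\T}m_{xxx}\eta\eta_{xx}\,\dd x.
\]
This is \eqref{eq:flex-virial} at $\varepsilon_0=\varepsilon_1=0$. In the middle term, write $2\eta_x\eta_{xx}=(\eta_x^2)_x$ and integrate by parts to get $\int_{\T}m_{xxx}\eta_x^2\,\dd x$, then use $\|\eta_x\|^2\le\|\eta\|\|\eta_{xx}\|$. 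With the optimized Young splitting of Proposition~\ref{prop:flex-parameter-feasibility} and Remark~\ref{rem:flat-threshold-optimal}, choose $\theta_*\in(q_*,1)$ where $q_*=25\beta M_3^2/(9g)$. This gives
\[
\tfrac12\Big(\tfrac12X+\tfrac g2Y\Big)+\beta\int_{\T}\eta_{xxxx}\zeta\,\dd x\ge c_E\mathcal H_{\mathrm{lin}}
\]
with $c_E=c_E^0>0$, exactly as in the proof of the main theorem.

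\textbf{Right-hand side and conclusion.} Estimate the right-hand side as in Lemma~\ref{lem:remainder}:
\begin{itemize}[nosep]
\item Lemma~\ref{lem:zeta-control} with $\varepsilon_1=0$ gives $\|\zeta\|^2\le C_\zeta\mathcal H_{\mathrm{lin}}$.
\item Lemma~\ref{lem:mean-zero-trace} with $M=0$ gives $\|\psit\|^2\le\frac{2L}{\pi}X$.
\item The linear dissipation identity of Proposition~\ref{prop:linear-high-order-dissipation} with $j=0$ gives $\lambda\int_0^T\int_{\T}\chi\eta_t^2\,\dd x\dd t\le\mathcal H_{\mathrm{lin}}(0)$.
\end{itemize}
The step needing care is the localized trace bound for $\int_{\T}\chi\psi_x^2\,\dd x$. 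At $\eta=0$ we have $\psi_x=V$ and $G=B$, so $N\equiv0$ forces $V^2=B^2$ pointwise on the surface. This is false in general, so the linear analogue of the identity behind Lemma~\ref{lem:trace} has to be taken from the stress computation itself. Lemma~\ref{lem:harmonic-stress} with $f=\chi$ still holds for $\phi=e^{|D|y}\psi$, and reads
\[
\frac12\int_{\T}\chi(V^2-B^2)\,\dd x=\iint\chi_x\phi_x\phi_y\,\dd y\dd x,
\]
where $\frac12(V^2-B^2)$ is the quantity that $N$ denotes at general $\eta$. Hence
\[
\int_{\T}\chi\psi_x^2\,\dd x\le\int_{\T}\chi(|D|\psi)^2\,\dd x+\|\chi_x\|_\infty X,
\]
which is at least as good as \eqref{eq:trace-estimate}. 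The remaining terms are then absorbed with a small parameter $a$, exactly as in \eqref{eq:pre-absorption}. This yields $\int_0^T\mathcal H_{\mathrm{lin}}\,\dd t\le C_{\mathrm{lin}}\mathcal H_{\mathrm{lin}}(0)$, with $C_{\mathrm{lin}}$ independent of $T$.

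Monotonicity of $\mathcal H_{\mathrm{lin}}$ then gives $\mathcal H_{\mathrm{lin}}(T)\le C_{\mathrm{lin}}\mathcal H_{\mathrm{lin}}(0)/T$. The observability bound follows as in Corollary~\ref{cor:dissipation-observability}, and the exponential decay follows by the time-shift iteration of Corollary~\ref{cor:optimized-exponential} with $\tau=\mathrm eC_{\mathrm{lin}}$. No smallness is required in this iteration, so it applies to every solution.
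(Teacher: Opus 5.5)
Your proposal is correct and follows the paper's proof essentially step by step: the flat multiplier identity with no bulk term (Lemma~\ref{lem:pohozaev} at $\eta=0$ leaves only $\Sigma_0\geq0$), the $r=5/2$ coercivity with the $25/9$ Young splitting giving the same constant $c_0$, and the same remainder bounds, including the localized trace bound obtained from the weighted stress identity with $f=\chi$ at $\eta=0$. The only slips are cosmetic: Lemma~\ref{lem:harmonic-stress} actually gives $\frac12\int_{\T}\chi(V^2-B^2)\,\dd x=-\iint\chi_x\phi_x\phi_y\,\dd y\dd x$ (you dropped the minus sign, which is harmless since you only use the absolute value), and your mean-zero trace constant $2L/\pi$ is a factor $2$ weaker than the paper's direct Fourier bound $L/\pi$, which only changes the value of $C_{\mathrm{lin}}$.
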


\begin{proof}
Since \(|D|\) annihilates constants, the first equation in
\eqref{eq:linear-closed-loop} shows that the mean of \(\eta\) is conserved.
Hence \(\int_{\T}\eta(t,x)\,\dd x=0\) on \([0,T]\).

We first record the coercivity of the elastic term.  For the multiplier
\(\zeta\) in \eqref{eq:zeta-rho}, the flat identity
\eqref{eq:quadratic-r} with \(r=5/2\) gives
\[
  \int_{\T}\eta_{xxxx}\zeta\,\dd x
  =\frac94\|\eta_{xx}\|_{L^2}^2
   -2\int_{\T}m_{xx}\eta_x\eta_{xx}\,\dd x
   -\frac32\int_{\T}m_{xxx}\eta\eta_{xx}\,\dd x.
\]
Periodic integration by parts and Cauchy--Schwarz yield
\[
  \left|-2\int_{\T}m_{xx}\eta_x\eta_{xx}\,\dd x\right|
  =\left|\int_{\T}m_{xxx}\eta_x^2\,\dd x\right|
  \leq M_3\|\eta\|_{L^2}\|\eta_{xx}\|_{L^2},
\]
while
\[
  \left|\frac32\int_{\T}m_{xxx}\eta\eta_{xx}\,\dd x\right|
  \leq\frac32M_3\|\eta\|_{L^2}\|\eta_{xx}\|_{L^2}.
\]
Set
\[
  q=\frac{25\beta M_3^2}{9g}<1
\]
and choose \(\theta\in(q,1)\).  Young's inequality then gives
\[
  \int_{\T}\eta_{xxxx}\zeta\,\dd x
  \geq
  \frac94(1-\theta)\|\eta_{xx}\|_{L^2}^2
  -\frac{25M_3^2}{36\theta}\|\eta\|_{L^2}^2.
\]
Consequently, if
\[
  \mathcal H_{g,\mathrm{lin}}
  =\frac12\int_{\T}\psi|D|\psi\,\dd x
   +\frac g2\int_{\T}\eta^2\,\dd x,
\]
then
\[
  \frac12\mathcal H_{g,\mathrm{lin}}
  +\beta\int_{\T}\eta_{xxxx}\zeta\,\dd x
  \geq c_0\mathcal H_{\mathrm{lin}},
\]
where
\[
  c_0
  :=\min\left\{
    \frac12,
    \frac12\left(1-\frac q\theta\right),
    \frac92(1-\theta)
  \right\}>0.
\]

We next use the flat version of the multiplier identity.  Let \(\phi\) be the
finite-energy harmonic extension of \(\psi\) to
\(\T\times(-\infty,0)\), set
\[
  \widetilde\psi=\psi-\avg{\psi},
  \qquad
  \widetilde P_{\mathrm{ext}}
  =P_{\mathrm{ext}}-\avg{P_{\mathrm{ext}}},
\]
and define
\[
  \Sigma_0(t)
  :=L\int_{-\infty}^{0}\phi_y(t,L,y)^2\,\dd y\geq0.
\]
Repeating the proof of Proposition~\ref{prop:hydro-multiplier} in the flat
half-plane, with \(G(0)=|D|\) and with the quadratic water-wave term absent,
gives the exact identity
\begin{align}\label{eq:linear-multiplier-identity}
 \notag &\frac12\int_0^T\mathcal H_{g,\mathrm{lin}}(t)\,\dd t
  +\beta\int_0^T\int_{\T}\eta_{xxxx}\zeta\,\dd x\dd t+\mathcal{W}+\mathcal{O}\\
  =&\int_0^T\int_{\T}
    \left[\frac52\chi\widetilde\psi+(x-m)\psi_x\right]
    \eta_t\,\dd x\dd t
-\int_0^T\int_{\T}\widetilde P_{\mathrm{ext}}\zeta\,\dd x\dd t
  -\left[\int_{\T}\zeta\widetilde\psi\,\dd x\right]_0^T,
\end{align}
where \[\mathcal{W}=2g\int_0^T\int_{\T}\chi\eta^2\,\dd x\dd t,\quad \mathcal{O}=\int_0^T\Sigma_0(t)\,\dd t.\]
The term involving \(\chi\eta^2\) and the boundary contribution
\(\Sigma_0\) are nonnegative.  By the definition of \(c_0\), the remaining
gravity and elastic terms on the left are bounded below by
\[
  c_0\int_0^T\mathcal H_{\mathrm{lin}}(t)\,\dd t.
\]

It remains to control the right-hand side.  Put
\[
  D(t)=\int_{\T}\chi|\eta_t|^2\,\dd x.
\]
Proposition~\ref{prop:linear-high-order-dissipation} with \(j=0\) gives
\[
  \int_0^T D(t)\,\dd t
  \leq\frac1\lambda\mathcal H_{\mathrm{lin}}(0).
\]
The Fourier representation in the flat half-plane gives
\[
  \|\widetilde\psi\|_{L^2}^2
  \leq\frac{L}{\pi}\int_{\T}\psi|D|\psi\,\dd x
  \leq\frac{2L}{\pi}\mathcal H_{\mathrm{lin}}.
\]
Moreover, the weighted stress identity used in the proof of
Lemma~\ref{lem:trace}, now with \(\eta=0\), yields
\[
  \int_{\T}\chi\psi_x^2\,\dd x
  \leq D(t)
  +2\|\chi_x\|_{L^\infty}\mathcal H_{\mathrm{lin}}(t).
\]
Finally, set
\[
  A_m=\left\|\frac94-\frac32m_x\right\|_{L^\infty}.
\]
From \eqref{eq:zeta-rho}, periodic integration by parts, and
\(2\|\eta_x\|_{L^2}^2\leq
\|\eta\|_{L^2}^2+\|\eta_{xx}\|_{L^2}^2\),
\begin{align*}
  \|\zeta\|_{L^2}^2
  &\leq
  2\|m\|_{L^\infty}^2\|\eta_x\|_{L^2}^2
  +2A_m^2\|\eta\|_{L^2}^2
  \\
  &\leq
  \left[
    \frac{2\|m\|_{L^\infty}^2}{\beta}
    +\frac{2(\|m\|_{L^\infty}^2+2A_m^2)}{g}
  \right]\mathcal H_{\mathrm{lin}}.
\end{align*}
Since
\(P_{\mathrm{ext}}=\lambda\chi\eta_t\), subtraction of the mean does not
increase the \(L^2\)-norm and hence
\[
  \|\widetilde P_{\mathrm{ext}}\|_{L^2}
  \leq
  \lambda\|\chi\|_{L^\infty}^{\frac12}D(t)^{\frac12}.
\]
Together with \(|x-m|\leq L\chi\), these estimates imply, with a constant
\(C_1\) depending only on \(g,\beta,L,\lambda\) and \(m\),
\[
  \left|
  \int_{\T}
    \left[\frac52\chi\widetilde\psi+(x-m)\psi_x\right]
    \eta_t\,\dd x
  \right|
  +\left|\int_{\T}\widetilde P_{\mathrm{ext}}\zeta\,\dd x\right|
  \leq
  C_1D(t)+C_1\bigl(\mathcal H_{\mathrm{lin}}(t)D(t)\bigr)^{\frac12}.
\]
The endpoint term satisfies
\[
  \left|\int_{\T}\zeta\widetilde\psi\,\dd x\right|
  \leq C_1\mathcal H_{\mathrm{lin}}(t).
\]
Since \(\mathcal H_{\mathrm{lin}}\) is nonincreasing, the two endpoint
values are bounded by \(2C_1\mathcal H_{\mathrm{lin}}(0)\).  Applying
Young's inequality to the mixed term and using the bound on
\(\int_0^T D(t)\,\dd t\), we obtain
\[
  \left|\text{right-hand side of \eqref{eq:linear-multiplier-identity}}\right|
  \leq
  \frac{c_0}{2}
  \int_0^T\mathcal H_{\mathrm{lin}}(t)\,\dd t
  +C_2\mathcal H_{\mathrm{lin}}(0),
\]
where \(C_2=C_2(g,\beta,L,\lambda,m)\).  Absorbing the first term gives
\[
  \int_0^T\mathcal H_{\mathrm{lin}}(t)\,\dd t
  \leq\frac{2C_2}{c_0}\mathcal H_{\mathrm{lin}}(0).
\]
Thus we may take \(C_{\mathrm{lin}}=2C_2/c_0\).  Since the energy is
nonincreasing,
\[
  T\mathcal H_{\mathrm{lin}}(T)
  \leq\int_0^T\mathcal H_{\mathrm{lin}}(t)\,\dd t,
\]
which proves the one-step decay estimate.

Integrating \eqref{eq:linear-time-energy-identity} for \(j=0\) gives
\[
  \mathcal H_{\mathrm{lin}}(0)-\mathcal H_{\mathrm{lin}}(T)
  =\lambda\int_0^T D(t)\,\dd t.
\]
Combining this equality with the one-step estimate proves the stated
localized observability inequality when \(T>C_{\mathrm{lin}}\).  Finally,
apply the one-step estimate on successive intervals of length
\(\tau>C_{\mathrm{lin}}\).  The same iteration as in the proof of
Corollary~\ref{cor:optimized-exponential} yields
\[
  \mathcal H_{\mathrm{lin}}(t)
  \leq
  \frac{\tau}{C_{\mathrm{lin}}}
  \exp\!\left(
    -\frac{\log(\tau/C_{\mathrm{lin}})}{\tau}\,t
  \right)
  \mathcal H_{\mathrm{lin}}(0).
\]
Choosing \(\tau=\mathrm e C_{\mathrm{lin}}\) gives the announced exponential
decay.
\end{proof}
\section*{Acknowledgments}
This work was supported by the National Natural Science Foundation of China (Grant No.~12561101).

\section*{Data availability}
No data were generated or analyzed in this study.

\section*{Conflict of interest}
The authors declare no competing interests.

\end{document}